\theoremstyle{plain}
\newtheorem{theo}{Theorem}[section]
\newtheorem{lemma}[theo]{Lemma}
\newtheorem{prop}[theo]{Proposition}
\newtheorem{coro}[theo]{Corollary}
\theoremstyle{definition}
\newtheorem{defi}[theo]{Definition}
\newtheorem{rema}[theo]{Remark}
\newtheorem{exam}[theo]{Example}
\newenvironment{proof1}{\medskip\par\noindent{\bf Proof.}}{\hfill $\Box$
\medskip\par}
\def\C{\mathbb{C}}
\def\N{\mathbb{N}}
\def\R{\mathbb{R}}
\def\a{\alpha}
\def\b{\beta}
\def\o{\omega}
\def\ro{\rho}
\def\ga{\gamma}
\def\eps{\varepsilon}
\def\bM{\mathbb{M}}
\def\M{\mathbb{M}}
\def\L{\mathbb{L}}
\def\bm{\boldsymbol{m}}
\def\m{\boldsymbol{m}}
\def\en{\subseteq}
\def\parn{\par\noindent}
\newcommand{\pprec}{\prec\mathrel{\mkern-5mu}\prec}
\definecolor{azulosc}{rgb}{0.2,0.1,0.7}
\definecolor{naranjauva}{RGB}{237,110,0}
\definecolor{verdecla}{rgb}{0.1,0.8,0.2} 
\definecolor{verdetlp}{RGB}{33,120,68}
\definecolor{granate}{rgb}{0.6,0,0.3} 
\definecolor{azulclarouva}{RGB}{130,115,186}
\definecolor{moradouva}{RGB}{186,31,181}
\definecolor{moradopod}{RGB}{97,45,98} 
\definecolor{moradomuyclaropod}{RGB}{160,129,161} 
\definecolor{moradoclaropod}{RGB}{149,78,153} 
\definecolor{verdeclaropod}{RGB}{151,194,184} 
\definecolor{verdeuva}{RGB}{122,154,1}
\definecolor{verdeosc}{RGB}{46, 139, 87} 
\definecolor{similar1}{RGB}{88,6,124}
\definecolor{similar2}{RGB}{186,14,0}
\definecolor{opuesto1}{RGB}{0,142,25}
\definecolor{opuesto2}{RGB}{178,185,0}
\definecolor{optotal}{RGB}{109,172,0}
\definecolor{amarillouva}{RGB}{172,104,43}     
\definecolor{rosapalo}{rgb}{0.83,0.48,0.38}   
\definecolor{violeta}{rgb}{0.7,0.5,0.8}       
\definecolor{rojo}{RGB}{219,0,0}                
\DeclareRobustCommand\widecheck[1]{{\mathpalette\@widecheck{#1}}}
\def\@widecheck#1#2{%
    \setbox\z@\hbox{\m@th$#1#2$}%
    \setbox\tw@\hbox{\m@th$#1%
       \widehat{%
          \vrule\@width\z@\@height\ht\z@
          \vrule\@height\z@\@width\wd\z@}$}%
    \dp\tw@-\ht\z@
    \@tempdima\ht\z@ \advance\@tempdima2\ht\tw@ \divide\@tempdima\thr@@
    \setbox\tw@\hbox{%
       \raise\@tempdima\hbox{\scalebox{1}[-1]{\lower\@tempdima\box
\tw@}}}%
    {\ooalign{\box\tw@ \cr \box\z@}}}
\begin{document}

\title{Injectivity and surjectivity of the asymptotic
Borel map in Carleman ultraholomorphic classes}
\author{Javier Jim\'enez-Garrido, Javier Sanz and Gerhard Schindl}
\date{\today}

\maketitle

{ \small
\begin{center}
{\bf Abstract}
\end{center}

We study the injectivity and surjectivity of the Borel map in three instances: in Roumieu-Carleman ultraholomorphic classes in unbounded sectors of the Riemann surface of the logarithm, and in classes of functions admitting, uniform or nonuniform, asymptotic expansion at the corresponding vertex. These classes are defined in terms of a log-convex sequence $\M$ of positive real numbers. Injectivity had been solved in two of these cases by S. Mandelbrojt and B. Rodr{\'\i}guez-Salinas, respectively, and we completely solve the third one by means of the theory of proximate orders. A growth index $\o(\M)$ turns out to put apart the values of the opening of the sector for which injectivity holds or not. In the case of surjectivity, only some partial results were available by J. Schmets and M. Valdivia and by V. Thilliez, and this last author introduced an index $\ga(\M)$ (generally different from $\o(\M)$) for this problem, whose optimality was not established except for the Gevrey case. We considerably extend here their results, proving that $\ga(\M)$ is indeed optimal in some standard situations (for example, as far as $\M$ is strongly regular) and puts apart the values of the opening of the sector for which surjectivity holds or not.\par
\bigskip
\noindent Key words: Carleman ultraholomorphic classes, asymptotic expansions, proximate order, Borel--Ritt--Gevrey theorem, Watson's lemma, Laplace transform.
\par
\medskip
\noindent 2010 MSC: Primary 30D60; secondary 30E05, 47A57, 34E05.
}

\bigskip

\section{Introduction}

In 1886,  H. Poincar\'e boosted the mathematical interest in formal (usually divergent) power series by introducing  the notion of asymptotic expansion, which is a kind of
Taylor expansion which provides successive approximations: a complex function $f$, holomorphic on a sector
$S=\{z\in\C: 0<|z|<r,\ a<\arg(z)<b\}$, admits the complex formal power series $\widehat{f}=\sum_{p=0}^\infty a_p z^p$ as its (uniform) asymptotic expansion at the origin if for every $p\in\N_0=\N\cup\{0\}$ there exists a positive constant $C_p$ such that for every $z\in S$ one has
\begin{equation}
\big|f(z)-\sum_{n=0}^{p-1} a_n z^n\big|\leq C_p|z|^p,
\label{eq.Poincare.asymptotics.intro}
\end{equation}
and we write $f\in\widetilde{\mathcal{A}} (S)$. In this context it is natural to consider the asymptotic Borel map
$\widetilde{\mathcal{B}}: \widetilde{\mathcal{A}} (S)\to \C[[z]]$ sending a function $f$ into its  asymptotic expansion  $\widehat{f}$.\par

In 1916,  J. F.~Ritt showed that this map is surjective for any sector $S$, while it is never injective (given a sector bisected by direction 0, the exponential $\exp(-z^{-\alpha})$, $\alpha>0$, is a nontrivial flat, i.e., asymptotically null, function for a suitable choice of $\alpha$).
Hence, given a formal power series $\widehat{f}$ and a sector $S$, it is in general hopeless to try to assign a well-defined sum to it, in the sense that there is not a unique holomorphic function in $S$ asymptotic to $\widehat{f}$.\par
Crucial and original advances were produced in this sense during the 1970's with the works of  J. P. Ramis~\cite{Ramis1,Ramis2}. He noted that, although the formal power series solutions to differential equations are frequently divergent, under fairly general conditions the rate of growth of their coefficients is not arbitrary. Indeed, a remarkable result of E. Maillet~\cite{Maillet} in 1903 states that for any solution $\widehat{f}=\sum_{p\ge 0}a_pz^p$ of an analytic differential equation there will exist $C,A,k>0$ such that $|a_p|\le CA^p(p!)^{1/k}$ for every $p\in\N_0$.
Inspired by this fact, Ramis introduces and
studies the notion of $k-$summability, that rests on classical results by G. N. Watson and R. Nevannlina and generalizes Borel's summability method. His developments are based on a modification of Poincar\'e's asymptotic expansion where the growth of the constant $C_p$ in~\eqref{eq.Poincare.asymptotics.intro} is made explicit in the form
$C_p=C A^p (p!)^{1/k}$ for some $A,C>0$, what entails the same kind of estimates for the coefficients $a_p$ in $\widehat{f}$. The sequence $\M_{1/k}=(p!^{1/k})_{p\in\N_0}$ is the Gevrey sequence of order $1/k$, $f$ is said to be $1/k-$Gevrey asymptotic to $\widehat{f}$ (denoted by $f\in\widetilde{\mathcal{A}}_{\M_{1/k}}(S)$), and $\widehat{f}$, because of the estimates satisfied by its coefficients, is said to be a $1/k-$Gevrey series ($\widehat{f}\in\C[[z]]_{\M_{1/k}}$). The Borel map, defined in this case from $\widetilde{\mathcal{A}}_{\M_{1/k}}(S)$ to $\C[[z]]_{\M_{1/k}}$, is surjective if and only if the opening of the sector $S$ is smaller than or equal to $\pi/k$ (Borel-Ritt-Gevrey Theorem), and it is injective if and only if the opening is greater than $\pi/k$ (Watson's Lemma). So, in this well-known Gevrey case it turns out that $(0,\infty)$ splits as the disjoint union of the intervals of surjectivity and injectivity.

However, motivated by the study of summability of formal power series solutions to different kind of equations, it is interesting to deal with $\M-$asymptotic expansions, whose estimates in \eqref{eq.Poincare.asymptotics.intro} correspond to a constant $C_p=C A^p M_p$ for some $A,C>0$  and for a suitable sequence $\M=(M_p)_{p\in\N_{0}}$ of positive real numbers.
Such estimates then hold also for the coefficients of the power series involved in \eqref{eq.Poincare.asymptotics.intro}, and the corresponding class of formal power series is denoted by  $\C[[z]]_{\M}$. The main aim of this paper is to widen the knowledge of injectivity and surjectivity results for the Borel map in this general context.

One should emphasize that one may consider three closely related, so-called ultraholomorphic classes of  functions in a sector $S$ of the Riemann surface of the logarithm: the class $\widetilde{\mathcal{A}}^u_\M (S)$ of holomorphic functions with uniform asymptotic expansion in $S$;
the class $\widetilde{\mathcal{A}}_\M (S)$ consisting of holomorphic functions with nonuniform asymptotic expansion in $S$, meaning that \eqref{eq.Poincare.asymptotics.intro} holds for $C_p(T)=C_T A_T^p M_p$ on every proper bounded subsector $T$ of $S$ (instead of uniformly on $S$), where $C_T, A_T>0$ depend on the subsector; and, finally, the class $\mathcal{A}_\M (S)$ of functions
for which there exists $A=A(f)>0$ such that
$$\sup_{z\in S,\, p\in\N_0 }\frac{|f^{(p)}(z)|}{A^p p! M_p}  <\infty.$$
In order to guarantee some stability properties for these classes, and to avoid trivial situations, we will always assume that
$\M$ is a weight sequence, that is, a logarithmically convex sequence such that its sequence of quotients of consecutive terms, $\m=(m_p=M_{p+1}/M_p)_{p\in\N_0}$, tends to infinity. Moreover, since the problems under study do not depend on the bisecting direction of the sector, we will mainly work with sectors $S_\ga$ bisected by the direction $d=0$ and with opening $\pi\ga$.

Injectivity and surjectivity of the Borel map for the corresponding ultradifferentiable classes, consisting of smooth functions on intervals of the real line subject to uniform estimates for their derivatives, have been fully characterized: The Denjoy-Carleman theorem (see, for example, \cite{Hormander1990}) characterizes injectivity or, in other words, the quasianalyticity of the corresponding classes, while the results of H.-J. Petzsche~\cite{Pet} prove that the surjectivity amounts to a so-called `strong nonquasianalyticity' condition for $\M$. As the terminology suggests, the Borel map in this case is never bijective.

Regarding the ultraholomorphic framework, the injectivity for the classes  $\widetilde{\mathcal{A}}^{u}_{\M}(S)$ and $\mathcal{A}_{\M}(S)$  was completely solved, respectively, by S. Mandelbrojt~\cite{Mandelbrojt} and B. Rodr{\'i}guez-Salinas~\cite{Salinas} in the 1950's (see Section~\ref{sect.Inject.Borel.map}), but the rest of the information was far from being complete.\par

The results of S. Mandelbrojt and B. Rodr{\'i}guez-Salinas suggested the introduction of a growth index $\o(\M)$, initially given by the second author~\cite{SanzFlatProxOrder} for strongly regular sequences
(i.e. those logarithmically convex, strongly nonquasianalytic and of moderate growth, see Definition~\ref{defiStrongRegularSequen}), which puts apart the openings of quasianalyticity from those of nonquasianalyticity for the three ultraholomorphic classes considered.
Nevertheless, in general it remained open the question about the quasianalyticity of the class $\widetilde{\mathcal{A}}_{\M}(S_{\o(\M)})$, that is, for sectors of optimal opening $\pi\o(\M)$.\par

A first and partial solution to this situation relies on the concept of proximate order, available since the 1920s and extremely useful in the theory of growth of entire functions, and on some related results of L. S. Maergoiz~\cite{Maergoiz} in 2001: if we define the auxiliary functions $\o_{\M}(t)=\sup_{p\in\N_0} \log(t^p/M_p)$ and $d_{\M}(t):=\log(\o_{\M}(t))/\log(t)$
associated with $\M$, it was shown in~\cite{SanzFlatProxOrder} that, whenever $\M$ is strongly regular and $d_{\M}(t)$ is a nonzero proximate order, one is able to produce nontrivial flat functions in $S_{\o(\M)}$, and a generalized version of Watson's Lemma is available.
Indeed, 
it was observed that, for the previous arguments to work, $d_{\M}$ need not be a nonzero proximate order, but rather be close enough to one such order (we say 
$\M$ admits a nonzero proximate order, see Theorem~\ref{theoAdmissProxOrder}). It is then natural to ask oneself whether every strongly regular sequence admits a nonzero proximate order, and the authors found a negative answer in~\cite[Examples~4.16~and~4.18]{JimenezSanzSchindlLCSNPO}. So, the quasianalyticity of $\widetilde{\mathcal{A}}_{\M}(S_{\o(\M)})$ remained open in some cases.

As said before, for the surjectivity only very partial information was available. After the aforementioned Borel-Ritt-Gevrey Theorem in 1978, and by applying techniques from the ultradifferentiable setting, V.~Thilliez~\cite{Thilliez1} proved in 1995 that for the Gevrey class $\mathcal{A}_{\M_\a}(S_{\gamma})$ one has surjectivity if and only if $\ga<\a$, and gave a linear and continuous extension from $\C[[z]]_{\M_\a,A}$ to $\mathcal{A}_{\M_\a,dA}(S_\ga)$ for every $A>0$, where $d>0$ depends only on $\a$ and $\ga$.  In 2000 J. Schmets and M. Valdivia~\cite{SchmetsValdivia}, by working with some nonclassical ultradifferentiable classes $\mathcal{E}_{r,\M}$, $\mathcal{N}_{r,\M}$ and $\mathcal{L}_{r,\M}$ (see Subsection~\ref{subsectSurjGeneralWeightSequences} for more details), obtained some consequences of surjectivity of the asymptotic Borel map for a general weight sequence $\M$ in the Roumieu and Beurling cases and, in particular, characterized the existence of linear and continuous global extension from $\C[[z]]_{\M}$ to $\mathcal{A}_{\M}(S)$ for any sector $S$ (which is much more demanding than surjectivity) as long as the weight sequence satisfies the property of derivation closedness, namely there exists $A>0$ such that $M_{p+1}\le A^{p+1}M_p$ for every $p\in\N_0$.
In 2003, V. Thilliez~\cite{thilliez} improved their results for strongly regular sequences.
He introduced the index $\ga(\M)$, which for such sequences is always a positive real, and showed that for $0<\ga<\ga(\M)$, $\widetilde{\mathcal{B}}: \mathcal{A}_{\M} (S_\ga)  \to \C[[z]]_{\M}$ is surjective and not injective, and again obtained right inverses for the Borel map with a control on the type appearing in the estimates, see Theorem~\ref{th.ThilliezSurjectivity}.
This theorem was reproved by A. Lastra, S. Malek and the second author~\cite{lastramaleksanz} using the technique of the truncated Laplace transform with a suitable kernel. Finally, in~\cite[Theorem\ 6.1]{SanzFlatProxOrder} the second author generalized the Borel--Ritt--Gevrey theorem for strongly regular sequences such that the auxiliary function $d_{\M}$ is a proximate order (or, less demanding, sequences admitting a nonzero proximate order): the Borel map $\widetilde{\mathcal{B}}: \widetilde{\mathcal{A}}_{\M} (S_\ga)  \to \C[[z]]_{\M}$ is surjective if and only if $0<\ga\le \o(\M)$.

So, some important issues arose:
\begin{itemize}
\item[(i)] First, for the sequences appearing in applications the indices $\ga(\M)$ and $\o(\M)$ always coincide, but only $\ga(\M)\le\o(\M)$ seemed to hold in general. The authors could prove in~\cite[Example~4.18]{JimenezSanzSchindlLCSNPO} that these values may be different even for strongly regular sequences.
\item[(ii)] The admissibility of a nonzero proximate order, which happens to hold for most sequences  appearing in applications, has some important consequences for a weight sequence $\M$: It will be strongly regular and $\ga(\M)=\o(\M)$ (see~\cite[Remark~4.15]{JimenezSanzSchindlLCSNPO}, also~\cite{PhDJimenez}). So, these two different indices were hidden as being just one. Moreover, it is not strange that both indices have appeared in the different statements of Thilliez and the second author related to surjectivity.
\item[(iii)] Since the value of $\o(\M)$ has been shown to be crucial for injectivity, one should decide whether $\ga(\M)$ is really putting apart the values of surjectivity from those of nonsurjectivity, and so Thilliez's result is optimal in this sense.
\end{itemize}

After Section~\ref{sectPrelimin}, dedicated to the necessary preliminaries, Section~\ref{sect.Inject.Borel.map} is devoted to solving the injectivity problem. Our first important result in this paper, Theorem~\ref{teorconstrfuncplana}, will show that even the aforementioned assumption of admissibility for $\M$ may be skipped thanks again to the theory of proximate orders and regular variation, concluding that the classes $\widetilde{\mathcal{A}}_{\M}(S_{\o(\M)})$ are always nonquasianalytic.
Moreover, with the help of the quasianalyticity results, we show in Theorem~\ref{theoNotBijectivity} that the Borel map is never bijective, as it ocurred for ultradifferentiable classes.

Our results regarding surjectivity are gathered in Section~\ref{sectSurjectivity}. We start by
showing (Lemma~\ref{lemmaSurjectivityImpliessnq}) that for arbitrary weight sequences, surjectivity for any opening requires $\ga(\M)>0$ or, in other words, $\M$ has to be strongly nonquasianalytic. Without any other assumption on $\M$, no result stating the surjectivity of the Borel map is available, but we may give some information on the maximal possible opening for which surjectivity could occur by resting on results by Schmets and Valdivia~\cite{SchmetsValdivia} and on the use of suitable Borel-like integral transforms, see Theorems~\ref{th.SurjectivityNonUniformAsymp} and~\ref{th.SurjectivityUniformAsymp.plus.dc} (in the second case, by imposing also (dc), see Table~\ref{tableSurjectivity.lc.snq.dc}).

Finally, in Subsection~\ref{subsectSurjStronglyRegSeq} we concentrate in the case of strongly regular sequences $\M$ and prove, by some ramification arguments, that the results of Thilliez are optimal, in the sense that the index $\ga(\M)$ is really the critical value putting apart the openings of surjectivity from those of nonsurjectivity (although, in some situations, the limiting case $S_{\ga(\M)}$ is still an open problem, see Table~\ref{tableSurjectivityStrongReg}). In Remark~\ref{rema.interval.without.inject.surject} we comment on the implications of the fact that $\ga(\M)<\o(\M)$ concerning the Borel map $\widetilde{\mathcal{B}}$.

We conclude analyzing if the value $\ga(\M)$ belongs to these intervals or not in case the sequence is even better behaved and satisfies, for example, $\ga(\M)=\o(\M)$, or even the stronger condition of admitting a nonzero proximate order (see Table~\ref{tableSurjectAdmitProxOrder}).

\section{Preliminaries}\label{sectPrelimin}
\subsection{Notation}
We set $\N:=\{1,2,...\}$, $\N_{0}:=\N\cup\{0\}$.
$\mathcal{R}$ stands for the Riemann surface of the logarithm, and
$\C[[z]]$ is the space of formal power series in $z$ with complex coefficients.\par\noindent
For $\gamma>0$, we consider unbounded sectors bisected by direction 0,
$$S_{\gamma}:=\{z\in\mathcal{R}:|\hbox{arg}(z)|<\frac{\gamma\,\pi}{2}\}$$
or, in general, bounded or unbounded sectors
$$S(d,\alpha,r):=\{z\in\mathcal{R}:|\hbox{arg}(z)-d|<\frac{\alpha\,\pi}{2},\ |z|<r\},\quad
S(d,\alpha):=\{z\in\mathcal{R}:|\hbox{arg}(z)-d|<\frac{\alpha\,\pi}{2}\}$$
with bisecting direction $d\in\R$, opening $\alpha\,\pi$ and (in the first case) radius $r\in(0,\infty)$.\par\noindent
A sectorial region $G(d,\a)$ with bisecting direction $d\in\R$ and opening $\alpha\,\pi$ will be a connected open set in $\mathcal{R}$ such that $G(d,\a)\subset S(d,\a)$, and
for every $\beta\in(0,\a)$ there exists $\rho=\rho(\beta)>0$ with $S(d,\beta,\rho)\subset G(d,\a)$. We simply write $G_{\a}$ for any sectorial region bisected by direction $d=0$ and opening $\alpha\,\pi$. In particular, sectors are sectorial regions.\par\noindent
A bounded (respectively, unbounded) sector $T$ is said to be a \emph{proper subsector} of a sectorial region $G$ (resp. of an unbounded sector $S$) , and we write $T\ll G$ (resp. $T\pprec S$), if $\overline{T}\subset G$ (where the closure of $T$ is taken in $\mathcal{R}$, and so the vertex of the sector is not under consideration).

\subsection{Sequences and associated functions}\label{subsectstrregseq}

In what follows, $\bM=(M_p)_{p\in\N_0}$ will always stand for a sequence of
positive real numbers, and we will always assume that $M_0=1$. The following properties for such a sequence will play a role in this paper.

\begin{defi}\label{defiStrongRegularSequen} We say that:
\begin{itemize}
\item[(i)]  $\M$ is \emph{logarithmically convex} (for short, (lc)) if
$$M_{p}^{2}\le M_{p-1}M_{p+1},\qquad p\in\N.$$%
\item[(ii)] $\M$ is \emph{stable under differential operators} or satisfies the \emph{derivation closedness condition} (briefly, (dc)) if there exists $D>0$ such that $$M_{p+1}\leq D^{p+1} M_{p},  \qquad p\in\N_{0}. $$
\item[(iii)]  $\M$ is of, or has, \emph{moderate growth} (briefly, (mg)) whenever there exists $A>0$ such that
$$M_{p+q}\le A^{p+q}M_{p}M_{q},\qquad p,q\in\N_0.$$
\item[(iv)] $\M$ is \emph{nonquasianalytic} (for short, (nq)) if $$\sum^\infty_{k= 0} \frac{M_k}{(k+1)M_{k+1}}<\infty.$$
\item[(v)]  $\M$ satisfies the \emph{strong nonquasianalyticity condition} (for short, (snq)) if there exists $B>0$ such that
$$
\sum^\infty_{q= p}\frac{M_{q}}{(q+1)M_{q+1}}\le B\frac{M_{p}}{M_{p+1}},\qquad p\in\N_0.$$
\end{itemize}
According to V.~Thilliez~\cite{thilliez}, if $\M$ is (lc), has (mg) and satisfies (snq), we say that $\M$ is \emph{strongly regular}.
\end{defi}

Obviously, (mg) implies (dc), and (snq) implies (nq).

\begin{defi}
For a sequence $\M$ we define {\it the sequence of quotients}  $\m=(m_p)_{p\in\N_0}$ by
$$m_p:=\frac{M_{p+1}}{M_p} \qquad p\in \N_0.$$%
\end{defi}

\begin{rema}\label{remaConseqLC}
The sequence of quotients $\bm$ is nondecreasing if and only if $\M$ is (lc). In this case, it is well-known that $(M_p)^{1/p}\leq m_{p-1}$ for every $p\in\N$, the sequence $((M_p)^{1/p})_{p\in\N}$ is nondecreasing, and $\lim_{p\to\infty} (M_p)^{1/p}= \infty$ if and only if $\lim_{p\to\infty} m_p= \infty$.

We will restrict from now on to
(lc) sequences $\M$ such that $\displaystyle\lim_{p\to\infty} m_p =\infty$, which will be called \emph{weight sequences}
(the last assumption is included in order to avoid trivial situations, see for example Remark~\ref{remaTrivialWhenmnotInfty}). It is immediate that if $\M$ is (lc) and (snq), then $\M$ is a weight sequence.
\end{rema}

\begin{exam}\label{exampleSequences}
We mention some interesting examples. In particular, those in (i) and (iii) appear in the applications of summability theory to the study of formal power series solutions for different kinds of equations.
\begin{itemize}
\item[(i)] The sequences $\M_{\a,\b}:=\big(p!^{\a}\prod_{m=0}^p\log^{\b}(e+m)\big)_{p\in\N_0}$, where $\a>0$ and $\b\in\R$, are strongly regular (in case $\b<0$, the first terms of the sequence have to be suitably modified in order to ensure (lc)). In case $\b=0$, we have the best known example of strongly regular sequence, $\M_{\a}:=\M_{\a,0}=(p!^{\a})_{p\in\N_{0}}$, called the \emph{Gevrey sequence of order $\a$}.
\item[(ii)] The sequence $\M_{0,\b}:=(\prod_{m=0}^p\log^{\b}(e+m))_{p\in\N_0}$, with $\b>0$, is (lc), (mg) and $\bm$ tends to infinity, but (snq) is not satisfied.
\item[(iii)] For $q>1$, $\M_q:=(q^{p^2})_{p\in\N_0}$ is (lc) and (snq), but not (mg).
\end{itemize}
\end{exam}

For weight sequences, the auxiliary functions $\o_{\M}(t)$ and $h_{\M}(t)$, already appearing in the works of S.~Mandelbrojt~\cite{Mandelbrojt}, H.~Komatsu~\cite{komatsu} or V. Thilliez~\cite{thilliez},  play an important role. The map $h_{\bM}:[0,\infty)\to\R$ is defined by
\begin{equation*}
h_{\bM}(t):=\inf_{p\in\N_{0}}M_{p}t^p,\quad t>0;\qquad h_{\bM}(0)=0,
\end{equation*}
and it turns out to be a nondecreasing continuous map in $[0,\infty)$ onto $[0,1]$. In fact
$$
h_{\bM}(t)= \begin{cases}
  t^{p}M_{p}&\quad\textrm{if }t\in\big[\frac{1}{m_{p}},\frac{1}{m_{p-1}}\big),\ p=1,2,\ldots,\\
  1&\quad\textrm{if }t\ge 1/m_{0}.
\end{cases}
$$
One may also consider the function
\begin{equation*}
\o_{\M}(t):=\sup_{p\in\N_{0}}\log\big(\frac{t^p}{M_{p}}\big)= -\log\big(h_{\bM}(1/t)\big),\quad t>0;\qquad \o_{\M}(0)=0,
\end{equation*}
which is a nondecreasing continuous map in $[0,\infty)$ with $\lim_{t\to\infty}\o_{\M}(t)=\infty$.
Indeed,
\begin{equation*}
\o_{\M}(t)=\begin{cases} p\log t -\log(M_{p}) &\quad\textrm{if }t\in [m_{p-1},m_{p}),\ p=1,2,\ldots,\\
0 &\quad\textrm{if } t\in [0,m_{0}).
\end{cases}
\end{equation*}
\parn

\begin{defi}[\cite{Pet},~\cite{chaucho}]
Two sequences $\bM=(M_{p})_{p\in\N_0}$ and $\bM'=(M'_{p})_{p\in\N_0}$ of positive real numbers are said to be \emph{equivalent}, and we write $\M\approx\M'$, if there exist positive constants $L,H$ such that
$$L^pM_p\le M'_p\le H^pM_p,\qquad p\in\N_0.$$
\end{defi}
In this case, it is straightforward to check that
\begin{equation*}
h_{\bM}(Lt)\le h_{\bM'}(t)\le h_{\bM}(Ht),\qquad t\ge 0.
\end{equation*}

\subsection{Asymptotic expansions, ultraholomorphic classes and the asymptotic Borel map}\label{subsectCarlemanclasses}

In this paragraph $G$ is a sectorial region and $\M$ a  sequence. We start recalling the concept of asymptotic expansion.

We say a holomorphic function $f$ in $G$ admits the formal power series $\widehat{f}=\sum_{p=0}^{\infty}a_{p}z^{p}\in\C[[z]]$ as its $\bM-$\emph{asymptotic expansion} in $G$ (when the variable tends to 0) if for every $T\ll G$ there exist $C_T,A_T>0$ such that for every $p\in\N_0$, one has
\begin{equation*}\Big|f(z)-\sum_{n=0}^{p-1}a_nz^n \Big|\le C_TA_T^pM_{p}|z|^p,\qquad z\in T.
\end{equation*}
We will write $f\sim_{\bM}\widehat{f}$ in $G$. $\widetilde{\mathcal{A}}_{\M}(G)$ stands for the space of functions admitting $\bM-$asymptotic expansion in $G$.\par

We say a holomorphic function $f:G\to\C$ admits $\widehat{f}$ as its \emph{uniform $\bM-$asymptotic expansion in $G$ (of type $1/A$ for some $A>0$)} if there exists $C>0$ such that for every $p\in\N_0$, one has
\begin{equation}\Big|f(z)-\sum_{n=0}^{p-1}a_nz^n \Big|\le CA^pM_{p}|z|^p,\qquad z\in G.\label{desarasintunifo}
\end{equation}
In this case we write $f\sim_{\bM}^u\widehat{f}$ in $G$, and $\widetilde{\mathcal{A}}^u_{\M}(G)$ denotes the space of functions admitting uniform $\bM-$asymptotic expansion in $G$. Note that, taking $p=0$ in~\eqref{desarasintunifo}, we deduce that every function in $\widetilde{\mathcal{A}}^u_{\M}(G)$ is a bounded function.

Finally, we define for every $A>0$ the class $\mathcal{A}_{\M,A}(G)$ consisting of the functions holomorphic in $G$ such that
$$
\left\|f\right\|_{\M,A}:=\sup_{z\in G,n\in\N_{0}}\frac{|f^{(n)}(z)|}{A^{n}n!M_{n}}<\infty.
$$
($\mathcal{A}_{\M,A}(G),\left\| \ \right\| _{\M,A}$) is a Banach space, and $\mathcal{A}_{\M}(G):=\cup_{A>0}\mathcal{A}_{\M,A}(G)$ is called a \emph{Roumieu-Carleman ultraholomorphic class} in the sectorial region $G$.

\begin{rema}\label{rema.alg.prop.ultra.class} For any sequence $\M$, the classes $\mathcal{A}_{\M}(G)$, $\widetilde{\mathcal{A}}^u_{\M}(G)$ and $\widetilde{\mathcal{A}}_{\M}(G)$ are complex vector spaces. If $\M$ is (lc), they are algebras and if $\M$ is (dc), they are stable under taking derivatives.\par\noindent
Moreover, if $\M\approx\L$ the corresponding classes coincide.
\end{rema}

For a sector $S$, since the derivatives of $f\in\mathcal{A}_{\bM,A}(S)$ are Lipschitzian, for every $n\in\N_{0}$ one may define
\begin{equation}\label{eq.deriv.at.0.def}
f^{(p)}(0):=\lim_{z\in S,z\to0 }f^{(p)}(z)\in\C.
\end{equation}

As a consequence of Taylor's formula and Cauchy's integral formula for the derivatives, there is a close relation between Roumieu-Carleman ultraholomorphic classes and the concept of asymptotic expansion (the proof may be easily adapted from~\cite{balserutx,galindosanz}).

\begin{prop}\label{propcotaderidesaasin}
Let $\M$ be a sequence, $S$ a sector and $G$ a sectorial region. Then,
\begin{enumerate}[(i)]
\item If $f\in\mathcal{A}_{\M,A}(S)$ then $f$ admits $\widehat{f}:=\sum_{p\in\N_0}\frac{1}{p!}f^{(p)}(0)z^p$ as its uniform $\bM-$asymptotic expansion in $S$ of type $1/A$, where $(f^{(p)}(0))_{p\in\N_0}$ is given by \eqref{eq.deriv.at.0.def}. Consequently, we have that
$$\mathcal{A}_{\M}(S)\subseteq \widetilde{\mathcal{A}}^u_{\M}(S) \subseteq  \widetilde{\mathcal{A}}_{\M}(S).$$
\item $f\in\widetilde{\mathcal{A}}_{\M}(G)$ if and only if for every $T\ll G$ there exists $A_T>0$ such that $f|_T\in \mathcal{A}_{\M,A_T}(T)$.
In case any of the previous holds and $f\sim_\M\sum^\infty_{p=0} a_p z^p$, then for every $T\ll G$ and every $p\in\N_0$ one has
\begin{equation}\label{equaCoeffAsympExpanLimitDeriv}
a_p=\lim_{ \genfrac{}{}{0pt}{}{z\to0}{z\in T}} \frac{f^{(p)}(z)}{p!},
\end{equation}
and we can set ${f^{(p)}(0)}:=p!a_p$.
\item If $S$ is unbounded and $T\pprec S$, then there exists a constant $c=c(T,S)>0$ such that the restriction to $T$, $f|_T$, of functions $f$ defined on $S$ and admitting uniform $\bM-$asymptotic expansion in $S$ of type $1/A>0$, belongs to $\mathcal{A}_{\bM,cA}(T)$.
\item If $f\in\widetilde{\mathcal{A}}_{\M}(G)$, its $\M-$asymptotic expansion $\widehat{f}$ is unique.
\end{enumerate}
\end{prop}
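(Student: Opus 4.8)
The plan is to deduce all four items from two classical tools: Taylor's formula with integral remainder (for~(i)) and Cauchy's integral formula for the derivatives (for~(ii) and~(iii)), with~(iv) a routine induction. Throughout I use, as recalled just before the statement, that for $f\in\mathcal{A}_{\M,A}(S)$ each $f^{(p)}$ is bounded on $S$ by $\|f\|_{\M,A}A^p p!\,M_p$ and Lipschitzian, so the limits $f^{(p)}(0)$ in~\eqref{eq.deriv.at.0.def} exist. For~(i), set $a_n:=f^{(n)}(0)/n!$; writing Taylor's formula centred at $t_0z$ and evaluated at $z$, then letting $t_0\to 0^+$ (the integrand being bounded, the improper integral over the segment from $0$ to $z$ converges), one gets
$$f(z)-\sum_{n=0}^{p-1}a_nz^n=\frac{z^p}{(p-1)!}\int_0^1(1-t)^{p-1}f^{(p)}(tz)\,dt,\qquad z\in S.$$
Bounding $|f^{(p)}(tz)|\le\|f\|_{\M,A}A^p p!\,M_p$ and using $\int_0^1(1-t)^{p-1}\,dt=1/p$ gives exactly $|f(z)-\sum_{n<p}a_nz^n|\le\|f\|_{\M,A}A^pM_p|z|^p$, i.e. $f\sim^u_\M\widehat{f}$ in $S$ of type $1/A$ with $C=\|f\|_{\M,A}$; this gives $\mathcal{A}_\M(S)\subseteq\widetilde{\mathcal{A}}^u_\M(S)$, while $\widetilde{\mathcal{A}}^u_\M(S)\subseteq\widetilde{\mathcal{A}}_\M(S)$ is immediate, as a uniform estimate on $S$ restricts verbatim to any $T\ll S$ with the same constants.

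For~(ii), the substantial direction is ``$\Rightarrow$''. Given $T\ll G$, choose $T'$ with $T\ll T'\ll G$; then there is $\delta\in(0,1)$ with $\overline{D(z,\delta|z|)}\subset T'$ for all $z\in T$. Since $\sum_{n<p}a_nw^n$ has degree $<p$, its $p$-th derivative vanishes, so Cauchy's formula over the circle $|w-z|=\delta|z|$ gives
$$f^{(p)}(z)=\frac{p!}{2\pi i}\int_{|w-z|=\delta|z|}\frac{f(w)-\sum_{n<p}a_nw^n}{(w-z)^{p+1}}\,dw;$$
plugging in the $\M$-asymptotic estimate on $T'$ together with $|w|\le(1+\delta)|z|$ on the contour yields $|f^{(p)}(z)|\le p!\,C_{T'}M_p(A_{T'}(1+\delta)/\delta)^p$, i.e. $f|_T\in\mathcal{A}_{\M,A'_T}(T)$ with $A'_T=A_{T'}(1+\delta)/\delta$. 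For ``$\Leftarrow$'', part~(i) applied to each bounded sector $T\ll G$ already provides the required estimate on $T$ with coefficients $f^{(p)}(0)/p!$ defined as limits inside $T$ via~\eqref{eq.deriv.at.0.def}; it remains to see these coefficients do not depend on $T$, which holds because any two proper bounded subsectors of $G$ lie in a common one and the limit inside a subsector agrees with that inside any larger one. Formula~\eqref{equaCoeffAsympExpanLimitDeriv} and the identity $f^{(p)}(0)=p!a_p$ then follow from~(i) and the uniqueness in~(iv).

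For~(iii), repeat the Cauchy argument of the first half of~(ii) with $S$ in place of $T'$: since $\overline{T}\subset S$ and $S$ is unbounded, the angular gap between $T$ and $S$ produces $\delta\in(0,1)$ with $\overline{D(z,\delta|z|)}\subset S$ for all $z\in T$ (possibly $T$ unbounded), and feeding the uniform estimate of type $1/A$ on $S$ into Cauchy's formula for $f^{(p)}$ over $|w-z|=\delta|z|$ gives $|f^{(p)}(z)|\le p!\,C\,M_p(cA)^p$ for all $z\in T$, with $c=(1+\delta)/\delta$ depending only on $T$ and $S$; hence $f|_T\in\mathcal{A}_{\M,cA}(T)$. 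For~(iv), if $\sum a_pz^p$ and $\sum b_pz^p$ are both $\M$-asymptotic expansions of $f$ in $G$, fix $T\ll G$, subtract the two families of estimates on $T$, and induct on $k$: once $a_0=b_0,\dots,a_{k-1}=b_{k-1}$, the estimate with $p=k+1$ gives $|(a_k-b_k)z^k|\le C|z|^{k+1}$ for $z\in T$, so $a_k=b_k$ on letting $z\to 0$ in $T$.

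The only genuinely geometric point, and the one I expect to require the most care, is the lemma underlying~(ii) and~(iii): for sectors $T$ and $U$ with $\overline{T}\subset U$ (where $U=T'$ is a sectorial region, or $U=S$ is an unbounded sector) one can pick $\delta\in(0,1)$ with $\overline{D(z,\delta|z|)}\subset U$ uniformly in $z\in T$. Here one must exploit that $T$ and $U$ share the vertex $0$, so the constraint is essentially angular — with an additional radius bound when $T$ is bounded and $U=T'$ is only a sectorial region — and that $\overline{T}\subset U$ forces a positive angular gap; granting this, everything else is constant-chasing in the Taylor and Cauchy estimates.
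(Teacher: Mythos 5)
Your proof is correct and follows exactly the route the paper indicates for this proposition (which it does not prove in the text, referring instead to Balser and Galindo--Sanz): Taylor's formula with integral remainder along the ray through $z$ for (i), Cauchy's integral formula over circles $|w-z|=\delta|z|$ after subtracting the degree-$<p$ Taylor polynomial for (ii) and (iii), and the standard induction for (iv). The only nitpick is your justification that the coefficients in (ii) do not depend on $T$: two bounded proper subsectors of a sectorial region need not lie in a \emph{common} proper subsector (their radii can obstruct this), but their intersections with a sufficiently small disc centred at the vertex do, and that is all the limit argument requires.
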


One may accordingly define classes of formal power series
$$\C[[z]]_{\M,A}=\Big\{\widehat{f}=\sum_{n=0}^\infty a_nz^n\in\C[[z]]:\, \left|\,\boldsymbol{a} \,\right|_{\M,A}:=\sup_{p\in\N_{0}}\displaystyle \frac{|a_{p}|}{A^{p}M_{p}}<\infty\Big\}.$$%
$(\C[[z]]_{\M,A},\left| \  \right|_{\M,A})$ is a Banach space and we put $\C[[z]]_{\M}:=\cup_{A>0}\C[[z]]_{\M,A}$.

Given $f\in\widetilde{\mathcal{A}}_{\M}(G)$ with $f\sim_{\M}\widehat{f}$, and taking into account~\eqref{equaCoeffAsympExpanLimitDeriv}, it is straightforward that $\widehat{f}\in\C[[z]]_{\M}$, so it is natural to consider the following map.

\begin{defi}
Given a sectorial region $G$, we define the \textit{asymptotic Borel map}
$$
\widetilde{\mathcal{B}}:\widetilde{\mathcal{A}}_{\M}(G)\longrightarrow \C[[z]]_{\M}$$
sending a function $f\in\widetilde{\mathcal{A}}_{\M}(G)$ into its $\M-$asymptotic expansion $\widehat{f}$.
\end{defi}

\begin{rema}\label{rema.image.Borel.map}
If $G$ is a sector $S$, by Proposition~\ref{propcotaderidesaasin}.(i)
we see that the asymptotic Borel map is also well defined on $\mathcal{A}_{\M}(S)$ and $\widetilde{\mathcal{A}}^u_{\M}(S)$.\par\noindent
If $\M$ is (lc), $\widetilde{\mathcal{B}}$ is a homomorphism of algebras; if $\M$ is also (dc), $\widetilde{\mathcal{B}}$ is a homomorphism of differential algebras.
Finally, note that if $\M\approx\L$, then $\C[[z]]_{\M}=\C[[z]]_{\L}$.
\end{rema}

A fundamental role in the discussion  about the injectivity and surjectivity of the asymptotic Borel map will be played by the flat functions.

\begin{defi}
A function $f$ in any of the previous classes is said to be \textit{flat} if $\widetilde{\mathcal{B}}(f)$ is the null power series, in other words, $f\sim_{\M}\widehat{0}$.%
\end{defi}

One may express flatness in $\widetilde{\mathcal{A}}_{\M}(G)$ by means of the associated functions defined in Subsection~\ref{subsectstrregseq}.

 \begin{prop}[\cite{Thilliez2}, Prop.~4]\label{teorcaracfuncplanaAMS}
Given a sequence $\M$, a sectorial region $G$ and a holomorphic function $f$ in $G$, the following are equivalent:
\begin{enumerate}[(i)]
 \item $f\in\widetilde{\mathcal{A}}_{\M}(G)$ and $f$ is flat,
 \item For every bounded proper subsector $T$ of $G$ there exist $c_1,c_2>0$ with
 $$|f(z)|\le c_1e^{-\o_{\M}(1/(c_2|z|))}= c_1 h_{\M} (c_2 |z|),\qquad z\in T. $$
\end{enumerate}
\end{prop}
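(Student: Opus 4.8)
The proof will rest on the elementary duality between the function $h_{\M}$ and the family of monomial bounds associated with $\M$: since $h_{\M}(t)=\inf_{p\in\N_0}M_p t^p$, for a fixed $t>0$ a bound $g\le h_{\M}(t)$ is equivalent to the whole family of bounds $g\le M_p t^p$, $p\in\N_0$. Together with the identity $\o_{\M}(t)=-\log h_{\M}(1/t)$ recorded in Subsection~\ref{subsectstrregseq} (equivalently $h_{\M}(s)=e^{-\o_{\M}(1/s)}$ for $s>0$), which disposes of the $\o_{\M}$-form of condition (ii), both implications reduce to bookkeeping with this dictionary, the only point of care being that in each statement the constants are permitted to depend on the chosen bounded proper subsector $T$ of $G$ (i.e. on $T\ll G$).

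For (i) $\Rightarrow$ (ii) I would fix $T\ll G$. Flatness of $f$ means $f\sim_{\M}\widehat 0$ in $G$, so there are $C_T,A_T>0$ with $|f(z)|=\bigl|f(z)-\sum_{n=0}^{p-1}0\cdot z^n\bigr|\le C_T A_T^{p}M_p|z|^p$ for all $p\in\N_0$ and all $z\in T$. Taking the infimum over $p$ gives $|f(z)|\le C_T\inf_{p\in\N_0}M_p(A_T|z|)^p=C_T h_{\M}(A_T|z|)=C_T e^{-\o_{\M}(1/(A_T|z|))}$ on $T$, which is exactly (ii) with $c_1=C_T$, $c_2=A_T$.

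For (ii) $\Rightarrow$ (i) I would again fix $T\ll G$ and take $c_1,c_2>0$ as in (ii). For each fixed $p\in\N_0$ one has $h_{\M}(c_2|z|)=\inf_{q}M_q(c_2|z|)^q\le M_p(c_2|z|)^p$, hence $|f(z)|\le c_1 c_2^{p}M_p|z|^p$ on $T$; in particular (the case $p=0$) $f$ is bounded on $T$. Since $f$ is holomorphic in $G$ by hypothesis, and since $T\ll G$ was arbitrary, these estimates are precisely the statement that $f\in\widetilde{\mathcal A}_{\M}(G)$ with $f\sim_{\M}\widehat 0$; by uniqueness of the asymptotic expansion (Proposition~\ref{propcotaderidesaasin}.(iv)) this gives $\widetilde{\mathcal B}(f)=\widehat 0$, i.e. $f$ is flat.

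I do not expect a genuine obstacle: the proposition is essentially a reformulation, and the only delicate points are using the relation $h_{\M}(1/t)=e^{-\o_{\M}(t)}$ correctly and keeping track of the per-subsector nature of the constants, so that neither direction accidentally asserts a uniform estimate on all of $G$. Should a fully self-contained argument be wanted, the identity $h_{\M}(1/t)=e^{-\o_{\M}(t)}$ can be verified directly from the piecewise expressions for $h_{\M}$ and $\o_{\M}$ displayed in Subsection~\ref{subsectstrregseq}.
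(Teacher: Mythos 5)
Your proof is correct: the paper does not prove this proposition itself (it is quoted from Thilliez's work, cited as \cite{Thilliez2}, Prop.~4), and your argument is exactly the standard one, namely that flatness on a subsector $T$ is the family of bounds $|f(z)|\le C_TA_T^pM_p|z|^p$ over all $p$, whose infimum over $p$ is $C_Th_{\M}(A_T|z|)=C_Te^{-\o_{\M}(1/(A_T|z|))}$, and conversely a bound by $c_1h_{\M}(c_2|z|)$ unpacks into the whole family of monomial bounds. You also correctly keep the constants subsector-dependent, which is the only point where one could go wrong.
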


In the Gevrey case of order $\a$
we recover the classical result that characterizes flatness in terms of exponential decrease of order $1/\a$.

\subsection{Injectivity and surjectivity intervals for the asymptotic Borel map}\label{subsectIntervals}

By using a simple rotation, we see that the injectivity and the surjectivity of the Borel map in any of the previously considered classes do not depend on the bisecting direction $d$ of the sectorial region $G$, so we limit ourselves to the case $d=0$. Moreover, in this paper we will restrict our study to the unbounded sectors $S_{\ga}$, and include comments on what can be said, to our knowledge, for more general sectorial regions. So, we define
\begin{align*}
I_{\M}:=&\{\ga>0; \quad \widetilde{\mathcal{B}}:\mathcal{A}_{\M}(S_\ga)\longrightarrow \C[[z]]_{\M} \text{ is injective}\}  ,\\
\widetilde{I}^u_{\M}:=&\{\ga>0; \quad\widetilde{\mathcal{B}}:\widetilde{\mathcal{A}}^u_{\M}(S_\ga)\longrightarrow \C[[z]]_{\M} \text{ is injective}\},\\
\widetilde{I}_{\M}:=&\{\ga>0;\quad \widetilde{\mathcal{B}}:\widetilde{\mathcal{A}}_{\M}(S_\ga)\longrightarrow \C[[z]]_{\M} \text{ is injective} \}.
\end{align*}
Whenever $\gamma>0$ belongs to any of these sets, we say that the corresponding class is \emph{quasianalytic}. So, nonquasianalyticity amounts to the existence of nontrivial flat functions in the class.

We easily observe that, by restriction and the identity principle, if $\ga>0$ is in any of those sets then every $\ga'>\ga$ also is. Hence, $I_{\M}$, $\widetilde{I}^u_{\M}$ and $\widetilde{I}_{\M}$ are either empty or unbounded intervals contained in $(0,\infty)$, which we call \emph{quasianalyticity or injectivity intervals}.

Similarly, we define
\begin{align*}
S_{\M}:=&\{\ga>0; \quad \widetilde{\mathcal{B}}:\mathcal{A}_{\M}(S_\ga)\longrightarrow \C[[z]]_{\M} \text{ is surjective}\} ,\\
\widetilde{S}^u_{\M}:=&\{\ga>0; \quad\widetilde{\mathcal{B}}:\widetilde{\mathcal{A}}^u_{\M}(S_\ga)\longrightarrow \C[[z]]_{\M} \text{ is surjective}\}, \\
\widetilde{S}_{\M}:=&\{\ga>0;\quad \widetilde{\mathcal{B}}:\widetilde{\mathcal{A}}_{\M}(S_\ga)\longrightarrow \C[[z]]_{\M} \text{ is surjective} \}.
\end{align*}
It is also plain to check that if $\ga>0$ is in any of those sets then every $0<\ga'<\ga$ also is, so $S_{\M}$, $\widetilde{S}^u_{\M}$ and $\widetilde{S}_{\M}$ are either empty or left-open intervals  having $0$ as endpoint, called \emph{surjectivity intervals}.
Using~Proposition~\ref{propcotaderidesaasin}.(i) 
we easily see that
\begin{align}
I_{\M}\supseteq \widetilde{I}^u_{\M} \supseteq\widetilde{I}_{\M},
\label{equaContentionInjectIntervals}\\
S_{\M}\subseteq \widetilde{S}^u_{\M} \subseteq\widetilde{S}_{\M}.
\label{equaContentionSurjectIntervals}
\end{align}

\begin{rema}\label{remaTrivialWhenmnotInfty}
In the forthcoming results we will only deal with weight sequences. The requirement of (lc) condition is motivated in Remarks~\ref{rema.alg.prop.ultra.class} and~\ref{rema.image.Borel.map}. 
In order to justify the limit condition for $\m$, observe that for a (lc) sequence $\M$, if $\lim_{p\to\infty} m_p \not=\infty$ then $\lim_{p\to\infty} m_p <\infty$
and also $\lim_{p\to\infty} (M_p)^{1/p} <\infty$ (see Remark~\ref{remaConseqLC}). Then there exists $A>0$  such that $h_{\M}(t)=0$  for all $t\in[0,A]$. Hence, by Proposition~\ref{teorcaracfuncplanaAMS}, if $G$ is any sectorial region and $f\in\widetilde{\mathcal{A}}_{\M}(G)$ is flat, we have that $f(t)=0$ for every $t\in(0,A]$ which, by the identity principle, implies that  $f(z)$ identically vanishes in $G$. Consequently, the Borel map is always injective.\par

On the other hand, in the same situation, the Borel map is never surjective: Choose $R>0$ such that $R<|z|$ for some $z\in G$. We can consider a holomorphic function at the origin $L(z)$ whose Taylor expansion at $0$ is given by a convergent lacunary series $\widehat{L}\in\C[[z]]_{\M}$, whose domain of convergence is the disc of radius $R$ and has the circle of this radius as its natural boundary. We have that $L\sim_{\M} \widehat{L}$ on a region $G'\subseteq G$, so by the injectivity of the Borel map there cannot exist another function $E\in\widetilde{\mathcal{A}}_{\M}(G)\subseteq \widetilde{\mathcal{A}}_{\M}(G') $  with $E\sim_{\M}\widehat{L}$.
Since $L$ cannot be analytically continued to $G$, the Borel map is not surjective.

\end{rema}

\section{Injectivity intervals: known results, and complete solution of the problem}\label{sect.Inject.Borel.map}

The quasianalyticity intervals $\widetilde{I}^u_{\M}$ and $I_{\M}$ were determined in the literature in the 1950's.
The first case is basically answered by the following result of S. Mandelbrojt in 1952.

\begin{theo}[\cite{Mandelbrojt},\ Section\ 2.4.III]\label{theo.Mandelbrojt}
 Let $\M$ be a weight sequence, $\ga>0$, $b\ge 0$ and
 $$
 H_b=\{z\in\C: \Re(z)>b \}.
 $$
 The following statements are equivalent:
 \begin{enumerate}[(i)]
  \item $\displaystyle \sum_{p=0}^{\infty} \left(\frac{1}{m_p} \right)^{1/\ga}$ diverges,
  \item If $f$ is holomorphic in $H_b$ and there exist $A,C>0$ such that
  \begin{equation}\label{eq.BoundsTh.Mandel}
  |f(z)|\leq \frac{CA^pM_p}{|z|^{\ga p}}, \qquad z\in H_b, \quad p\in \N_0,
  \end{equation}
  then $f$ identically vanishes.
 \end{enumerate}
\end{theo}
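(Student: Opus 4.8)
The plan is to prove $(i)\Leftrightarrow(ii)$ by potential theory on a half--plane, after a routine reduction to $b=0$. If $\Re\zeta>0$ and $b\ge 0$ then $|\zeta+b|^{2}-|\zeta|^{2}=2b\,\Re\zeta+b^{2}>0$, so $\zeta\mapsto f(\zeta+b)$ carries the bounds~\eqref{eq.BoundsTh.Mandel} on $H_b$ into the same bounds on $H:=\{\Re z>0\}$, while any function on $H$ obeying them restricts to $H_b$; hence it suffices to treat $b=0$. I would then record that, since $h_{\M}(t)=\inf_{p}M_pt^p$, the bounds~\eqref{eq.BoundsTh.Mandel} with $b=0$ are equivalent to
$$|f(z)|\le C\,h_{\M}\big(A|z|^{-\gamma}\big)=C\,e^{-\o_{\M}(|z|^{\gamma}/A)},\qquad z\in H,$$
and that the series in $(i)$ converges if and only if $J:=\int_{\R}\o_{\M}(|y|^{\gamma}/A)\,(1+y^{2})^{-1}\,dy<\infty$: the substitution $s=|y|^{\gamma}$ reduces $J$ to a constant times $\int_{c}^{\infty}\o_{\M}(s/A)\,s^{-1-1/\gamma}\,ds$, and a Fubini/integration--by--parts computation using $\o_{\M}'(s)=p/s$ on $(m_{p-1},m_p)$ (i.e.\ $\o_{\M}(s)=\int_{m_0}^{s}N(t)\,t^{-1}\,dt$ with $N(t)=\#\{q:m_q\le t\}$) turns this into a constant times $\sum_{p}(1/m_p)^{1/\gamma}$.

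For $(i)\Rightarrow(ii)$ I would combine the maximum principle with the behaviour of harmonic measure. Let $f$ be holomorphic on $H$ with $|f(z)|\le C\,h_{\M}(A|z|^{-\gamma})$ (hence $|f|\le C$), and fix $z_0\in H$. For $R>|z_0|$ put $H_R=\{z\in H:|z|<R\}$ and let $u_R$ be harmonic on $H_R$ with boundary values $\log|f|$. Subharmonicity of $\log|f|$ gives $\log|f(z_0)|\le u_R(z_0)$; estimating $\log|f|\le\log C$ on the outer arc and $\log|f(iy)|\le\log C-\o_{\M}(|y|^{\gamma}/A)$ on the segment of the imaginary axis yields
$$\log|f(z_0)|\le\log C-\int_{|y|<R}\o_{\M}(|y|^{\gamma}/A)\,d\omega_R(z_0,iy),$$
where $\omega_R(z_0,\cdot)$ is harmonic measure of $H_R$ at $z_0$. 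As $R\to\infty$ the mass of the outer arc tends to $0$ and $\omega_R(z_0,\cdot)$ converges weakly on the imaginary axis to the Poisson measure $\tfrac1\pi\tfrac{\Re z_0}{|z_0-iy|^{2}}\,dy$; by Fatou's lemma the right--hand side tends to $-\infty$ precisely because $J=\infty$ under $(i)$. Thus $f(z_0)=0$ for every $z_0$, i.e.\ $f\equiv0$.

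For $(ii)\Rightarrow(i)$ I would argue by contraposition and, assuming $J<\infty$, exhibit a nontrivial flat function violating $(ii)$. Set
$$f(z):=\exp\!\Big(-\frac1\pi\int_{\R}\frac{z}{z^{2}+y^{2}}\,\o_{\M}(|y|^{\gamma}/A)\,dy\Big),\qquad z\in H.$$
Since $\Re\frac{z}{z^{2}+y^{2}}>0$ on $H$ and $\int_{\R}\o_{\M}(|y|^{\gamma}/A)(|z|^{2}+y^{2})^{-1}\,dy<\infty$ by $J<\infty$, the integral defines a holomorphic function on $H$; so $f$ is holomorphic and zero--free there, $|f|\le 1$, and $|f(z)|=e^{-P[\o_{\M}(|\cdot|^{\gamma}/A)](z)}$, where $P$ denotes the Poisson extension to $H$. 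It remains to check $P[\o_{\M}(|\cdot|^{\gamma}/A)](z)\ge\o_{\M}(|z|^{\gamma}/A)$: writing $\o_{\M}(|z|^{\gamma}/A)=\phi(\gamma\log|z|-\log A)$ with $\phi(s):=\o_{\M}(e^{s})$ convex and nondecreasing, $\o_{\M}(|z|^{\gamma}/A)$ is a supremum of functions $\beta\log|z|+c$ with $\beta\ge 0$ and $\beta\log|y|+c\le\o_{\M}(|y|^{\gamma}/A)$ on $\partial H$; each such function is harmonic on $H$ and equals its own Poisson extension (since the Poisson extension of $y\mapsto\log|y|$ is $z\mapsto\log|z|$), hence each is $\le P[\o_{\M}(|\cdot|^{\gamma}/A)]$, and so is their supremum. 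Therefore $|f(z)|\le h_{\M}(A|z|^{-\gamma})$, i.e.\ \eqref{eq.BoundsTh.Mandel} holds on $H\supseteq H_b$ with $C=1$ for a nonzero $f$, contradicting $(ii)$.

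The reduction to $b=0$ and the equivalence $J<\infty\Leftrightarrow\sum(1/m_p)^{1/\gamma}<\infty$ are routine. I expect the real work to be the potential theory on the \emph{unbounded} domain $H$: in $(i)\Rightarrow(ii)$, verifying that harmonic measure of the half--disk loses no mass at infinity, so that the limit is legitimate and the contribution of the outer arc is harmless; and in $(ii)\Rightarrow(i)$, ensuring that the Poisson integral of the unbounded datum $\o_{\M}(|y|^{\gamma}/A)$ is finite and dominates $\o_{\M}(|z|^{\gamma}/A)$ — the potential ambiguity of harmonic functions on $H$ by a term $cx$ being killed by the slow (essentially $J$--controlled) growth of both sides, which is exactly where the finiteness of $J$, equivalently the convergence of the series in $(i)$, is used. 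An alternative would be to simply cite Mandelbrojt's original argument via his quasianalyticity theory.
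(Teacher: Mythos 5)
Your proposal is correct in outline, but note that the paper itself does not prove this theorem at all: it is quoted verbatim from Mandelbrojt's monograph (Section 2.4.III) and used as a black box, so there is no internal proof to compare against. What you have written is essentially the classical logarithmic-integral argument (the one underlying Mandelbrojt's treatment and its modern renderings \`a la Koosis): the translation trick $|\zeta+b|\ge|\zeta|$ reducing to $b=0$ is fine; the identification of the bounds with $|f|\le C\,h_{\M}(A|z|^{-\gamma})=Ce^{-\o_{\M}(|z|^{\gamma}/A)}$ is exactly the definition of $h_{\M}$; and the Fubini computation converting $\int\o_{\M}(|y|^{\gamma}/A)(1+y^{2})^{-1}dy$ into $\sum_p m_p^{-1/\gamma}$ via $\o_{\M}(s)=\int_{m_0}^{s}N(t)t^{-1}dt$ is correct and uses (lc) and $m_p\to\infty$ where it should. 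In the direction $(i)\Rightarrow(ii)$ the only point you gloss over is that $f$ is holomorphic only on the \emph{open} half-plane, so ``boundary values $\log|f|$ on $i\R$'' are not literally available; the standard fix is to run the half-disk harmonic-measure estimate on $\{\Re z>\eps\}$, where the hypothesis still gives $\log|f(\eps+iy)|\le\log C-\o_{\M}(|y|^{\gamma}/A)$ because $|\eps+iy|\ge|y|$, and then let $\eps\to0$ together with $R\to\infty$; with that adjustment the Fatou/weak-convergence step and the $O(1/R)$ decay of the harmonic measure of the outer arc are classical. In the direction $(ii)\Rightarrow(i)$ your symmetrized kernel $\tfrac{z}{z^{2}+y^{2}}=\tfrac12\bigl(\tfrac{1}{z-iy}+\tfrac{1}{z+iy}\bigr)$ is a nice touch: because the datum is even it reproduces exactly the Poisson extension while making the integral absolutely convergent under $J<\infty$, and the domination $P[\o_{\M}(|\cdot|^{\gamma}/A)]\ge\o_{\M}(|z|^{\gamma}/A)$ via the representation of $\o_{\M}$ as a supremum of functions $p\gamma\log|z|+c_p$, each of which is the Poisson integral of its own boundary data (this uses $\int_0^\infty\log t\,(1+t^2)^{-1}dt=0$ plus scaling, or a Phragm\'en--Lindel\"of uniqueness to exclude a spurious $c\,\Re z$ term), is exactly right. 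So: no genuine gap, but your argument is a self-contained replacement for a result the paper only cites, and the one step you should make explicit is the $\eps$-shift needed to speak of boundary values on the imaginary axis.
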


On the one hand, observe that a function $f$ is holomorphic in $H_0$ and verifies the estimates \eqref{eq.BoundsTh.Mandel} if and only if the function $g$ given by $g(z):=f(1/z^{1/\ga})$ belongs to $\widetilde{\mathcal{A}}_{\M}^u(S_{\ga})$ and is flat.

On the other hand, the study of the divergence of the series in (i) is governed by the so-called exponent of convergence of the sequence $\bm$, appearing in the classical theory of growth and factorization of entire functions.

\begin{prop}[\cite{holland}, p.\ 65]
Let $(c_p)_{p\in\N_0}$ be a nondecreasing sequence of positive real numbers tending to infinity. The \emph{exponent of convergence} of $(c_p)_p$ is defined as
$$
\lambda_{(c_p)}:=\inf\{\mu>0:\sum_{p=0}^\infty \frac{1}{c_p^{\mu}}\textrm{ converges}\}
$$
(if the previous set is empty, we put $\lambda_{(c_p)}=\infty$). Then, one has
\begin{equation*}
\lambda_{(c_p)}=\limsup_{p\to\infty}\frac{\log(p)}{\log(c_p)}.
\end{equation*}
\end{prop}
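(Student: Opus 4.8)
The plan is to prove the two inequalities $\lambda_{(c_p)}\le\Lambda$ and $\lambda_{(c_p)}\ge\Lambda$ separately, where I abbreviate $\Lambda:=\limsup_{p\to\infty}\frac{\log p}{\log c_p}$. Since $(c_p)$ is nondecreasing and tends to infinity, there is $p_0$ with $c_p>1$ for $p\ge p_0$, so the quotients $\log p/\log c_p$ make sense and are nonnegative for large $p$, and $\Lambda\in[0,\infty]$. For $\lambda_{(c_p)}\le\Lambda$: assuming $\Lambda<\infty$ (otherwise nothing is to be shown) and fixing any $\mu>\Lambda$, choose $\mu'$ with $\Lambda<\mu'<\mu$; the definition of $\limsup$ gives $\log p\le\mu'\log c_p$, i.e.\ $c_p\ge p^{1/\mu'}$, for all $p$ large enough, hence $c_p^{-\mu}\le p^{-\mu/\mu'}$ eventually, and since $\mu/\mu'>1$ the series $\sum_p c_p^{-\mu}$ converges by comparison with a convergent $p$-series. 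Thus every $\mu>\Lambda$ lies in the set defining $\lambda_{(c_p)}$, so $\lambda_{(c_p)}\le\Lambda$.

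For the reverse inequality I would argue by contraposition, exploiting the monotonicity of $(c_p)$: let $\mu>0$ be such that $\sum_p c_p^{-\mu}$ converges and show $\mu\ge\Lambda$. Since $c_p\le c_{2N}$, hence $c_p^{-\mu}\ge c_{2N}^{-\mu}$, for every $p\le 2N$, the block of $N$ terms from $p=N+1$ to $p=2N$ satisfies
\[
\sum_{p=N+1}^{2N}c_p^{-\mu}\ \ge\ N\,c_{2N}^{-\mu}.
\]
The left-hand side is a tail of a convergent series, so it tends to $0$ as $N\to\infty$; therefore $N\,c_{2N}^{-\mu}\to0$, which forces $\mu\log c_{2N}\ge\log N$ for all large $N$, and consequently $\frac{\log(2N)}{\log c_{2N}}\le\mu+\frac{\log 2}{\log c_{2N}}$. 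Letting $N\to\infty$ (recall $c_{2N}\to\infty$) yields $\limsup_N\frac{\log(2N)}{\log c_{2N}}\le\mu$; since $c_{2N+1}\ge c_{2N}$ and $\log(2N+1)=\log(2N)+o(1)$, the same bound holds along odd indices, so $\Lambda\le\mu$. Taking the infimum over the admissible $\mu$ gives $\Lambda\le\lambda_{(c_p)}$, completing the proof.

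The only point that is not completely routine is this last (divergence) direction: a subsequence along which $\log p/\log c_p$ is close to $\Lambda$ does not by itself force $\sum_p c_p^{-\mu}$ to diverge, so it is essential to feed the monotonicity of $(c_p)$ into a block estimate as above — equivalently, one may invoke Cauchy's condensation test. Everything else reduces to comparison with $p$-series. As this is the classical characterization of the exponent of convergence, I would keep the write-up short and refer to~\cite{holland}.
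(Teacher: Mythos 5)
Your argument is correct: the comparison with $p$-series for the convergence direction, and the monotonicity-based block (condensation) estimate for the divergence direction, together give both inequalities, and you rightly flag that monotonicity is essential in the second step. The paper itself supplies no proof of this proposition — it is simply quoted from Holland's book — and your write-up is precisely the standard classical argument one finds there, so there is nothing to reconcile.
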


We consider now the closely related growth index (introduced in~\cite{SanzFlatProxOrder}, see also~\cite{JimenezSanzSRSPO}) for weight sequences $\M$,
$$\o(\M):= \displaystyle\liminf_{p\to\infty} \frac{\log(m_{p})}{\log(p)}\in[0,\infty],
$$
and we easily see that
\begin{equation}\label{equaOmegaMExpoConver}
\o(\M)=\frac{1}{\lambda_{(m_p)}}=\frac{1}{\lambda_{((p+1)m_p)}} -1,
\end{equation}
or, in other words,
$$ \o(\M)=\sup\{\mu>0: \sum^\infty_{p=0} \frac{1}{(m_p)^{1/\mu}}<\infty \},$$
\begin{equation}\label{equaOmegaExponConvergenceWidehatM}
\o(\M)=\sup\{\mu>0: \sum^\infty_{p=0} \frac{1}{((p+1)m_p)^{1/(\mu+1)}} < \infty \}.
\end{equation}

After all these remarks, we may rephrase Mandelbrojt's result in the following way.

\begin{theo}[\cite{Mandelbrojt}]\label{theoMandelbrojtUniformAsympt}
Let $\M$ be a weight sequence and $\ga>0$. The following statements are equivalent:
\begin{itemize}
\item[(i)] $\widetilde{\mathcal{B}}:\widetilde{\mathcal{A}}^u_{\M}(S_{\gamma}) \longrightarrow \C[[z]] _\M$ is injective.
\item[(ii)] $\sum^{\infty}_{p=0} (m_p)^{-1/\ga}=\infty$.
\item[(iii)] Either $\ga>\o(\M)$, or $\ga=\o(\M)$ and $\sum^{\infty}_{p=0} (m_p)^{-1/\o(\M)}=\infty$.
\end{itemize}
\end{theo}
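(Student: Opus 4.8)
The plan is to deduce Theorem~\ref{theoMandelbrojtUniformAsympt} from Theorem~\ref{theo.Mandelbrojt} together with the two elementary observations recorded immediately after it, namely the change of variable $g(z)=f(1/z^{1/\ga})$ and the identification of the divergence of $\sum_p m_p^{-1/\ga}$ with a statement about the exponent of convergence $\lambda_{(m_p)}$. The equivalence of (i) and (ii) is essentially a translation: first I would note that, by Proposition~\ref{propcotaderidesaasin}.(i) and the remark that every $f\in\widetilde{\mathcal{A}}^u_{\M}(S_\ga)$ is bounded, injectivity of $\widetilde{\mathcal{B}}$ on $\widetilde{\mathcal{A}}^u_{\M}(S_\ga)$ is equivalent to the nonexistence of a nonzero flat $f\in\widetilde{\mathcal{A}}^u_{\M}(S_\ga)$. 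Then I would spell out the bijection $f\leftrightarrow g$: $f$ is holomorphic on the half-plane $H_0=\{\Re(z)>0\}$ and satisfies the bounds~\eqref{eq.BoundsTh.Mandel} with $b=0$ if and only if $g(z)=f(z^{-1/\ga})$ is holomorphic on $S_\ga$ and is flat of uniform $\M$-asymptotic type, since $z\in S_\ga$ corresponds to $z^{-1/\ga}\in H_0$ and $|f(z^{-1/\ga})|\le CA^p M_p |z|^p$ is exactly the $p=$all estimate defining flatness via~\eqref{desarasintunifo} with null series (use Proposition~\ref{teorcaracfuncplanaAMS} if one prefers the $h_\M$ form, but the direct estimate is cleaner here). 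Applying Theorem~\ref{theo.Mandelbrojt} with $b=0$ then gives: no nonzero such $f$ exists $\iff$ $\sum_p m_p^{-1/\ga}$ diverges, which is (ii). One small point to check is that Theorem~\ref{theo.Mandelbrojt} is stated for $H_b$ with arbitrary $b\ge 0$ while flatness naturally lives on $H_0$; taking $b=0$ suffices, and conversely a function flat on $S_\ga$ restricted to a subsector still gives a function on some $H_b$, but since we only need the direction ``(ii)$\Rightarrow$ injectivity'' and ``non-injectivity $\Rightarrow$ non-(ii)'', working with $b=0$ throughout is enough.

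For the equivalence of (ii) and (iii) I would argue purely at the level of the sequence $\m$, using the Proposition on the exponent of convergence and the definition $\o(\M)=1/\lambda_{(m_p)}$ from~\eqref{equaOmegaMExpoConver}. Write $\mu=1/\ga$, so (ii) says $\sum_p m_p^{-\mu}=\infty$. By definition of $\lambda_{(m_p)}$ as the infimum of the exponents for which the series converges: if $\mu>\lambda_{(m_p)}$ the series converges, hence (ii) fails; if $\mu<\lambda_{(m_p)}$ the series diverges, hence (ii) holds; and at the borderline $\mu=\lambda_{(m_p)}$ the series may go either way, so (ii) holds iff the specific series $\sum_p m_p^{-\lambda_{(m_p)}}$ diverges. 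Translating back via $\ga=1/\mu$ and $\o(\M)=1/\lambda_{(m_p)}$: $\mu>\lambda_{(m_p)}\iff\ga<\o(\M)$, $\mu<\lambda_{(m_p)}\iff\ga>\o(\M)$, and $\mu=\lambda_{(m_p)}\iff\ga=\o(\M)$, with the last series being $\sum_p m_p^{-1/\o(\M)}$. This yields exactly the dichotomy in (iii): (ii) holds iff $\ga>\o(\M)$, or $\ga=\o(\M)$ and $\sum_p m_p^{-1/\o(\M)}=\infty$. (The degenerate cases $\o(\M)=0$, i.e. $\lambda_{(m_p)}=\infty$, and $\o(\M)=\infty$, i.e. $\lambda_{(m_p)}=0$, should be mentioned: in the former the series always diverges so (i)--(iii) hold for every $\ga>0$ — consistent with ``$\ga>\o(\M)$ always''; in the latter the series always converges, and the ``$\ga=\o(\M)$'' clause is vacuous, so (i)--(iii) never hold.)

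I do not expect a serious obstacle here: the theorem is presented in the text explicitly as a ``rephrasing'' of Theorem~\ref{theo.Mandelbrojt}, and both links in the chain are routine. The only place that needs a little care is making the change of variable rigorous on the level of classes — in particular checking that boundedness of $g$ on all of $S_\ga$ (automatic for members of $\widetilde{\mathcal{A}}^u_{\M}$, as noted after~\eqref{desarasintunifo}) matches the hypothesis in Theorem~\ref{theo.Mandelbrojt} that $f$ is merely holomorphic on $H_0$ with the stated polynomial-type bounds, and conversely that the bound~\eqref{eq.BoundsTh.Mandel} with the full family over $p\in\N_0$ is precisely equivalent to flatness in the uniform $\M$-asymptotic sense. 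I would write this correspondence out once, carefully, and then the rest is bookkeeping with the exponent of convergence.
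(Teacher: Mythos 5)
Your proposal is correct and follows exactly the route the paper itself takes: the text immediately preceding the theorem supplies precisely your two ingredients, namely the identification (via $g(z)=f(1/z^{1/\ga})$, with $b=0$) of nontrivial flat functions in $\widetilde{\mathcal{A}}^u_{\M}(S_\ga)$ with nonzero holomorphic functions on $H_0$ satisfying~\eqref{eq.BoundsTh.Mandel}, and the translation of (ii) into (iii) through the exponent of convergence $\lambda_{(m_p)}=1/\o(\M)$ from~\eqref{equaOmegaMExpoConver}. Your treatment of the borderline case $\ga=\o(\M)$ and of the degenerate values $\o(\M)\in\{0,\infty\}$ is consistent with the paper's statement, so there is nothing to add.
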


Similarly, the knowledge of $\widetilde{I}^u_{\M}$ amounts to the next equivalence $(i)\Leftrightarrow(ii)$ obtained by B.~Rodr{\'\i}guez Salinas~\cite{Salinas} in 1955 (see also~\cite{korenbljum}), whereas the following item $(iii)$ stems again from~\eqref{equaOmegaMExpoConver}.

\begin{theo}[\cite{Salinas}, Thm.\ 12]\label{theoSalinas}
Let $\M$ be a weight sequence and $\ga>0$. The following statements are equivalent:
\begin{itemize}
\item[(i)] $\widetilde{\mathcal{B}}:\mathcal{A}_{\M}(S_{\gamma})\longrightarrow \C[[z]] _\M$ is injective.
\item[(ii)] $\sum^{\infty}_{p=0} ((p+1)m_{p})^{-1/(\ga+1)}=\infty$.
\item[(iii)] Either $\ga>\o(\M)$, or $\ga=\o(\M)$ and $ \sum_{p=0}^{\infty} ((p+1)m_{p})^{-1/(\o(\M)+1)}=\infty$.
\end{itemize}
\end{theo}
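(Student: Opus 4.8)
The plan is to reduce the statement to Mandelbrojt's criterion (Theorem~\ref{theoMandelbrojtUniformAsympt}) by a change of weight sequence, and then to read off item $(iii)$ from the characterization of $\o(\M)$ via the exponent of convergence in~\eqref{equaOmegaMExpoConver} and~\eqref{equaOmegaExponConvergenceWidehatM}. First I would observe that a function $f$ holomorphic in $S_\ga$ belongs to $\mathcal{A}_{\M,A}(S_\ga)$ and is flat exactly when its derivatives are controlled by $A^nn!M_n$; by Proposition~\ref{propcotaderidesaasin}.(i) such an $f$ admits $\widehat 0$ as uniform asymptotic expansion, and conversely by Cauchy's estimates on subsectors a flat function with suitable bounds on $S_\ga$ lies in $\mathcal{A}_{\M}$ on slightly smaller sectors. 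The key point is that controlling $|f^{(n)}|$ by $A^nn!M_n$ amounts, up to the usual Cauchy/Taylor passage, to controlling $|f|$ by the sequence $(p!M_p)_{p}$; writing $\widehat{\M}:=(p!M_p)_{p\in\N_0}$, which has quotients $\widehat m_p=(p+1)m_p$, one expects the (non)quasianalyticity of $\mathcal{A}_{\M}(S_\ga)$ to coincide with that of $\widetilde{\mathcal{A}}^u_{\widehat\M}(S_\ga)$. Thus Theorem~\ref{theoMandelbrojtUniformAsympt} applied to $\widehat\M$ would give $(i)\Leftrightarrow \sum_p (\widehat m_p)^{-1/\ga}=\sum_p((p+1)m_p)^{-1/\ga}=\infty$ — but note this is \emph{not} quite $(ii)$: the exponent in $(ii)$ is $1/(\ga+1)$, not $1/\ga$. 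This discrepancy is exactly the $-1$ shift in~\eqref{equaOmegaMExpoConver}, and it signals that the reduction must be done more carefully.

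The honest route, which I would follow, mirrors the reasoning leading to Theorem~\ref{theoMandelbrojtUniformAsympt} but starting one rung lower. One shows $(i)\Leftrightarrow(ii)$ directly: via the conformal map $z\mapsto 1/z^{1/(\ga+1)}$, flat functions in $\mathcal{A}_{\M}(S_\ga)$ correspond to holomorphic functions on a half-plane $H_b$ satisfying bounds of Mandelbrojt type for the sequence $\widehat\M=(p!M_p)_p$ with exponent $\ga+1$ in the denominator $|z|^{(\ga+1)p}$ — the extra $+1$ in the exponent being produced precisely because differentiating $n$ times a function of $1/z^{1/(\ga+1)}$ and estimating by Cauchy's formula costs a factor $p!$, i.e.\ upgrades $M_p$ to $p!M_p$ and simultaneously shifts $\ga$ to $\ga+1$ in the scaling. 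Invoking Theorem~\ref{theo.Mandelbrojt} with $\M$ replaced by $\widehat\M$ and $\ga$ replaced by $\ga+1$ then yields that quasianalyticity is equivalent to divergence of $\sum_p (\widehat m_p)^{-1/(\ga+1)}=\sum_p((p+1)m_p)^{-1/(\ga+1)}$, which is $(ii)$. This is the approach already implicit in the excerpt's phrasing ``the following item $(iii)$ stems again from~\eqref{equaOmegaMExpoConver}'' and attributed to Rodr{\'\i}guez-Salinas; I would cite his Theorem~12 for the analytic core and supply the change-of-variable bookkeeping.

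For $(ii)\Leftrightarrow(iii)$ I would argue purely at the level of numerical series, with no function theory. By~\eqref{equaOmegaExponConvergenceWidehatM}, $\o(\M)=\sup\{\mu>0:\sum_p((p+1)m_p)^{-1/(\mu+1)}<\infty\}$, i.e.\ $\ga+1$ is compared against the exponent of convergence of the sequence $((p+1)m_p)_p$, whose value is $1/(\o(\M)+1)$. If $\ga>\o(\M)$ then $1/(\ga+1)<1/(\o(\M)+1)=\lambda_{((p+1)m_p)}^{-1}\cdot$\dots — more precisely $1/(\ga+1)$ lies strictly below the exponent of convergence, so the series in $(ii)$ diverges; if $\ga<\o(\M)$ it converges; and if $\ga=\o(\M)$ the series sits exactly at the critical exponent, where convergence is not determined by $\ga$ alone and must be stated as the explicit alternative in $(iii)$. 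This trichotomy is the standard boundary behaviour of $p$-series-type sums at the exponent of convergence, so the only content is matching the exponents, which~\eqref{equaOmegaMExpoConver}--\eqref{equaOmegaExponConvergenceWidehatM} do for us.

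The main obstacle I anticipate is the first equivalence: making the passage between derivative estimates on the sector and modulus estimates on the half-plane fully rigorous, in particular verifying that one does not lose or gain the correct power of $p!$ (equivalently, the correct shift from $\ga$ to $\ga+1$) when combining Taylor's formula with Cauchy's integral formula, and handling the harmless shrinking of the sector / enlarging of $b$ that the transfer forces. Once the bookkeeping that ``$\mathcal{A}_{\M}$ on $S_\ga$ $\leftrightarrow$ Mandelbrojt-class for $\widehat\M=(p!M_p)_p$ on a half-plane with scaling exponent $\ga+1$'' is pinned down, everything else is either a direct citation of Theorem~\ref{theo.Mandelbrojt} (or Rodr{\'\i}guez-Salinas, Thm.~12) or an elementary manipulation of convergence exponents.
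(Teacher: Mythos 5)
Your treatment coincides with the paper's: the article offers no independent proof of $(i)\Leftrightarrow(ii)$, but simply cites Rodr{\'\i}guez-Salinas~\cite{Salinas}, Thm.~12, and then observes that $(ii)\Leftrightarrow(iii)$ is an immediate consequence of the identity $\o(\M)=1/\lambda_{((p+1)m_p)}-1$ from~\eqref{equaOmegaMExpoConver}, exactly as you do in your last two paragraphs. Since you ultimately defer to the same citation for the analytic core and your exponent-of-convergence argument for $(ii)\Leftrightarrow(iii)$ is correct, the proposal is acceptable as it stands.

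One caveat on the self-contained route you sketch for $(i)\Leftrightarrow(ii)$: the mechanism you propose (the conformal map $z\mapsto 1/z^{1/(\ga+1)}$ combined with Cauchy estimates) would not actually produce the shift $\M\rightsquigarrow\widehat{\M}$, $\ga\rightsquigarrow\ga+1$. Taylor's formula applied to a flat $f\in\mathcal{A}_{\M,A}(S_\ga)$ only yields $|f(z)|\le A^pM_p|z|^p$, i.e.\ Mandelbrojt bounds for $\M$ with exponent $\ga$, which is the $\widetilde{\mathcal{A}}^u_{\M}$ criterion $\sum(m_p)^{-1/\ga}$ and not $(ii)$; and composing with $1/z^{1/(\ga+1)}$ rescales the exponent the wrong way. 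The device that genuinely accomplishes the upgrade $M_p\to p!M_p$ together with the enlargement of the opening by $\pi$ is a Laplace transform: this is exactly what the paper does in part (iii) of the proof of Theorem~\ref{theoNotBijectivity}, where $g(z)=\int_0^{\infty(\varphi)}e^{-zt}(f(t)-t)\,dt$ is holomorphic on $S_{\ga+1}$ and satisfies $|g(z)|\lesssim A^pp!M_p|z|^{-p}$, after which $w\mapsto w^{\ga+1}$ lands in the half-plane with the correct Mandelbrojt bounds for $\widehat{\M}$. For the converse implication one needs the inverse (Borel-type) construction. So if you ever want to replace the citation by a proof, follow that Laplace/Borel scheme rather than a bare change of variables.
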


From Theorem~\ref{theoMandelbrojtUniformAsympt} one may deduce the following partial generalization of Watson's Lemma for nonuniform asymptotics, included in~\cite{JimenezSanzSRSPO}; although in that paper strongly regular sequences are  mainly considered, the proof given for this result is valid for general weight sequences, so we omit it here.

\begin{theo}[\cite{JimenezSanzSRSPO},\ Theorem\ 2.19]\label{theoPartialGenerWatsonLemma}
Let $\M$ be a weight sequence, $\ga>0$ and $G_\ga$ be any sectorial region of opening $\pi\ga$. The following statements hold:
\begin{itemize}
\item[(i)] If $\ga>\o(\M)$, then $\widetilde{\mathcal{B}}:\widetilde{\mathcal{A}}_{\M}(G_{\gamma}) \longrightarrow \C[[z]] _\M$ is injective.
\item[(ii)] If $\ga<\o(\M)$, then $\widetilde{\mathcal{B}}:\widetilde{\mathcal{A}}_{\M}(G_{\gamma}) \longrightarrow \C[[z]] _\M$ is not injective.
\end{itemize}
\end{theo}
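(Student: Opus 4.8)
The plan is to reduce the statement for the nonuniform class $\widetilde{\mathcal{A}}_{\M}(G_\ga)$ to the already-known uniform case (Theorem~\ref{theoMandelbrojtUniformAsympt}) plus the construction of flat functions on sectors.

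\medskip

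\noindent\textbf{Part (i): injectivity for $\ga>\o(\M)$.}
Suppose $f\in\widetilde{\mathcal{A}}_{\M}(G_\ga)$ is flat; we must show $f\equiv 0$. Since $\ga>\o(\M)$, pick $\ga'$ with $\o(\M)<\ga'<\ga$ and a bounded proper subsector $T$ of $G_\ga$ of opening $\pi\ga'$. By Proposition~\ref{propcotaderidesaasin}.(ii), $f|_T\in\mathcal{A}_{\M,A_T}(T)$ for some $A_T>0$, and flatness passes to $T$. Now use Proposition~\ref{teorcaracfuncplanaAMS}: flatness on $T$ gives, on a slightly smaller subsector $T'$, a bound $|f(z)|\le c_1 h_{\M}(c_2|z|)$, i.e. $|f(z)|\le c_1 e^{-\o_{\M}(1/(c_2|z|))}$. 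The change of variable $z\mapsto w=1/z^{1/\ga'}$ sends $T'$ into a half-plane-like region $\{\Re(w)>b\}$ (for suitable $b\ge 0$, shrinking $T'$ if necessary so its opening is just under $\pi\ga'$ hence its image has opening just under $\pi$) and, as recorded in the excerpt right after Theorem~\ref{theo.Mandelbrojt}, a function flat on a sector $S_{\ga'}$ corresponds under this substitution exactly to a function $g$ holomorphic on $H_b$ satisfying $|g(w)|\le C A^p M_p/|w|^{\ga' p}$ for all $p$. Since $\ga'>\o(\M)$, Theorem~\ref{theoMandelbrojtUniformAsympt} (equivalently, the divergence of $\sum (m_p)^{-1/\ga'}$, which holds because $\ga'>\o(\M)=1/\lambda_{(m_p)}$) forces $g\equiv 0$, hence $f\equiv 0$ on $T'$, hence on $G_\ga$ by the identity principle.

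\medskip

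\noindent\textbf{Part (ii): non-injectivity for $\ga<\o(\M)$.}
Here one must produce a nontrivial flat function in $\widetilde{\mathcal{A}}_{\M}(G_\ga)$. Since $\ga<\o(\M)$, choose $\ga'$ with $\ga<\ga'<\o(\M)$; then $\sum (m_p)^{-1/\ga'}<\infty$, so by Theorem~\ref{theoMandelbrojtUniformAsympt} the map $\widetilde{\mathcal{B}}\colon\widetilde{\mathcal{A}}^u_{\M}(S_{\ga'})\to\C[[z]]_\M$ is \emph{not} injective, i.e. there is a nontrivial flat $g\in\widetilde{\mathcal{A}}^u_{\M}(S_{\ga'})$. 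Because $\ga<\ga'$, any bounded sectorial region $G_\ga$ can be embedded (after a rotation, and shrinking in radius) as a proper subsector of $S_{\ga'}$; restricting $g$ there gives a nontrivial element of $\widetilde{\mathcal{A}}^u_{\M}(G_\ga)\subseteq\widetilde{\mathcal{A}}_{\M}(G_\ga)$ that is still flat and not identically zero (identity principle again). Hence $\widetilde{\mathcal{B}}$ on $\widetilde{\mathcal{A}}_{\M}(G_\ga)$ is not injective.

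\medskip

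\noindent\textbf{Main obstacle.}
The delicate point is the bookkeeping in Part (i): one needs the implication ``flat on a subsector of opening $\pi\ga'$ with $\ga'>\o(\M)$ $\Rightarrow$ zero'' to actually follow from Mandelbrojt's half-plane theorem, which requires opening $<\pi$ after the conformal change $w=z^{-1/\ga'}$; this is why one works with a \emph{proper} subsector $T$ (opening strictly less than $\pi\ga$) and why $\ga>\o(\M)$ strictly, rather than $\ge$, is what makes the argument clean. One also needs to be slightly careful that Proposition~\ref{teorcaracfuncplanaAMS} yields the exponential bound on a subsector and that this bound is exactly of the form to which Theorem~\ref{theo.Mandelbrojt} applies; tracking the constants $b$, $A$, $C$ through the substitution is routine but must be done. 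Part (ii) has no real obstacle beyond the elementary embedding of a small sectorial region into $S_{\ga'}$.
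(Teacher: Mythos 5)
Your proposal follows exactly the route the paper intends: the paper itself omits the proof of this theorem, citing \cite{JimenezSanzSRSPO} and remarking only that it ``may be deduced from Theorem~\ref{theoMandelbrojtUniformAsympt}'', and your reduction to the uniform case via restriction to proper subsectors (for (i)) and restriction of a flat function from a larger sector (for (ii)) is precisely that deduction. Part (ii) is fine as it stands --- in fact no rotation or shrinking is needed, since $G_\ga\subset S_\ga\subset S_{\ga'}$ already and the uniform flatness estimates on $S_{\ga'}$ restrict verbatim.

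There is one slip in the bookkeeping of Part (i), easily repaired but worth flagging because as written the step fails: you shrink $T'$ so that its opening is \emph{just under} $\pi\ga'$ and then send it by $w=1/z^{1/\ga'}$ to a region of opening just under $\pi$, which you call ``half-plane-like''. A truncated sector of opening strictly less than $\pi$ does \emph{not} contain any half-plane $H_b$, so Theorem~\ref{theo.Mandelbrojt} cannot be applied to it. The correct arrangement is the one in Remark~\ref{remaQuasianalUniformAsymptBoundedSectors}: match the exponent of the ramification to the actual opening of the subsector. Concretely, choose $\ga''$ with $\o(\M)<\ga''<\ga$ and a bounded proper subsector $T'=S(0,\ga'',r)\ll G_\ga$ of opening \emph{exactly} $\pi\ga''$; then $w\mapsto 1/w^{\ga''}$ maps $H_b$ with $b=r^{-1/\ga''}$ into $T'$, the flatness bound $|f(z)|\le c_1h_{\M}(c_2|z|)\le c_1c_2^pM_p|z|^p$ transfers to $|g(w)|\le c_1c_2^pM_p|w|^{-\ga'' p}$ on $H_b$, and since $\ga''>\o(\M)$ forces $\sum_p (m_p)^{-1/\ga''}=\infty$, Mandelbrojt's theorem gives $g\equiv0$ and hence $f\equiv0$ by the identity principle. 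With that one-line correction your argument is complete and coincides with the omitted proof.
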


\begin{rema}
For any weight sequence $\M$, the information from the previous results can be summarized as follows:
\begin{enumerate}[(i)]
	\item If $\o(\M)=\infty$, by Theorem~\ref{theoSalinas}, we see  that $I_{\M} =\emptyset$ and~\eqref{equaContentionInjectIntervals} implies  $ I_{\M} = \widetilde{I}^u_{\M}= \widetilde{I}_{\M}=\emptyset$.
	\item If $\o(\M)=0$, by Theorem~\ref{theoPartialGenerWatsonLemma} we observe that $\widetilde{I}_{\M} =(0,\infty)$ and, by~\eqref{equaContentionInjectIntervals},  we have that $ I_{\M} = \widetilde{I}^u_{\M}= \widetilde{I}_{\M}=(0,\infty)$.
	\item If $\o(\M)\in(0,\infty)$, we have the situation described in Table~\ref{tableInjectivity}, where $\sum_{p=0}^{\infty} \sigma_p$ denotes the series $\sum_{p=0}^{\infty} \left((p+1)m_{p}\right)^{-1/(\o(\M)+1)}$  and $\sum_{p=0}^{\infty} \left(m_{p} \right)^{-1/\o(\M)}$ is abbreviated to $\sum_{p=0}^{\infty} \mu_p$ (note that $\sum_{p=0}^{\infty} \sigma_p<\infty$ implies $\sum_{p=0}^{\infty} \mu_p<\infty$ by applying Theorems~\ref{theoMandelbrojtUniformAsympt} and~\ref{theoSalinas} and using that $\mathcal{A}_{\M}(S_{\ga})\subseteq \widetilde{\mathcal{A}}^u_{\M}(S_{\ga})$ ).
 \begin{table}[h]
\centering
\begin{tabular}{l|l|l|l|}
\cline{2-4}

               \rule{0pt}{1.3\normalbaselineskip}
               &  {$\sum_{p=0}^{\infty} \sigma_p=\infty$}
                       & {$\sum_{p=0}^{\infty} \sigma_p=\infty$}
                       & {$\sum_{p=0}^{\infty} \sigma_p<\infty$} \\
                \rule{0pt}{1.3\normalbaselineskip}
                      & {$\sum_{p=0}^{\infty} \mu_p=\infty$}
                       & {$\sum_{p=0}^{\infty} \mu_p<\infty$}
                       & {$\sum_{p=0}^{\infty} \mu_p<\infty$}                                      \\ \hline
 \multicolumn{1}{|l|}{$I_{\M}$}  & $[\omega(\M),\infty)$                        & $[\omega(\M),\infty)$                        & $(\omega(\M),\infty)$                 \rule{0pt}{1.1\normalbaselineskip}         \\ \hline
  \multicolumn{1}{|l|}{$\widetilde{I}^u_{\M}$}& $[\omega(\M),\infty)$                        & $(\omega(\M),\infty)$                        & $(\omega(\M),\infty)$        \rule{0pt}{1.1\normalbaselineskip}               \\ \hline
  \multicolumn{1}{|l|}{$\widetilde{I}_{\M}$}  & $(\omega(\M),\infty)$ or $[\omega(\M),\infty)$? & $(\omega(\M),\infty)$ &  $(\omega(\M),\infty)$  \rule{0pt}{1.1\normalbaselineskip}\\ \hline
\end{tabular}
\caption{Injectivity intervals for a weight sequence with $\o(\M)\in(0,\infty)$.}
\label{tableInjectivity}
\end{table}

\end{enumerate}
In conclusion, we see that the only injectivity interval not determined by the previous results is $\widetilde{I}_{\M}$, and only when $\o(\M)\in(0,\infty)$ and $\sum_{p=0}^{\infty} \left(m_p\right)^{-1/\o(\M)}=\infty$. Indeed, it only rests to decide whether $\o(\M)\in \widetilde{I}_{\M}$ or not. We will show the existence of nontrivial flat functions in the class $\widetilde{\mathcal{A}}_{\M}(S_{\o(\M)})$, and so one always has $\o(\M)\notin \widetilde{I}_{\M}$ and $\widetilde{I}_{\M}=(\o(\M),\infty)$.
\end{rema}

\begin{exam}\label{examsucesalfabetaomegagamma}
 We consider  the sequence $\M_{\a,\b}=\big(p!^{\a}\prod_{m=0}^p\log^{\b}(e+m)\big)_{p\in\N_0}$, $\a>0$, $\b\in\R$, we have that  $\omega(\M_{\a,\b})=\a$. Hence, Table~\ref{table.inject.intervals.for.MAB} contains all the information about the injectivity intervals deduced from the classical results for the sequences $\M_{\a,\b}$.\par

\begin{table}[!htb]
\centering
\begin{tabular}{l|l|l|l|}
\cline{2-4}
               & $\b\leq\a$
               & $ \a<\b\leq\a+1$
               & $\b>\a+1$ \\ \hline
 \multicolumn{1}{|l|}{$I_{\M_{\a,\b}}$}  & $[\a,\infty)$                        & $[\a,\infty)$                        & $(\a,\infty)$                 \rule{0pt}{1.1\normalbaselineskip}         \\ \hline
  \multicolumn{1}{|l|}{$\widetilde{I}^u_{\M_{\a,\b}}$}& $[\a,\infty)$                        & $(\a,\infty)$                        & $(\a,\infty)$        \rule{0pt}{1.1\normalbaselineskip}               \\ \hline
  \multicolumn{1}{|l|}{$\widetilde{I}_{\M_{\a,\b}}$}  &  $(\a,\infty)$ or $[\a,\infty) $? & $(\a,\infty)$ &  $(\a,\infty)$  \rule{0pt}{1.1\normalbaselineskip}\\ \hline
\end{tabular}
\caption{Injectivity intervals for the sequence $\M_{\a,\b}$ with $\a>0$, $\b\in\R$.}
\label{table.inject.intervals.for.MAB}
\end{table}

Note that even if the Gevrey case $\M_{\a}=\big(p!^{\a}\big)_{p\in\N_0}$ belongs to the first column of
Table~\ref{table.inject.intervals.for.MAB}, all the information is known because the function $f(z):=\exp(-1/z^{1/\a}) \sim_{\M_\a} \widehat{0}$ and
 $f \in \mathcal{\widetilde{A}}_{\M_\a}(S_\a)$, so $\widetilde{I}_{\M_{\a}}=(\a,\infty)$. As mentioned before, we will find such functions for any sequence $\M$ using proximate orders.

\end{exam}

Watson's Lemma will be proved below for the class $\mathcal{\widetilde{A}}_{\M}$ for arbitrary sectorial regions; regarding the other two classes, the following information is available.

\begin{rema}\label{remaQuasianalUniformAsymptBoundedSectors}
Theorem~\ref{theoMandelbrojtUniformAsympt} holds true for bounded sectors $S(0,\ga,r)$ with similar arguments. If $\sum_{p=0}^{\infty} \left(m_{p} \right)^{-1/\ga}<\infty$ the restriction to $S(0,\ga,r)$ of the nontrivial flat function defined in $S_\ga$ given by Theorem~\ref{theoMandelbrojtUniformAsympt} solves the problem. Hence, we only need to prove (ii)$\Rightarrow$(i).

Consider the transformation $z(w)=1/(w + (1/r)^{1/\ga})^{\ga} $, which maps $H_0$ into a region $D$ contained in $S(0,\ga,r)$. Given a flat function $g\in  \mathcal{\widetilde{A}}^u_{\M}(S(0,\ga,r))$, the function $f(w):=g(z(w))$ is defined in $H_0$ and,
since for every $w\in H_0$ we have $|w+(1/r)^{1/\ga}|>|w|$, we deduce that
$$|f(w)|=|g(z(w))|\leq \frac{CA^p M_p}{|(w+(1/r)^{1/\ga})^\ga|^p}\leq \frac{CA^p M_p}{|w|^{\ga p}}, \qquad w\in H_0, \quad p\in \N_0,$$
for suitable $C,A>0$.
By Mandelbrojt's theorem~\ref{theo.Mandelbrojt}, $f$ identically vanishes, and so does $g$.\par\noindent
For more general regions, including sectorial regions, the solution was also given by Mandelbrojt~\cite[Sect.\ 2.4.I]{Mandelbrojt} and the answer depends on the way the boundary of the region approaches the origin.
\end{rema}

\begin{rema}\label{remaTheorSalinasBoundedSectors}
The problem of quasianalyticity for classes of functions with uniformly bounded derivatives in bounded regions has also been treated. In the works of K. V. Trunov and R. S. Yulmukhametov~\cite{TrunovYulmukhametov,Yulmukhametov} a characterization is given, for a convex bounded region containing 0 in its boundary, in terms of the sequence $\M$ and also of the way the boundary approaches 0. In particular, for bounded sectors, if $\gamma\le 1$, $d\in\R$ and $r>0$, it turns out that the class $\mathcal{A}_{\M}(S(d,{\gamma},r))$ is quasianalytic precisely when condition (ii) in Theorem~\ref{theoSalinas} is satisfied.
\end{rema}

Now, our aim will be to construct nontrivial flat functions in $\mathcal{\widetilde{A}}_{\M}(S_{\o(\M)})$, what, according to Proposition~\ref{teorcaracfuncplanaAMS}, amounts to obtaining holomorphic functions in $S_{\o(\M)}$ whose growth is suitably controlled by $\o_{\M}(t)$. The notion of proximate order will play a prominent role in this respect.

\begin{defi}[\cite{Valiron}]
We say a real function $\ro(t)$, defined on $(c,\infty)$ for some $c\ge 0$, is a \emph{proximate order} if the following hold:
 \begin{enumerate}[(i)]
  \item $\rho(t)$ is continuous and piecewise continuously differentiable in $(c,\infty)$,
  \item $\ro(t) \geq 0$ for every $t>c$,
  \item $\lim_{t \to \infty} \ro(t)=\ro< \infty$,
  \item $\lim_{t  \to \infty} t \ro'(t) \log(t) = 0$.
 \end{enumerate}
In case the limit $\ro>0$, we say that $\ro(t)$ is a \emph{nonzero proximate order}.
\end{defi}

\begin{exam}\label{examProxOrders} The following are proximate orders:
\begin{itemize}
\item[(i)] $\ro_{\a,\b}(t)=\displaystyle\frac{1}{\a}-\frac{\b}{\a}\frac{\log(\log(t))}{\log(t)}$, $\a>0$, $\b\in\R$.
\item[(ii)] $\rho(t)=\rho+\displaystyle\frac{1}{t^\ga}$ and $\rho(t)=\rho+\displaystyle\frac{1}{\log^\ga(t)}$, $\ro\ge 0$, $\ga>0$.
\end{itemize}
\end{exam}

The next result by L. S. Maergoiz is the key for the construction.

\begin{theo}[\cite{Maergoiz}, Thm.\ 2.4]\label{propanalproxorde}
Let $\ro(t)$ be a nonzero proximate order with $\lim_{t \to \infty} \ro(t)=\ro$. For every $\ga>0$ there exists an analytic function $V(z)$ in $S_\ga$ such that:
  \begin{enumerate}[(i)]
  \item  For every $z \in S_\ga$,
 \begin{equation*}
    \lim_{t \to \infty} \frac{V(zt)}{V(t)}= z^{\ro},
  \end{equation*}
uniformly in the compact sets of $S_\ga$ (i.~e. $V$ is \emph{regularly varying} in $S_{\ga}$).
\item $\overline{V(z)}=V(\overline{z})$ for every $z \in S_\ga$ (where, for $z=(|z|,\arg(z))$, we put $\overline{z}=(|z|,-\arg(z))$).
\item $V(t)$ is positive in $(0,\infty)$, strictly increasing and $\lim_{t\to 0}V(t)=0$.
\item The function $t\in\R\to V(e^t)$ is strictly convex (i.e. $V$ is strictly convex relative to $\log(t)$).
\item The function $\log(V(t))$ is strictly concave in $(0,\infty)$.
\item  The function $\log( V(t))/\log(t)$, $t>0$, is a proximate order and $\displaystyle \lim_{t\to\infty} V(t)/t^{\ro(t)}=1$.
    \end{enumerate}
\end{theo}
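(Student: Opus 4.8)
The plan is to realize $V$ as the holomorphic extension, to a horizontal strip, of a slowly oscillating real function encoding $\rho(t)$, the extension being obtained by convolution with an entire kernel; the six properties then drop out of the asymptotics of $\rho$ and of the kernel. First I would pass to the logarithmic variable $x=\log t$, set $\phi_{0}(t):=t^{\rho(t)}$ and $b(x):=\log\phi_{0}(e^{x})-\rho x=x(\rho(e^{x})-\rho)$. Property (iii) of a proximate order gives $b(x)=o(x)$, while a routine estimate of $\int_{e^{x}}^{e^{x+h}}\rho'(s)\,ds$ using (iv) shows that $b$ is \emph{slowly oscillating}: $b(x+h)-b(x)\to0$ as $x\to+\infty$, uniformly for $h$ in compact sets. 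At this stage I would also replace $\rho$ by an equivalent proximate order $\widehat\rho$, defined on all of $(0,\infty)$, smooth, and constant near $0$, so that $t^{\widehat\rho(t)}/t^{\rho(t)}\to1$ and the associated function (still denoted $b$) is $C^{\infty}$ with $|b'|$ and $|b''|$ bounded on all of $\R$ by prescribed small multiples of $\rho$. Such a regularization is classical in the theory of proximate orders and regular variation, but carrying it out while matching the (possibly unbounded) behaviour of $b$ at $+\infty$ and keeping the derivative bounds through the transition region is the step I expect to be the main obstacle; everything afterwards is bookkeeping with the convolution.

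Next I would fix the entire Gaussian kernel $K(w):=\pi^{-1/2}e^{-w^{2}}$, which satisfies $\int_{\R}K=1$, has real Taylor coefficients, decays like a Gaussian along horizontal lines, and obeys $\int_{\R}K^{(k)}(y+i\eta)\,dy=0$ for every $k\ge1$, $\eta\in\R$. Setting $B(w):=\int_{\R}b(x)K(w-x)\,dx$ — an entire function, by the Gaussian decay and the subexponential growth of $b$ — I would define
\[
V(z):=z^{\rho}\exp\bigl(B(\log z)\bigr),\qquad z\in S_{\ga},
\]
with $\log$ the natural branch on $\mathcal{R}$ and $z^{\rho}:=\exp(\rho\log z)$; this is holomorphic on $S_{\ga}$. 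Writing $w=\xi+i\eta$ and using the vanishing of $\int K^{(k)}(y+i\eta)\,dy$ for $k\ge1$, one rewrites
\[
B^{(k)}(\xi+i\eta)=\int_{\R}\bigl(b(\xi-y)-b(\xi)\bigr)\,K^{(k)}(y+i\eta)\,dy ,
\]
so that slow oscillation of $b$ together with dominated convergence gives $B^{(k)}(\xi+i\eta)\to0$ as $\xi\to+\infty$, uniformly for $|\eta|\le\ga\pi/2$; the same computation with $k=0$ gives $B(\xi)-b(\xi)\to0$ as $\xi\to+\infty$, and from the left tail one gets that $B(x)$ has a finite limit as $x\to-\infty$.

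Finally I would read off the six properties. For (i), since $\log(zt)=\log z+\log t$ for $t>0$ one has $V(zt)/V(t)=z^{\rho}\exp\bigl(\log z\int_{0}^{1}B'(\log t+s\log z)\,ds\bigr)$, which tends to $z^{\rho}$ uniformly on compact subsets of $S_{\ga}$ by the uniform decay of $B'$ in strips; for (ii), $\overline{B(w)}=B(\bar w)$ (as $b$ is real and $K$ has real coefficients) and $\overline{z^{\rho}}=\bar z^{\rho}$ give the symmetry. For (vi), on $(0,\infty)$ one has $V(t)/t^{\rho(t)}=e^{B(\log t)-b(\log t)}\to1$, and $\rho^{*}(t):=\log V(t)/\log t=\rho+B(\log t)/\log t$ is continuous, piecewise continuously differentiable, tends to $\rho$ (because $B(x)/x\to0$), and satisfies $t(\rho^{*})'(t)\log t=B'(\log t)-B(\log t)/\log t\to0$, hence is a proximate order. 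For (iii)–(v), with $x=\log t$ direct differentiation gives $\tfrac{d}{dt}\log V(t)=(\rho+B'(x))/t$, $\tfrac{d^{2}}{dt^{2}}\log V(t)=(B''(x)-\rho-B'(x))/t^{2}$ and $\tfrac{d^{2}}{dx^{2}}V(e^{x})=e^{\rho x+B(x)}\bigl((\rho+B'(x))^{2}+B''(x)\bigr)$; since convolution with a probability kernel does not enlarge the sup-norms of $b'$ and $b''$, the global smallness of $|b'|,|b''|$ secured in the regularization forces the first expression to be positive, the second negative and the third positive throughout $(0,\infty)$, while the finite limit of $B(x)$ at $-\infty$ yields $\lim_{t\to0}V(t)=0$. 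A viable alternative to the Gaussian is the Poisson kernel of the strip $\{|\Im w|<\ga\pi/2\}$, which furnishes the holomorphic extension and the maximum-principle bounds needed for (i)–(ii) in one stroke; in either case the only genuinely delicate ingredient is the regularization of the proximate order in the first step.
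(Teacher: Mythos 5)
A preliminary remark: the paper does not prove this statement at all --- it is imported verbatim from Maergoiz \cite{Maergoiz} and used as a black box --- so there is no ``paper's own proof'' to set your argument against; I can only judge your sketch on its own terms. The overall strategy (write $V(z)=z^{\rho}\exp(B(\log z))$ with $B$ an entire smoothing, by convolution with a Gaussian, of the slowly oscillating function $b(x)=x(\rho(e^{x})-\rho)$) is viable, and most of the verifications you outline are correct: the identity $B^{(k)}(\xi+i\eta)=\int_{\R}(b(\xi-y)-b(\xi))K^{(k)}(y+i\eta)\,dy$ together with slow oscillation and the growth bound $b(x)=o(x)$ does yield (i), (ii) and (vi) (for (vi) one must also note that $\log V(t)/\log t$ is singular at $t=1$ unless $V(1)=1$, so the claim has to be read with the convention that a proximate order need only be defined on some $(c,\infty)$), and your formulas for the derivatives correctly reduce (iii)--(v) to global smallness of $B'$ and $B''$ on the real axis.

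The genuine gap is in the regularization step, and it is slightly worse than the ``technical obstacle'' you describe: as formulated it is internally inconsistent. You need $\widehat\rho$ to coincide with $\rho$ near $+\infty$ (otherwise the limit $V(t)/t^{\rho(t)}\to 1$ in (vi), with the \emph{original} $\rho$, is not clear), while simultaneously demanding that $|b''|$ be globally small; but a proximate order is only piecewise continuously differentiable, so on the region where $\widehat\rho=\rho$ the second derivative $b''$ need not exist, and the assertion that ``convolution with a probability kernel does not enlarge the sup-norms of $b'$ and $b''$'' has nothing to act on. The repair is to drop the $C^{2}$ requirement altogether: it suffices to build $\widehat b$ globally Lipschitz with constant $\le\varepsilon$, identically $0$ near $-\infty$ and equal to $b$ on some $[x_{1},\infty)$. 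This is possible because $b'(x)=(\rho(e^{x})-\rho)+t\rho'(t)\log t|_{t=e^{x}}\to 0$ almost everywhere by conditions (iii) and (iv) of the definition of proximate order, and $b(x_{1})=o(x_{1})$, so a long linear interpolation over $[x_{0},x_{1}]$ with $x_{0}\ll 0$ has arbitrarily small slope. One then bounds $B''(\xi)=\int_{\R}(\widehat b(\xi-y)-\widehat b(\xi))K''(y)\,dy$ by $\varepsilon\int_{\R}|y|\,|K''(y)|\,dy$ directly from the Lipschitz estimate, rather than from a nonexistent $\|b''\|_{\infty}$, and similarly checks that $(\widehat b*K)(x)-b(x)\to 0$ (not merely $O(\varepsilon)$) as $x\to+\infty$ by splitting the integral into $|y|\le R$ (slow oscillation) and $|y|>R$ (Lipschitz bound against the Gaussian tail). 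With these corrections your construction does deliver all six properties, by an argument that is self-contained and quite possibly simpler than Maergoiz's original one.
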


We denote by $MF(\ga,\ro(t))$ the class of such functions $V$.
As a consequence of its regular variation, they share a property that will be crucial.

\begin{prop}[\cite{Maergoiz}, Property\ 2.9]\label{propcotaVpartereal}
 Let $\ro(t)$ be a nonzero proximate order with $\lim_{t \to \infty} \ro(t)=\ro>0$, $\ga\ge 2/\ro$ and $V\in MF(\ga, \ro(r))$.
 Then, for every $\a\in(0,1/\ro)$ there exist constants $b>0$ and $R_0>0$ such that
 \begin{equation*}
  \Re(V(z)) \ge b V(|z|), \quad  z\in S_{\a},\ |z|\ge R_0,
 \end{equation*}
 where $\Re$ stands for the real part.
\end{prop}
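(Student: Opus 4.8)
The plan is to exploit the regular variation of $V$ in the sector $S_\ga$ (property (i) in Theorem~\ref{propanalproxorde}) together with the symmetry $\overline{V(z)}=V(\overline z)$ (property (ii)) to control $\Re(V(z))$ from below by $V(|z|)$ when $\arg(z)$ is small. First I would write a generic point of $S_{\a}$ (with $0<\a<1/\ro$) as $z=|z|\,e^{i\theta}$ with $|\theta|<\a\pi/2$, and consider the ratio $V(|z|e^{i\theta})/V(|z|)$. By the uniform convergence in property (i), for $t=|z|\to\infty$ this ratio tends to $(e^{i\theta})^{\ro}=e^{i\ro\theta}$, uniformly for $\theta$ in the compact set $[-\a\pi/2,\a\pi/2]$; hence
$$
\Re\!\left(\frac{V(|z|e^{i\theta})}{V(|z|)}\right)\longrightarrow \cos(\ro\theta)
$$
uniformly in $\theta$. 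Since $|\ro\theta|<\ro\a\pi/2<\pi/2$ for $\a<1/\ro$, the quantity $\cos(\ro\theta)$ is bounded below by the positive constant $\cos(\ro\a\pi/2)>0$. Here I use that $V(t)>0$ for $t>0$ (property (iii)), so dividing by $V(|z|)$ is legitimate.

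Combining these two facts: there exists $R_0>0$ such that for $|z|\ge R_0$ and all admissible $\theta$,
$$
\Re\!\left(\frac{V(|z|e^{i\theta})}{V(|z|)}\right)\ge \tfrac12\cos\!\left(\tfrac{\ro\a\pi}{2}\right)=:b>0,
$$
which, after multiplying through by $V(|z|)>0$, is exactly the claimed inequality $\Re(V(z))\ge bV(|z|)$ for $z\in S_\a$, $|z|\ge R_0$. The hypothesis $\ga\ge 2/\ro$ is used only to guarantee that the interval $(0,1/\ro)$ of admissible half-openings $\a$ is contained in $(0,\ga/2]$, so that $S_\a\subseteq S_\ga$ and $V$ (which is only defined and regularly varying on $S_\ga$) is available on the whole of $S_\a$; without it the statement would be vacuous or ill-posed.

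The only genuinely delicate point is the \emph{uniformity} in $\theta$ of the limit in property (i): the statement there is uniform convergence on compact subsets of $S_\ga$ of the function $z\mapsto V(zt)/V(t)$ as $t\to\infty$, and I must make sure this yields uniformity of $\Re(V(|z|e^{i\theta})/V(|z|))\to\cos(\ro\theta)$ jointly in $|z|$ and $\theta$. This is handled by fixing the \emph{unit} vector $e^{i\theta}$ as the variable ``$z$'' in property (i) and taking $t=|z|$; the compact set $\{e^{i\theta}:|\theta|\le\a\pi/2\}$ lies in $S_\ga$ precisely because $\a\le\ga/2$, and on it the convergence $V(e^{i\theta}t)/V(t)\to(e^{i\theta})^\ro$ is uniform by hypothesis. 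Taking real parts and using continuity of $\Re$ and of $\theta\mapsto\cos(\ro\theta)$ finishes the argument. Everything else is elementary: positivity of $V$ on $(0,\infty)$ and the lower bound $\cos(\ro\a\pi/2)>0$, valid exactly because $\a<1/\ro$.
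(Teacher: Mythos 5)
Your argument is correct and complete: the paper itself gives no proof of this proposition (it is quoted from Maergoiz's Property~2.9), but it explicitly presents the property as ``a consequence of its regular variation,'' which is exactly the route you take --- applying the uniform convergence of $V(wt)/V(t)\to w^{\ro}$ on the compact arc $\{e^{i\theta}:|\theta|\le\a\pi/2\}\subset S_{\ga}$ with $t=|z|$, and using $\a<1/\ro$ to get $\cos(\ro\theta)\ge\cos(\ro\a\pi/2)>0$. The only cosmetic remark is that the symmetry property $\overline{V(z)}=V(\overline z)$ announced at the start is never actually needed, since the uniform bound on the full arc already covers both signs of $\theta$.
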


In~\cite{SanzFlatProxOrder} it was shown how one can construct flat functions in the class $\widetilde{\mathcal{A}}_{\M}(S_{\omega(\M)})$ for  strongly regular sequences such that the auxiliary function $d_{\M}(t):=\log(\o_{\M}(t))/\log(t)$ is a proximate order. In particular, the sequences $\M_{\a,\b}$ satisfy this condition, and so the Table~\ref{table.inject.intervals.for.MAB} can be completed writing $(\a,\infty)$ in its left lower corner. It was also mentioned, see~\cite[Remark\ 4.11]{SanzFlatProxOrder}, that the weaker condition of admissibility of a proximate order (see Theorem~\ref{theoAdmissProxOrder}) is enough.
A better understanding of the connection between proximate orders and sequences has now been achieved, allowing us to extend this last result for arbitrary weight sequences. In fact, the admissibility of a proximate order $\ro(t)$ guarantees that the associated function $\o_\M$ is bounded above and below by a constant times the function $t^{\ro(t)}$. These bounds are needed for most of the results in~\cite{lastramaleksanz2,SanzFlatProxOrder}, but by suitably using the notion of regular variation we will see that the upper bounds are enough for the construction of flat functions.
The existence of a proximate order such that the upper bounds are available is guaranteed for each nonnegative, nondecreasing continuous function of finite upper order by the following classical result.

\begin{theo}[\cite{GoldbergOstrowskii}, Ch.\ 2, Thm.\ 2.1]\label{th.FiniteOrderFuncHasProxOrder}
Let $\o:(a,\infty)\to(0,\infty)$ be a nonnegative, nondecreasing continuous function with $\ro[\o]:=\limsup_{t\to\infty} \log(\o(t))/\log(t)<\infty$. Then, there exists a proximate order $\ro(t)$ with $\lim_{t \to \infty} \ro(t)=\ro[\o]$ such that
\begin{equation*}
\limsup_{t\to\infty} \frac{\o(t)}{t^{\ro(t)}}\in(0,\infty).
\end{equation*}
\end{theo}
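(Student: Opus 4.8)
The plan is to pass to the logarithmic scale and reduce the statement to the construction of a single majorizing curve. We may assume $\o\ge 1$: if $\o$ is bounded then $\rho[\o]=0$ and $\rho(t)\equiv 0$ already works, and otherwise $\o(t)\to\infty$, so we restrict the domain. Put $x=\log t$ and $\Lambda(x):=\log\o(e^{x})$, a nonnegative, nondecreasing, continuous function with $\limsup_{x\to\infty}\Lambda(x)/x=\rho:=\rho[\o]\ge 0$. For any continuous, piecewise continuously differentiable $\Psi:[x_{0},\infty)\to[0,\infty)$ and $\rho(t):=\Psi(\log t)/\log t$ the chain rule gives $t\rho'(t)\log t=\Psi'(\log t)-\Psi(\log t)/\log t$; hence it suffices to produce such a $\Psi$ with (a) $\Psi\ge\Lambda$ on $[x_{0},\infty)$, (b) $\Psi'(x)\to\rho$ as $x\to\infty$, and (c) $\liminf_{x\to\infty}\bigl(\Psi(x)-\Lambda(x)\bigr)<\infty$. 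Indeed, (b) forces $\Psi(x)/x\to\rho$, so $\rho(t)$ is then a proximate order with $\lim_{t\to\infty}\rho(t)=\rho$ and $t\rho'(t)\log t\to 0$ (nonnegativity of $\rho(t)$ is automatic from (a) since $\Lambda\ge 0$); (a) yields $\o(t)\le t^{\rho(t)}$, hence $\limsup_{t\to\infty}\o(t)/t^{\rho(t)}\le 1$; and since $\o(t)/t^{\rho(t)}=e^{\Lambda(\log t)-\Psi(\log t)}$, property (c) gives $\limsup_{t\to\infty}\o(t)/t^{\rho(t)}=e^{-\liminf_{x}(\Psi(x)-\Lambda(x))}>0$.

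To build $\Psi$ I would first choose contact points $x_{n}\uparrow\infty$, growing as fast as convenient, along which $\Lambda(x_{n})/x_{n}\to\rho$ (possible because $\limsup_{x}\Lambda(x)/x=\rho$), and take the piecewise-linear interpolant of the points $(x_{n},\Lambda(x_{n}))$, modified in two ways: one pushes the interpolant up wherever it would dip below $\Lambda$, using \emph{anticipation} — an increase of $\Lambda$ by an amount $h$ occurring near a node is absorbed by letting $\Psi$ reach the larger value already over an interval of length $\gg h$ ending before that place, so the extra slope (of order $h$ divided by that length) is negligible; and one rounds off the locally finitely many corners. Using $\Lambda(x_{n})/x_{n}\to\rho$ together with the anticipatory spreading, the segment slopes tend to $\rho$, which is (b); $\Psi\ge\Lambda$ holds by construction, which is (a); and the contacts $\Psi(x_{n})=\Lambda(x_{n})$ survive, which is (c). In the easy situations this is unnecessary: whenever $\limsup_{x}\bigl(\Lambda(x)-\rho x\bigr)>-\infty$ one may take for $\Psi$ a piecewise-linear regularization of the least concave majorant of $\Lambda$ (which is nondecreasing, has asymptotic slope $\rho$, and touches $\Lambda$ along a sequence tending to infinity), and if $\o(t)\asymp t^{\rho}$ one simply takes $\rho(t)\equiv\rho$.

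The hard part is the upward correction: simultaneously keeping $\Psi$ above $\Lambda$, forcing its slopes to $\rho$, and retaining infinitely many contacts. These demands conflict when $\Lambda$ alternates very steep rises — which drive $\Psi$ upward and jeopardize the slope condition — with long nearly flat stretches where $\Lambda(x)/x$ lies well below $\rho$; on such stretches $\Psi$ must still be of size $\rho x+o(x)$ for $\Psi(x)/x\to\rho$, yet the contacts can only be placed at the scarce points where $\Lambda(x)/x$ returns near $\rho$. The bookkeeping that selects the nodes and the lengths of the anticipatory ramps so that all three properties hold at once is the technical core of the argument; the remaining verifications, and the degenerate cases above, are routine.
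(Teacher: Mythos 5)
The paper does not prove this statement: it is quoted directly from \cite{GoldbergOstrowskii}, Ch.~2, Thm.~2.1, so the only meaningful comparison is with that classical proof. Your reduction is correct and is indeed the standard first step. Passing to $x=\log t$ and $\Lambda(x)=\log\omega(e^{x})$, the identity $t\rho'(t)\log t=\Psi'(\log t)-\Psi(\log t)/\log t$ for $\rho(t)=\Psi(\log t)/\log t$ is right, and your derivation of the conclusion from the three conditions (a) $\Psi\ge\Lambda$, (b) $\Psi'(x)\to\rho$, (c) $\liminf_{x\to\infty}(\Psi(x)-\Lambda(x))<\infty$ is sound, as is the treatment of the degenerate case of bounded $\omega$.

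The problem is that the existence of such a majorant $\Psi$ \emph{is} the theorem, and you do not establish it. You describe a programme --- piecewise-linear interpolation through contact points $x_{n}$ with $\Lambda(x_{n})/x_{n}\to\rho$, corrected upwards by anticipatory ramps --- and then state explicitly that the bookkeeping selecting the nodes and ramp lengths ``is the technical core of the argument''; that core is precisely what a proof must supply. The difficulty you correctly identify is real and is not routine: a rise of $\Lambda$ of height comparable to $\rho x$ can follow a long stretch on which $\Lambda(x)\ll\rho x$, so a ramp of slope $\rho+\eps$ anticipating it must begin a fixed fraction of $x$ earlier; one must then show that the ramps can be scheduled with $\eps\to0$ while $\Psi$ still returns to within bounded distance of $\Lambda$ infinitely often. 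Your assertion that ``the contacts survive'' the upward correction is exactly the point at issue and is given no justification; likewise the claim that the least concave majorant touches $\Lambda$ along a sequence tending to infinity in the ``easy case'' is plausible but unverified. The classical argument of Valiron, reproduced in \cite{GoldbergOstrowskii}, resolves this by a different and more explicit device: it distinguishes the cases $\limsup_{t\to\infty}\omega(t)/t^{\rho}=\infty$, $\in(0,\infty)$ and $=0$, and in the nontrivial cases assembles the majorant in the logarithmic plane from arcs of an explicit one-parameter family of curves of the type $y=\rho x\pm\log x+C$ (that is, $\rho(t)=\rho\pm\log\log t/\log t+C/\log t$ on each piece) glued by line segments of slope $\rho$; the explicit form makes both $\Psi'(x)-\Psi(x)/x\to0$ and the recurrence of contact points directly checkable. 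As written, your text is a correct reduction followed by an unproved existence claim, so it does not yet constitute a proof.
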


We have all the ingredients for the main result in this section.

\begin{theo}\label{teorconstrfuncplana}
Suppose $\M$ is a weight sequence with $\o(\M)\in(0,\infty)$. Then, $\o(\M)$ does not belong to $\widetilde{I}_{\M}$.
\end{theo}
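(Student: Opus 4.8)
The goal is to produce a nontrivial flat function in $\widetilde{\mathcal{A}}_{\M}(S_{\o(\M)})$; by Proposition~\ref{teorcaracfuncplanaAMS}, this amounts to exhibiting a holomorphic $f$ on $S_{\o(\M)}$, not identically zero, such that on every $T\ll S_{\o(\M)}$ one has $|f(z)|\le c_1 h_{\M}(c_2|z|)=c_1 e^{-\o_{\M}(1/(c_2|z|))}$ for suitable $c_1,c_2>0$. The strategy is the classical one: build $f$ of the form $f(z)=\exp(-V(1/z))$ (or a ramified variant), where $V$ is a Maergoiz function attached to a proximate order that dominates $\o_{\M}$. First I would set $\o:=\o_{\M}$ and compute its upper order; by \eqref{equaOmegaMExpoConver} one has $\ro[\o_\M]=\limsup_{t\to\infty}\log(\o_\M(t))/\log(t)=1/\o(\M)\in(0,\infty)$ (this follows from the piecewise-linear description of $\o_\M$ together with $\o(\M)=1/\lambda_{(m_p)}$; a short argument comparing $\o_\M(m_p)=\sum_{j<p}\log(m_p/m_j)$ with $p\log m_p$ is needed here, but it is routine). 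Then Theorem~\ref{th.FiniteOrderFuncHasProxOrder} furnishes a proximate order $\ro(t)$ with $\ro:=\lim_{t\to\infty}\ro(t)=1/\o(\M)>0$ and $\limsup_{t\to\infty}\o_\M(t)/t^{\ro(t)}=:K\in(0,\infty)$; in particular there is $A>0$ with $\o_\M(t)\le A\, t^{\ro(t)}$ for all $t$ large, hence (adjusting the constant) for all $t>0$.

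Next I would invoke Maergoiz's Theorem~\ref{propanalproxorde}: since $\ro(t)$ is a nonzero proximate order, for $\ga$ large enough (say $\ga\ge 2/\ro=2\o(\M)$, to later apply Proposition~\ref{propcotaVpartereal}) there is $V\in MF(\ga,\ro(t))$ which is regularly varying, real on the real axis, positive and increasing on $(0,\infty)$, with $\lim_{t\to\infty}V(t)/t^{\ro(t)}=1$. The candidate flat function is $f(z):=\exp\big(-V(1/z)\big)$, defined and holomorphic on $S_\ga$, hence on $S_{\o(\M)}$ (note $\o(\M)$ may be smaller than the $\ga$ needed; but $S_{\o(\M)}\subset S_\ga$ only if $\o(\M)\le\ga$, which holds by our choice, so $f$ restricts to $S_{\o(\M)}$). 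It is clearly not identically zero. The flatness estimate: fix $T\ll S_{\o(\M)}$; $T$ has opening $\pi\a$ for some $\a<\o(\M)\le 1/\ro$, so by Proposition~\ref{propcotaVpartereal} there are $b>0$, $R_0>0$ with $\Re V(w)\ge b\,V(|w|)$ for $w\in S_\a$, $|w|\ge R_0$. Therefore, for $z\in T$ with $|z|$ small ($|1/z|\ge R_0$), $|f(z)|=e^{-\Re V(1/z)}\le e^{-bV(1/|z|)}$. Finally I compare $V(1/|z|)$ with $\o_\M(1/(c_2|z|))$: using $V(t)\sim t^{\ro(t)}$ and the upper bound $\o_\M(t)\le A t^{\ro(t)}$ together with a scaling property of $t^{\ro(t)}$ (for a proximate order, $(Ct)^{\ro(Ct)}\le C' t^{\ro(t)}$ for $t$ large, any fixed $C$), one gets $V(1/|z|)\ge c\,\o_\M(c_2/|z|)$ for suitable $c,c_2>0$ on $T$ near the origin, whence $|f(z)|\le e^{-bc\,\o_\M(c_2/|z|)}$. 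Re-inserting the missing finitely many $z$ (those not small) only affects $c_1$, since on the compact part $f$ is bounded and $h_\M$ is bounded below away from $0$.

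There is one genuine subtlety I must address: the opening. Maergoiz's construction works on an \emph{arbitrarily large} sector $S_\ga$, but Proposition~\ref{propcotaVpartereal} only gives the lower bound on $\Re V$ on subsectors $S_\a$ with $\a<1/\ro=\o(\M)$ — and this is exactly the opening we want, since any $T\ll S_{\o(\M)}$ indeed has opening strictly less than $\pi\o(\M)$. So the opening matches perfectly; no ramification trick is needed here (unlike in the surjectivity sections), because we are constructing a flat function, not inverting the Borel map, and the construction delivers the \emph{maximal} opening $\pi\o(\M)$ for free. Once this flat $f$ is in hand, $\widetilde{\mathcal{B}}$ on $\widetilde{\mathcal{A}}_{\M}(S_{\o(\M)})$ is not injective, i.e. $\o(\M)\notin\widetilde{I}_\M$, which is the claim. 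I expect the main obstacle to be the bookkeeping around $t^{\ro(t)}$: making precise that the upper bound $\o_\M(t)\le A t^{\ro(t)}$, the asymptotic $V(t)\sim t^{\ro(t)}$, and the quasi-homogeneity of $t^{\ro(t)}$ under scaling combine to the required two-sided control $e^{-c_1'\o_\M(c_2'/|z|)}\le |f(z)|$ is \emph{not} needed (only the upper estimate on $|f|$), which simplifies matters; still, one must be careful that it is the \emph{upper} bound on $\o_\M$ (equivalently the \emph{lower} bound on $h_\M$) that is used, and that Theorem~\ref{th.FiniteOrderFuncHasProxOrder} supplies precisely that. A clean way to organize the comparison is to first record the elementary lemma that for a proximate order $\ro(t)$ with limit $\ro>0$ and any $\lambda>0$, $\lim_{t\to\infty}(\lambda t)^{\ro(\lambda t)}/t^{\ro(t)}=\lambda^{\ro}$, and then chain the inequalities; I would isolate this as a preliminary step to keep the main argument transparent.
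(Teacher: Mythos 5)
Your proposal is correct and follows essentially the same route as the paper: compute $\ro[\o_{\M}]=1/\o(\M)$, obtain a nonzero proximate order with $\o_{\M}(t)\le A\,t^{\ro(t)}$ via Theorem~\ref{th.FiniteOrderFuncHasProxOrder}, take $V\in MF(2\o(\M),\ro(t))$, set $f(z)=\exp(-V(1/z))$ on $S_{\o(\M)}$, and combine Proposition~\ref{propcotaVpartereal} with the regular variation of $V$. The only point to tighten is the final comparison: since no (mg) is assumed, an estimate $|f(z)|\le e^{-bc\,\o_{\M}(c_2/|z|)}$ with $bc<1$ would not suffice for Proposition~\ref{teorcaracfuncplanaAMS}, so you must use your scaling lemma to push the coefficient of $\o_{\M}$ in the exponent up to at least $1$ by shrinking the argument --- exactly as the paper does by choosing $c>(A_2/b)^{\o(\M)}$ so that $bV(t)>A_2V(t/c)\ge\o_{\M}(t/c)$.
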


\begin{proof1}
For brevity, put $\o:=\o(\M)$. By Theorem 2.24 in~\cite{SanzLectureNotesBedlewo} (see also Theorem 2.1.30 in~\cite{PhDJimenez}), for the associated function $\o_{\M}$ one has $\ro[\o_{\M}]=1/\o\in(0,\infty)$, and by Theorem~\ref{th.FiniteOrderFuncHasProxOrder} there exists a nonzero proximate order $\ro(t)$, with $\lim_{t\to\infty}\rho(t)=1/\o$, and constants $A_1>0$ and $t_1>0$ such that
\begin{equation}\label{equateorWatsonMhasproxorder}
  \o_{\M}(t)\le A_1t^{\rho(t)},\quad t\ge t_1.
\end{equation}
Take now a function $V\in MF(2\o,\ro(t))$.
The proof will be complete if we show that $G(z):=\exp(-V(1/z))$, which is well defined and holomorphic in the sector $S_{\o}$, belongs to $\widetilde{\mathcal{A}}_{\M}(S_{\o})$ and it is flat, for what we will use Proposition~\ref{teorcaracfuncplanaAMS}. It is enough to work in subsectors $S(0,\b,r_0)\ll S_{\o}$, where $0<\b<\o$ and $r_0>0$. If $z\in S(0,\b,r_0)$, we have $1/z\in S_{\b}$.
On the one hand, according to $(vi)$ in Theorem~\ref{propanalproxorde}, combined with~\eqref{equateorWatsonMhasproxorder}, there exist $A_2>0$ and $t_2>0$ such that
\begin{equation}\label{equateorWatsonMcontrolledV}
  \o_{\M}(t)\le A_2V(t),\quad t\ge t_2.
\end{equation}
On the other hand, Proposition~\ref{propcotaVpartereal} provides us with constants $b>0$ and $R_0>0$ such that
\begin{equation}
  \label{equateorWatsonRealpartV}
  \Re(V(\zeta)) \ge b V(|\zeta|),\qquad \zeta\in S_{\b},\ |\zeta|\ge R_0.
\end{equation}
Choose a positive constant $c$ such that $c>(A_2/b)^{\o}$. By property $(i)$ in Theorem~\ref{propanalproxorde} we have
$$
\lim_{t\to\infty}\frac{V(t/c)}{V(t)}=\left(\frac{1}{c}\right)^{1/\o} <\frac{b}{A_2},
$$
so that there exists $R_1>0$ such that
\begin{equation}\label{equateorWatsonVtcontrolledVtc}
bV(t)> A_2V(t/c),\quad t\ge R_1.
\end{equation}
Let $R_2:=\max(R_0,R_1,ct_2)$ and $r:=R_2^{-1}$. Then, using~\eqref{equateorWatsonRealpartV}, \eqref{equateorWatsonVtcontrolledVtc} and~\eqref{equateorWatsonMcontrolledV}, for $z\in S(0,\b,r)$ we have
$$
-\Re(V(1/z))\le -bV(1/|z|)<-A_2V(1/(c|z|))\le -\o_{\M}(1/(c|z|)),
$$
and so
$$
|G(z)|=e^{-\Re(V(1/z))}\le e^{-\o_{\M}(1/(c|z|))}.
$$
We are done whenever $r\ge r_0$. Otherwise, by compactness there exists $K>0$ such that the inequality
$$
|G(z)|\le Ke^{-\o_{\M}(1/(c|z|))}
$$
is valid throughout $S(0,\b,r_0)$.
\end{proof1}

So, the question mark in Table~\ref{tableInjectivity} can be deleted and the answer for that cell is $(\o(\M),\infty)$, what completes the study of injectivity for unbounded sectors.

Since flat functions in $S_\ga$ provide (by restriction) flat functions in any sectorial region $G_\ga$ of opening $\pi\ga$, Theorems~\ref{theoPartialGenerWatsonLemma} and~\ref{teorconstrfuncplana} imply the following result.

\begin{coro}[Generalized Watson's Lemma for sectorial regions]
  \label{coroGenerWatsonLemmaSectorialRegions}
  Let $\M$ be a weight sequence, $\ga>0$ and $G_\ga$ be a sectorial region. The following statements are equivalent:
\begin{itemize}
\item[(i)] The Borel map $\widetilde{\mathcal{B}}:\widetilde{\mathcal{A}}_{\M}(G_{\gamma}) \longrightarrow \C[[z]] _\M$ is injective.
\item[(ii)] $\ga>\o(\M)$.
\end{itemize}
\end{coro}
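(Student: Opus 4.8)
The plan is to combine the partial generalized Watson's Lemma (Theorem~\ref{theoPartialGenerWatsonLemma}) with the flat-function construction of Theorem~\ref{teorconstrfuncplana}, the only case needing separate attention being the boundary opening $\ga=\o(\M)$.

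First I would settle (ii)$\Rightarrow$(i): if $\ga>\o(\M)$, then part~(i) of Theorem~\ref{theoPartialGenerWatsonLemma} gives directly that $\widetilde{\mathcal{B}}:\widetilde{\mathcal{A}}_{\M}(G_{\gamma})\to\C[[z]]_{\M}$ is injective. For (i)$\Rightarrow$(ii) I argue contrapositively: assume $\ga\le\o(\M)$ and produce a nontrivial flat function in $\widetilde{\mathcal{A}}_{\M}(G_{\gamma})$. If $\ga<\o(\M)$ --- which in particular covers every $\ga>0$ when $\o(\M)=\infty$ --- part~(ii) of Theorem~\ref{theoPartialGenerWatsonLemma} already yields non-injectivity. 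The remaining possibility is $\ga=\o(\M)$; since $\ga>0$ and the case $\o(\M)=\infty$ was just dealt with, this forces $\o(\M)\in(0,\infty)$ (and $\o(\M)=0$ is vacuous, there being no such positive $\ga$). Now Theorem~\ref{teorconstrfuncplana} applies: $\o(\M)\notin\widetilde{I}_{\M}$, so there is a nontrivial flat $f\in\widetilde{\mathcal{A}}_{\M}(S_{\o(\M)})$ --- concretely the function $G(z)=\exp(-V(1/z))$ built in its proof. Since any sectorial region $G_{\o(\M)}$ of opening $\pi\o(\M)$ satisfies $G_{\o(\M)}\subset S(0,\o(\M))=S_{\o(\M)}$, the restriction $f|_{G_{\o(\M)}}$ is holomorphic on $G_{\o(\M)}$, and every $T\ll G_{\o(\M)}$ is also a bounded proper subsector of $S_{\o(\M)}$, so by Proposition~\ref{teorcaracfuncplanaAMS} the restriction lies in $\widetilde{\mathcal{A}}_{\M}(G_{\o(\M)})$ and is still flat. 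It is not identically zero, for otherwise the identity principle would force $f\equiv 0$ on all of $S_{\o(\M)}$; hence $\widetilde{\mathcal{B}}$ fails to be injective on $\widetilde{\mathcal{A}}_{\M}(G_{\o(\M)})$, which establishes the contrapositive of (i)$\Rightarrow$(ii).

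There is essentially no obstacle: the statement merely packages two results already proved in this section. The one point worth spelling out carefully --- and which I have done above --- is that Theorem~\ref{teorconstrfuncplana} and the injectivity set $\widetilde{I}_{\M}$ are phrased for the full sectors $S_\ga$, whereas the corollary concerns arbitrary sectorial regions $G_\ga$; this causes no trouble precisely because the flat function is produced on the whole sector $S_{\o(\M)}$, and passing to the smaller region $G_{\o(\M)}\subset S_{\o(\M)}$ preserves both membership in the class and nontriviality.
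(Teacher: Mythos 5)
Your proposal is correct and follows exactly the paper's intended argument: the implication (ii)$\Rightarrow$(i) and the case $\ga<\o(\M)$ come from Theorem~\ref{theoPartialGenerWatsonLemma}, while the boundary case $\ga=\o(\M)$ is settled by restricting the nontrivial flat function of Theorem~\ref{teorconstrfuncplana} from $S_{\o(\M)}$ to the sectorial region $G_{\o(\M)}$. Your explicit handling of the degenerate cases $\o(\M)=0$ and $\o(\M)=\infty$ and of the passage from sectors to sectorial regions merely spells out what the paper leaves implicit.
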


We close this section proving that the Borel map is never bijective in this framework.

\begin{theo}\label{theoNotBijectivity}
Let $\M$ be a weight sequence. Then,
$$ S_{\M}\cap I_{\M} = \widetilde{S}^u_{\M}\cap \widetilde{I}^u_{\M}  = \widetilde{S}_{\M}\cap \widetilde{I}_{\M}=\emptyset. $$
In other words, the Borel map is never bijective.
\end{theo}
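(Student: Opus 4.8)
The plan is to show that for any weight sequence, the surjectivity interval and the injectivity interval for each of the three classes are disjoint, which amounts to proving that one cannot simultaneously have injectivity and surjectivity of the Borel map on a fixed sector $S_\ga$. By the inclusions \eqref{equaContentionInjectIntervals} and \eqref{equaContentionSurjectIntervals}, together with Proposition~\ref{propcotaderidesaasin}.(i), it suffices to prove $\widetilde{S}_{\M}\cap\widetilde{I}_{\M}=\emptyset$; indeed, if $\ga\in S_{\M}\cap I_{\M}$ then a moment's thought shows the analogous statement for $\widetilde{I}_{\M}$ fails unless we argue more carefully, so in fact the cleanest route is to handle the three cases in parallel: in each, \emph{bijectivity} of $\widetilde{\mathcal{B}}$ on $S_\ga$ would make $\widetilde{\mathcal{B}}$ a linear isomorphism between the class and $\C[[z]]_{\M}$, and we derive a contradiction.

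First I would recall the dichotomy provided by the injectivity results already in the excerpt. If $\ga\in\widetilde{I}_{\M}$ (the largest of the three injectivity sets), then by Corollary~\ref{coroGenerWatsonLemmaSectorialRegions} we have $\ga>\o(\M)$; in particular $\o(\M)<\infty$. Symmetrically, if $\ga>\o(\M)$ for \emph{all} the relevant $\ga$ in the injectivity sets, one gets a finite value of $\o(\M)$. The key point is then to contradict surjectivity on such a sector. The natural idea is to exhibit a formal power series $\widehat{f}\in\C[[z]]_{\M}$ that cannot be the asymptotic expansion of any function holomorphic on $S_\ga$: take a lacunary series $\widehat{L}\in\C[[z]]_{\M}$ with finite radius of convergence $R$ whose disc of convergence has the circle $|z|=R$ as natural boundary, exactly as in the second paragraph of Remark~\ref{remaTrivialWhenmnotInfty}. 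Its holomorphic sum $L$ near $0$ satisfies $L\sim_\M\widehat{L}$ on any sectorial subregion of $S_\ga$, and since $\widetilde{\mathcal{B}}$ is injective on $S_\ga$, no function in $\widetilde{\mathcal{A}}_{\M}(S_\ga)$ other than $L$ itself can have expansion $\widehat{L}$; but $L$ does not extend holomorphically to all of $S_\ga$ because $S_\ga$ reaches radius $>R$. Hence $\widehat{L}$ is not in the image of $\widetilde{\mathcal{B}}$, contradicting surjectivity.

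One subtlety must be addressed: one needs a lacunary series lying in $\C[[z]]_{\M}$ with \emph{finite} radius of convergence. Since $\M$ is a weight sequence, $(M_p)^{1/p}\to\infty$ by Remark~\ref{remaConseqLC}, so $\C[[z]]_{\M}$ contains series with arbitrarily small (even zero) radius of convergence; a fortiori it contains genuinely convergent ones — e.g. $\sum_p z^{n_p}$ along a sufficiently sparse sequence $n_p$, whose coefficients are bounded by $1\le A^{n_p}M_{n_p}$ for any $A\ge 1$, has radius of convergence $1$ and the unit circle as natural boundary by the Hadamard gap theorem. Rescaling if necessary, we may take $R$ as small as we please, in particular smaller than $\sup\{|z|:z\in S_\ga\}=\infty$, so the argument applies to every unbounded sector $S_\ga$ regardless of $\ga$. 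This settles $\widetilde{S}_{\M}\cap\widetilde{I}_{\M}=\emptyset$, and then $\widetilde{S}^u_{\M}\cap\widetilde{I}^u_{\M}=\emptyset$ and $S_{\M}\cap I_{\M}=\emptyset$ follow because $\mathcal{A}_{\M}(S_\ga)\subseteq\widetilde{\mathcal{A}}^u_{\M}(S_\ga)\subseteq\widetilde{\mathcal{A}}_{\M}(S_\ga)$: if $\widetilde{\mathcal{B}}$ were bijective on the smaller class it would in particular be injective there, hence $\ga$ lies in the injectivity set, while the lacunary-series function $L$ belongs to the class (it is bounded near $0$ and all its derivatives are, in fact $L\in\mathcal{A}_{\M,A}$ for suitable $A$ since $\widehat{L}\in\C[[z]]_{\M}$), giving the same obstruction to surjectivity.

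The main obstacle, modest as it is, is the bookkeeping in the last step: one must make sure that the lacunary sum $L$ actually belongs to the \emph{smallest} class $\mathcal{A}_{\M}(S_\ga)$ (not merely to $\widetilde{\mathcal{A}}_{\M}$), so that the contradiction with surjectivity of $\widetilde{\mathcal{B}}:\mathcal{A}_{\M}(S_\ga)\to\C[[z]]_{\M}$ is legitimate; this follows from Proposition~\ref{propcotaderidesaasin} applied on a small subsector of $S_\ga$ contained in the disc of convergence, together with the fact that derivatives of a convergent power series with coefficients in $\C[[z]]_{\M,A}$ again satisfy Carleman estimates of the form $A'^{\,n}n!M_n$ on a slightly smaller disc (Cauchy estimates plus (lc)). Apart from this, everything reduces to the already-established Watson-type injectivity results and the natural-boundary property of Hadamard gap series.
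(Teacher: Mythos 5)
Your strategy --- pick a lacunary series $\widehat{L}\in\C[[z]]_{\M}$ with natural boundary on $|z|=R$, use surjectivity to produce $E$ in the class on $S_\ga$ with $\widetilde{\mathcal{B}}(E)=\widehat{L}$, and use injectivity to force $E=L$ on $S(0,\ga,R')$, contradicting the impossibility of continuing $L$ across the circle --- is sound for $\widetilde{S}_{\M}\cap\widetilde{I}_{\M}$ and for $\widetilde{S}^u_{\M}\cap\widetilde{I}^u_{\M}$. The point you gloss over, but which can be repaired, is that the identification $E=L$ takes place on the \emph{bounded} truncation $S(0,\ga,R')$, where $L$ lives, so what you actually need is quasianalyticity of the relevant class on that bounded sector, not on $S_\ga$ (injectivity does not automatically pass from a sector to a subregion: a flat function on the truncation need not extend). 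For $\widetilde{\mathcal{A}}_{\M}$ the transfer is available because Corollary~\ref{coroGenerWatsonLemmaSectorialRegions} characterizes injectivity for arbitrary sectorial regions purely by the opening; for $\widetilde{\mathcal{A}}^u_{\M}$ it is available because Remark~\ref{remaQuasianalUniformAsymptBoundedSectors} shows that Mandelbrojt's criterion $\sum_p m_p^{-1/\ga}=\infty$ governs bounded and unbounded sectors alike. For these two classes your argument is a legitimate alternative to the paper's, which for $\widetilde{\mathcal{A}}_{\M}$ uses the pole of $1/(1-z)$ and for $\widetilde{\mathcal{A}}^u_{\M}$ transplants a flat function to a half-plane and invokes Theorem~\ref{theo.Mandelbrojt} directly.

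The genuine gap is in the third identity, $S_{\M}\cap I_{\M}=\emptyset$. Here $\ga\in I_{\M}$ only gives (Theorem~\ref{theoSalinas}) that $\sum_p((p+1)m_p)^{-1/(\ga+1)}=\infty$, and you must rule out a nonzero flat element $E-L$ of $\mathcal{A}_{\M}(S(0,\ga,R'))$. If $\ga>\o(\M)$ you can fall back on Corollary~\ref{coroGenerWatsonLemmaSectorialRegions} through the inclusion $\mathcal{A}_{\M}\subseteq\widetilde{\mathcal{A}}_{\M}$; but in the borderline case $\ga=\o(\M)$ with $\sum_p((p+1)m_p)^{-1/(\o(\M)+1)}=\infty$ and $\sum_p m_p^{-1/\o(\M)}<\infty$ (the middle column of Table~\ref{tableInjectivity}), the classes $\widetilde{\mathcal{A}}_{\M}$ and $\widetilde{\mathcal{A}}^u_{\M}$ on $S(0,\ga,R')$ are \emph{not} quasianalytic, so no inclusion helps, and the only quasianalyticity statement for $\mathcal{A}_{\M}$ on bounded sectors available here (Remark~\ref{remaTheorSalinasBoundedSectors}, resting on Trunov--Yulmukhametov for convex domains) covers only openings $\ga\le1$. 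Your argument therefore breaks down exactly when $\o(\M)>1$ and $\o(\M)\in I_{\M}\setminus\widetilde{I}^u_{\M}$, which is precisely the situation where $S_{\M}\cap I_{\M}=\emptyset$ carries content not already implied by the other two identities. The paper circumvents this with a different device: from $f\in\mathcal{A}_{\M}(S_\ga)$ with $f^{(p)}(0)=\delta_{1,p}$ it forms the Laplace transform of $f(z)-z$ on $S_{\ga+1}$, estimates it via repeated integration by parts, ramifies to a half-plane, and applies Theorem~\ref{theo.Mandelbrojt} to force $\sum_p((p+1)m_p)^{-1/(\ga+1)}<\infty$, contradicting $\ga\in I_{\M}$. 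Some transform of this kind, converting the derivative bounds on $S_\ga$ into sup-norm bounds on a half-plane, seems unavoidable for this third case.
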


\begin{proof1}
In all three cases we will show that surjectivity for any $\ga>0$ implies noninjectivity.

(i) Let us see that $\widetilde{S}_{\M}\cap \widetilde{I}_{\M}=\emptyset$. Suppose $\widetilde{\mathcal{B}}:\widetilde{\mathcal{A}}_{\M}(S_\gamma)\longrightarrow \C[[z]]_{\M}$ is surjective. Since it is clear that the series $\sum_{n=0}^{\infty} z^n$ belongs to $\C[[z]]_{\M}$, there exists $f\in\widetilde{\mathcal{A}}_{\M}(S_\gamma)$ such that
 $f(z)\sim_{\M} \sum_{n=0}^{\infty} z^n$. The function $g(z):=f(z)-\sum_{n=0}^{\infty} z^n=f(z)-1/(1-z)$ is holomorphic in $S_\ga\setminus\{1\}$ and,  by the identity principle, cannot vanish identically. Moreover, $g\in \widetilde{\mathcal{A}}_{\M}(S(0,\gamma,1/2))$ and $g(z)\sim_{\M} \widehat{0}$, and so the Borel map is not injective in $\widetilde{\mathcal{A}}_{\M}(S(0,\gamma,1/2))$.
By Corollary~\ref{coroGenerWatsonLemmaSectorialRegions} we see that $\ga\leq \o(\M)$. Again by Corollary~\ref{coroGenerWatsonLemmaSectorialRegions} we conclude that $\widetilde{\mathcal{B}}:\widetilde{\mathcal{A}}_{\M}(S_\gamma)\longrightarrow \C[[z]]_{\M}$ is not injective.

(ii) Let us see that $\widetilde{S}^u_{\M}\cap \widetilde{I}^u_{\M}=\emptyset$. Suppose $\widetilde{\mathcal{B}}:\widetilde{\mathcal{A}}^u_{\M}(S_\gamma)\longrightarrow \C[[z]]_{\M}$ is surjective. Since $z\in\C[[z]]_{\M}$, there exists $f\in\widetilde{\mathcal{A}}^u_{\M}(S_\gamma)$ such that
 $f(z)\sim_{\M} z$ uniformly in $S_\ga$. The function $g(z):=f(z)-z$ is holomorphic in $S_\ga$ and, since $f$ is bounded in $S_\ga$, cannot vanish identically. Furthermore, $g(z)\sim_{\M} \widehat{0}$ uniformly in $S(0,\ga,1)$, so there exist $C,A>0$ such that
for every $z\in S(0,\ga,1)$ one has
$$
|g(z)|
\le CA^pM_p|z|^p,\qquad p\in\N_0.
$$
Hence, the holomorphic function $\psi:\{z\in\C:\Re(z)>0\}\to\C$, defined by $\psi(u)=g(1/u^\ga)$, is not identically
0 and
$$
|\psi(u)|\le\frac{CA^pM_p}{|u|^{\ga p}},\qquad p\in\N_0,\ \Re(u)> 1.
$$
Now, we can apply Theorem~\ref{theo.Mandelbrojt} in $H_1$ and we deduce that $\sum_{n=0}^\infty m_p^{-1/\ga}<\infty$. By Theorem~\ref{theoMandelbrojtUniformAsympt} we conclude that $\widetilde{\mathcal{B}}:\widetilde{\mathcal{A}}^u_{\M}(S_\gamma)\longrightarrow \C[[z]]_{\M}$ is not injective.

(iii) Finally, let us show that $S_{\M}\cap I_{\M} =\emptyset$. If $\widetilde{\mathcal{B}}:\mathcal{A}_{\M}(S_\gamma)\longrightarrow \C[[z]]_{\M}$ is surjective there exists $f\in\mathcal{A}_{\M}(S_\gamma)$ such that $f^{(p)}(0)=\delta_{1,p}$ for every $p\in\N_0$, where $\delta_{1,p}$ is Kronecker's delta. By definition of the class, there exist $C,A>0$ (without loss of generality, we may assume that $C\ge 1$ and $CAM_1\ge 1$) such that
\begin{equation}
  \label{equaEstimatesNonBijectUniformBoundsDeriv}
  |f^{(p)}(z)|\le CA^pp!M_p,\qquad z\in S_{\ga},\ p\in\N_0.
\end{equation}
We consider the Laplace transform of the function $f(z)-z$,
\begin{equation}\label{equaLaplaceTransformSalinas}
g(z):=\int_0^{\infty(\varphi)} e^{-zt}(f(t)-t)\,dt,\qquad z\in S_{\ga+1},
\end{equation}
where the integration is over the half-line parameterized by $r\in(0,\infty)\mapsto re^{i\varphi}$, whose argument is a real number
\begin{equation}\label{equaLaplTranArgumentHalfline}
\varphi\in\left(-\frac{\pi\ga}{2},\frac{\pi\ga}{2}\right)\ \textrm{ such that }\ \arg(z)+\varphi\in\left(-\frac{\pi}{2},\frac{\pi}{2}\right).
\end{equation}
This last condition guarantees the exponential decrease at infinity of the factor $e^{-zt}$ which, together with the linear growth of $f(t)-t$, ascertains that the function $g$ is well defined and holomorphic in $S_{\ga+1}$.
We proceed now to estimate $|g(z)|$. Firstly, parameterizing we have that
\begin{align*}
  |g(z)|&\le \left|\int_0^\infty e^{-re^{i\varphi}z}f(re^{i\varphi})e^{i\varphi}\,dr-
  \int_0^\infty e^{-re^{i\varphi}z}re^{i\varphi}e^{i\varphi}\,dr\right|\\
  &\le \int_0^\infty e^{-r\Re(e^{i\varphi}z)}|f(re^{i\varphi})|\,dr+
  \left| \int_0^\infty e^{-re^{i\varphi}z}r\,dr\right|.
\end{align*}
In the first integral we use~\eqref{equaEstimatesNonBijectUniformBoundsDeriv} for $p=0$ and compute the remaining integral, and in the second one we integrate by parts, and get that
\begin{align}\label{equaLaplTransFirstEstim}
  |g(z)|&\le \frac{C}{\Re(e^{i\varphi}z)}+\left|\frac{1}{e^{i\varphi}z}\int_0^\infty e^{-re^{i\varphi}z}\,dr\right|\nonumber\\
  &\le \frac{C}{\Re(e^{i\varphi}z)}+\frac{1}{|z|\Re(e^{i\varphi}z)}
\end{align}
for every $z\in S_{\ga+1}$. A different estimation is obtained by integration by parts in~\eqref{equaLaplaceTransformSalinas}, taking into account that $f(0)=0$:
\begin{equation}\label{equaLaplTransFirstIntegrParts}
g(z)=\frac{1}{z}\int_0^{\infty(\varphi)} e^{-zt}(f'(t)-1)\,dt, \qquad z\in S_{\ga+1}.
\end{equation}
Now we parameterize and split the integral as before, and use~\eqref{equaEstimatesNonBijectUniformBoundsDeriv} for $p=1$ to obtain that
\begin{equation}\label{equaLaplTransSecondEstim}
|g(z)|\le \frac{CAM_1}{|z|\Re(e^{i\varphi}z)}+\frac{1}{|z|\Re(e^{i\varphi}z)}\le \frac{2CAM_1}{|z|\Re(e^{i\varphi}z)}.
\end{equation}
Finally, if we iterate the integration by parts in~\eqref{equaLaplTransFirstIntegrParts} and use that $f^{(p)}(0)=\delta_{1,p}$, we get for every $p\ge 2$ the identity
\begin{equation*}
g(z)=\frac{1}{z^p}\int_0^{\infty(\varphi)} e^{-zt}f^{(p)}(t)\,dt, \qquad z\in S_{\ga+1}.
\end{equation*}
Using again~\eqref{equaEstimatesNonBijectUniformBoundsDeriv} for $p\ge 2$ we deduce that
\begin{equation}\label{equaLaplTransThirdEstim}
|g(z)|\le \frac{CA^pp!M_p}{|z|^p\Re(e^{i\varphi}z)}.
\end{equation}
Our aim is to apply Theorem~\ref{theo.Mandelbrojt} to the function $h$ given by $h(w)=g(w^{\ga+1})$, $w\in S_1$, when restricted to the half-plane $\{w:\Re(w)>1\}$. Note that the estimates in~\eqref{equaLaplTransFirstEstim} imply for $\Re(w)>1$ (and so $|w|>1$) that
\begin{equation*}
  |h(w)|\le \frac{C}{\Re(e^{i\varphi}w^{\ga+1})}+\frac{1}{|w^{\ga+1}|\Re(e^{i\varphi}w^{\ga+1})}\le \frac{2C}{\Re(e^{i\varphi}w^{\ga+1})}.
\end{equation*}
These last estimates and the ones in~\eqref{equaLaplTransSecondEstim} and~\eqref{equaLaplTransThirdEstim} can now be summed up for $h$ as
\begin{equation*}
  |h(w)|\le  \frac{2CA^p p! M_p}{|w|^{p(\ga+1)}\Re(e^{i\varphi}w^{\ga+1})},\quad \Re(w)>1,\ p\in\N_0.
\end{equation*}
Now we choose $\varphi$ in order to minimize the value $\Re(e^{i\varphi}w^{\ga+1})$. We study two cases:
\begin{itemize}
\item[(i)] If $|\arg(w)|<\ga\pi/(2(\ga+1))$, then $|\arg(w^{\ga+1})|<\ga\pi/2$ and, according to~\eqref{equaLaplTranArgumentHalfline}, we may choose $\varphi=-\arg(w^{\ga+1})$, and we deduce that $\Re(e^{i\varphi}w^{\ga+1})=|w|^{\ga+1}>1$. So, for such $w$ we get
\begin{equation}\label{equaEstim.h.CentralSector}
  |h(w)|\le  \frac{2CA^p p! M_p}{|w|^{p(\ga+1)}},\quad p\in\N_0.
\end{equation}

\item[(ii)] If $|\arg(w)|<[\ga\pi/(2(\ga+1)),\pi/2)$, the previous choice is not possible, and we choose
$$
\varphi_\eps=\begin{cases}
  -\frac{\ga\pi}{2}+\eps&\quad\textrm{if }\arg(w)\in \left(-\frac{\pi}{2},-\frac{\pi\ga}{2(\ga+1)}\right],\\
  \frac{\ga\pi}{2}-\eps&\quad\textrm{if }\arg(w)\in \left[\frac{\pi\ga}{2(\ga+1)},\frac{\pi}{2}\right),
\end{cases}
$$
for any $\eps\in(0,\ga\pi/2)$. So, $\Re(e^{i\varphi_\eps}w^{\ga+1})=|w|^{\ga+1}\cos((\ga+1)|\arg(w)|-\ga\pi/2+\eps)$, and making $\eps$ tend to 0 we obtain that
\begin{equation}\label{equaEstim.h.NonCentralSector}
  |h(w)|\le  \frac{2CA^p p! M_p}{|w|^{p(\ga+1)}|w|^{\ga+1}\cos((\ga+1)|\arg(w)|-\ga\pi/2)},\quad p\in\N_0.
\end{equation}
Now, observe that in this case
$$
0<\frac{\pi}{2}-|\arg(w)|\le (\ga+1)(\frac{\pi}{2}-|\arg(w)|)\le \frac{\pi}{2},
$$
and so
\begin{align*}
|w|\cos\left((\ga+1)|\arg(w)|-\frac{\ga\pi}{2}\right)&=|w|\sin\left((\ga+1)\left(\frac{\pi}{2}-|\arg(w)|\right)\right)\\
&\ge |w|\sin\left(\frac{\pi}{2}-|\arg(w)|\right)=|w|\cos(\arg(w))=\Re(w)>1.
\end{align*}
Since we also have $|w|^{\ga}>1$, from~\eqref{equaEstim.h.NonCentralSector} we obtain the same estimates~\eqref{equaEstim.h.CentralSector} given in the first case.
\end{itemize}
Since $h$ is not identically 0, by Theorem~\ref{theo.Mandelbrojt} we deduce that the series $\sum_{p=0}^\infty ((p+1)m_p)^{-1/(\ga+1)}$ converges, and Theorem~\ref{theoSalinas} implies that $\widetilde{\mathcal{B}}:\mathcal{A}_{\M}(S_{\gamma})\longrightarrow \C[[z]] _\M$ is not injective.
\end{proof1}

\begin{rema}
As an easy consequence we have that if  $\o(\M)<\infty$, then $$S_{\M}\subseteq  \widetilde{S}^u_{\M}\subseteq\widetilde{S}_{\M}\subseteq(0,\o(\M)].$$
\end{rema}

\section{Surjectivity intervals for the Borel map}\label{sectSurjectivity}

In the study of the surjectivity intervals a new index for the sequence $\M$, introduced in this regard by V. Thilliez~\cite[Section\ 1.3]{thilliez}, will play a central role.

\begin{defi}\label{defiindegrowM}
Let $\bM=(M_{p})_{p\in\N_{0}}$ be a strongly regular sequence and $\ga>0$. We say $\bM$ satisfies property $\left(P_{\ga}\right)$  if there exist a sequence of real numbers $m'=(m'_{p})_{p\in\N_0}$ and a constant $a\ge1$ such that: (i) $a^{-1}m_{p}\le m'_{p}\le am_{p}$, $p\in\N$, and (ii) $\left((p+1)^{-\ga}m'_{p}\right)_{p\in\N_0}$ is increasing.

The index $\ga(\bM)$ is then defined as

$$\ga(\bM):=\sup\{\ga\in\R:(P_{\ga})\hbox{ is fulfilled}\}\in(0,\infty).$$
\end{defi}

This definition makes sense for (lc) sequences, and in this case $\ga(\M)\in[0,\infty]$.
Indeed, this index may be equivalently expressed by different conditions:
\begin{enumerate}[(i)]
\item A sequence $(c_p)_{p\in\N_0}$ is \emph{almost increasing} if there exists $a>0$ such that for every $p\in\N_0$ we have that $c_p\leq a c_q $ for every $ q\geq p$.
It was proved in~\cite{JimenezSanzSRSPO} (for strongly regular sequences, but the argument works in general) that for any weight sequence $\M$ one has
$$\ga(\bM)=\sup\{\ga>0:(m_{p}/(p+1)^\ga)_{p\in\N_0}\hbox{ is almost increasing} \}.
$$
\item For any $\b>0$ we say that $\m$ satisfies $(\ga_{\b})$ if there exists $A>0$ such that
\begin{equation*}
(\ga_{\b})\qquad \sum^\infty_{\ell=p} \frac{1}{(m_\ell)^{1/\b}}\leq \frac{A (p+1) }{(m_p)^{1/\b}},  \qquad p\in\N_0.
\end{equation*}
Using this condition, which was introduced for $\beta=1$ by H. Komatsu~\cite{komatsu} (and named $(\ga_{1})$ after H.-J. Petzsche~\cite{Pet}), and generalized for $\b\in\N$ by J. Schmets and M. Valdivia~\cite{SchmetsValdivia}, we can obtain (see~\cite{JimenezSanzSchindlIndex,PhDJimenez}) an alternative expression of the index:
$$ \ga(\M)=\sup\{\b>0; \,\, \m \,\, \text{satisfies} \,\, (\ga_{\b})  \}.$$
\end{enumerate}

In~\cite[Ch.~2]{PhDJimenez} and \cite[Sect.~3]{JimenezSanzSchindlIndex}, the connections between the indices $\ga(\M)$ and $\o(\M)$, the growth properties usually imposed on weight sequences, and the theory of O-regular variation, have been thoroughly studied. In particular, 
whenever $\widehat{\M}=(p!M_p)_{p\in\N_0}$ is (lc) and $\b>0$ we have that
\begin{enumerate}[(i)]
 \item $\ga(\M)>0$ if and only if $\M$ is (snq) (this fact is deduced from the works of K. N.~Bari and S. B.~Ste\v{c}kin~\cite{BariSteckin} and S.~Tikhonov~\cite[Lemma\ 4.5]{Tikhonov}).
 \item $\ga(\widehat{\M})>1$ if and only if $\widehat{\m}$ satisfies $(\ga_1)$.
 \item $\ga(\widehat{\M})>\b$ if and only if $\widehat{\m}$ satisfies $(\ga_\b)$ (this is a consequence of (ii)).
\end{enumerate}

A straightforward verification shows that for every $s>0$ one has
$$
\ga((p!^sM_p)_{p\in\N_0})=\ga(\M)+s,\qquad \ga((M_p^s)_{p\in\N_0})=s\ga(\M),
$$
\begin{equation}\label{equaPropertiesIndexOmega}
\omega((p!^sM_p)_{p\in\N_0})=\omega(\M)+s,\qquad \omega((M_p^s)_{p\in\N_0})=s\omega(\M).
\end{equation}

Next we compare the two indices introduced so far.
\begin{prop}\label{propComparisonIndices}
For any weight sequence $\M$ we always have $\ga(\M)\leq\o(\M)$.
\end{prop}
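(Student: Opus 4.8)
The plan is to compare the two indices through their characterizations as suprema of exponents for which certain series-summation conditions hold. Recall from the excerpt that
$$
\o(\M)=\sup\{\mu>0:\ \texttextstyle\sum_{p\ge 0}(m_p)^{-1/\mu}<\infty\},
$$
while by the condition $(\ga_\b)$ we have
$$
\ga(\M)=\sup\{\b>0:\ \m \text{ satisfies } (\ga_\b)\},
$$
where $(\ga_\b)$ asks for a constant $A>0$ with $\sum_{\ell\ge p}(m_\ell)^{-1/\b}\le A(p+1)(m_p)^{-1/\b}$ for all $p$. The key observation is that if $\m$ satisfies $(\ga_\b)$ for some $\b>0$, then taking $p=0$ in the inequality immediately gives $\sum_{\ell\ge 0}(m_\ell)^{-1/\b}\le A(m_0)^{-1/\b}<\infty$, so the series $\sum_p (m_p)^{-1/\b}$ converges; hence $\b\le\o(\M)$ by the above characterization of $\o(\M)$.

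From this, taking the supremum over all $\b$ for which $(\ga_\b)$ holds yields $\ga(\M)\le\o(\M)$, which is exactly the claim. The argument is uniform in the sense that it works whether these suprema are finite or infinite; in particular if $\ga(\M)=\infty$ then $(\ga_\b)$ holds for arbitrarily large $\b$, forcing $\o(\M)=\infty$ as well. I would also note that the characterization $\ga(\M)=\sup\{\b>0:\ \m \text{ satisfies } (\ga_\b)\}$ was quoted in the excerpt only via references; if one wants a fully self-contained argument one can instead use the almost-increasing characterization $\ga(\M)=\sup\{\ga>0:(m_p/(p+1)^\ga)_{p}\text{ is almost increasing}\}$, from which a Cauchy-condensation / dyadic-blocking estimate shows that almost-increasingness of $m_p/(p+1)^{\ga}$ for some $\ga>0$ implies convergence of $\sum_p (m_p)^{-1/\mu}$ for every $\mu<\ga$, again giving $\ga\le\o(\M)$.

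The main (minor) obstacle is bookkeeping about endpoints and the degenerate cases: one must be slightly careful that the supremum defining $\ga(\M)$ is over $\b$ such that $(\ga_\b)$ holds, and one needs $(\ga_\b)$ for a given $\b$ to entail $\b\le\o(\M)$ rather than merely $\b<\o(\M)$ — which the $p=0$ trick delivers cleanly since it produces actual convergence at the exponent $1/\b$. There is no real analytic difficulty here; the proposition is essentially a formal consequence of the series-convergence reformulations of both indices, and the proof should be only a few lines.
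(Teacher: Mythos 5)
Your argument is correct, and it reaches the conclusion by a route that differs in its ingredients from the paper's, even though both are short formal consequences of stated characterizations. The paper's proof uses the almost-increasing characterization $\ga(\M)=\sup\{\ga>0:(m_p/(p+1)^\ga)_p \text{ almost increasing}\}$: taking $p=0$ in the almost-increasing inequality gives $m_p\ge (m_0/a)(p+1)^\ga$, hence $\log(m_p)\ge\ga\log(p+1)+\log(m_0/a)$, and then $\o(\M)=\liminf_p\log(m_p)/\log(p)\ge\ga$ directly from the definition of $\o(\M)$. You instead use the $(\ga_\b)$ characterization of $\ga(\M)$ together with the exponent-of-convergence characterization $\o(\M)=\sup\{\mu>0:\sum_p(m_p)^{-1/\mu}<\infty\}$, extracting convergence of the series from the $p=0$ instance of $(\ga_\b)$; this is clean and the endpoint bookkeeping you worry about is handled correctly (membership of $\b$ in the defining set gives $\b\le\o(\M)$, and the empty-set case gives $\ga(\M)=0$ trivially). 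Two small remarks. First, the $(\ga_\b)$ characterization is cited in the paper to a reference listed as ``in preparation,'' whereas the almost-increasing characterization is attributed to a published argument, so the paper's choice of starting point is marginally more self-contained; your fallback via almost-increasingness closes that gap. Second, in that fallback no Cauchy condensation or dyadic blocking is needed: the $p=0$ instance of almost-increasingness already gives the pointwise bound $m_q\ge(m_0/a)(q+1)^\ga$, and comparison with $\sum_q(q+1)^{-\ga/\mu}$ (for $\mu<\ga$), or the paper's direct logarithmic estimate, finishes immediately.
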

\begin{proof1}
The statement is trivial if $\ga(\M)=0$. Otherwise, it suffices to prove that whenever $\ga>0$ is such that $(m_{p}/(p+1)^\ga)_{p\in\N_0}$ is almost increasing, one has $\ga\le\o(\M)$. By definition, there exists $a>0$ such that for every $p\in\N_0$ one has $m_0\le am_p/(p+1)^\ga$, and so
$\log(m_p)\ge \ga\log(p+1)+\log(m_0/a)$. From here and by the definition of $\o(\M)$ the conclusion easily follows.
\end{proof1}

\subsection{Weight sequences}\label{subsectSurjGeneralWeightSequences}

Our first result is based on a theorem by H.-J. Petzsche in the ultradifferentiable setting and we need to consider the following space.

\begin{defi}
We say that $f\in\mathcal{E}_{\M}([-1,1])$ if $f\in\mathcal{C}^{\infty}([-1,1])$ and there exists a constant $A>0$ for which
$$
\sup_{p\in\N_0,\, x\in[-1,1]}\frac{|f^{(p)}(x)|}{A^pp!M_p}<\infty.
$$
\end{defi}

Correspondingly, we consider the Borel map $\mathcal{B}:\mathcal{E}_{\M}([-1,1])\longrightarrow \C[[z]]_{\M}$ sending $f$ into the formal power series $\sum^\infty_{p=0} (f^{(p)}(0)/p!) z^p$ (we warn the reader our notations differ from those in~\cite{Pet}).

All over the paper~\cite{Pet}, H.-J. Petszche assumes that $\widehat{\M}$ is a weight sequence and that $\M$ satisfies (nq). However, condition (nq) can be suppressed in the statement of the following theorem, since, if $\widehat{\m}=((p+1)m_p)_{p\in\N_0}$ satisfies ($\ga_1$) then $\M$ satisfies (snq) and, consequently, (nq), and there is only one direction that needs to be checked. This can be done by carefully inspecting his proof.

\begin{theo}[\cite{Pet}, Thm.\ 3.5]\label{th.Petszche}
Let $\M$ be a sequence such that $\widehat{\M}$ is weight sequence. Then, the Borel map $\mathcal{B}:\mathcal{E}_{\M}([-1,1])\longrightarrow \C[[z]]_{\M}$  is surjective if and only if $\widehat{\m}$ satisfies ($\ga_1$).
\end{theo}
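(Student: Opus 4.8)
The plan is to deduce the statement from Petzsche's original result \cite[Thm.~3.5]{Pet}, paying attention to the single place where his blanket hypothesis ``$\M$ satisfies (nq)'' is really used. Begin with a purely arithmetic remark: if $\widehat{\m}=(\widehat{m}_p)_{p\in\N_0}$ denotes the sequence of quotients of $\widehat{\M}=(p!M_p)_{p\in\N_0}$, then $\widehat{m}_p=(p+1)m_p$, so $1/\widehat{m}_\ell=M_\ell/((\ell+1)M_{\ell+1})$, and condition $(\ga_1)$ for $\widehat{\m}$,
$$\sum_{\ell\ge p}\frac{1}{\widehat{m}_\ell}\le\frac{A(p+1)}{\widehat{m}_p}=\frac{A}{m_p}\qquad(p\in\N_0),$$
is word for word the condition (snq) for $\M$. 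In particular $(\ga_1)$ for $\widehat{\m}$ entails (snq), and hence (nq), for $\M$. This settles the sufficiency at once: if $\widehat{\m}$ satisfies $(\ga_1)$ then $\M$ satisfies (nq), so, together with the standing assumption that $\widehat{\M}$ is a weight sequence, all the hypotheses of \cite[Thm.~3.5]{Pet} hold and that theorem gives the surjectivity of $\mathcal{B}$.

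For the necessity one may no longer assume (nq) from the outset, and here the plan is to revisit Petzsche's proof of the implication ``$\mathcal{B}$ surjective $\Rightarrow$ $\widehat{\m}$ satisfies $(\ga_1)$'' and to check that it never appeals to (nq). That argument proceeds as follows. Both $\mathcal{E}_{\M}([-1,1])$ and $\C[[z]]_{\M}$ are countable inductive limits of Banach spaces, and $\mathcal{B}$ is continuous and linear; by the standard open-mapping arguments available for such limits, surjectivity is effective, i.e.\ there are constants so that every series in a fixed Banach step $\C[[z]]_{\M,A}$ has a preimage staying in a fixed bounded subset of $\mathcal{E}_{\M}([-1,1])$. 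One then evaluates $\mathcal{B}$ on an explicit family of finitely supported power series adapted to the sequence $((p+1)m_p)_{p\in\N_0}$, bounds the derivatives at the origin of the corresponding preimages by means of Taylor's formula and the uniform estimate on that bounded set, and, after optimizing the length of the support, the resulting family of inequalities collapses precisely to the tail bound $(\ga_1)$ for $\widehat{\m}$. Non-quasianalyticity of $\M$ plays no part in this chain: in \cite{Pet} condition (nq) enters only in the converse implication, where a genuine function with a prescribed Borel jet at $0$ must be constructed inside the class, for which non-quasianalyticity of the class is essential.

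Consequently the only point that actually requires work is this last verification, namely a careful line-by-line reading of the necessity half of the proof of \cite[Thm.~3.5]{Pet} to confirm that the hypothesis (nq) is used nowhere there; everything else reduces to the elementary equivalence ``$(\ga_1)$ for $\widehat{\m}$ $\Leftrightarrow$ (snq) for $\M$'' together with a direct citation of Petzsche's theorem in the form he proved it.
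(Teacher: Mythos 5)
Your proposal is correct and follows essentially the same route as the paper: the sufficiency direction is reduced to Petzsche's original theorem via the observation that $(\ga_1)$ for $\widehat{\m}$ is literally (snq) for $\M$ and hence implies (nq), while the necessity direction is handled by inspecting Petzsche's proof of that implication and confirming that (nq) is never invoked there. The paper gives exactly this argument in the remark preceding the theorem, so there is nothing to add.
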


We are ready to give the first connection between the growth index $\ga(\M)$ with the surjectivity intervals which holds for arbitrary weight sequences.

\begin{lemma}\label{lemmaSurjectivityImpliessnq}
Let $\M$ be a weight sequence. If $\widetilde{S}_{\M}\neq\emptyset$, then $\M$ has (snq) or, equivalently, $\ga(\M)>0$.
\end{lemma}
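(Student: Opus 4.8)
The statement asserts that if $\widetilde{S}_\M\neq\emptyset$, i.e. the Borel map $\widetilde{\mathcal{B}}:\widetilde{\mathcal{A}}_\M(S_\ga)\to\C[[z]]_\M$ is surjective for \emph{some} $\ga>0$, then $\M$ satisfies (snq). Since the equivalence of (snq) with $\ga(\M)>0$ for a weight sequence has already been recorded in the excerpt (it follows from the characterization via $(\ga_\beta)$ together with the cited results of Bari--Ste\v{c}kin and Tikhonov; note that for a weight sequence $\widehat{\M}=(p!M_p)_{p\in\N_0}$ is automatically (lc)), the real content is to derive (snq), or some equivalent reformulation, from surjectivity in the ultraholomorphic class. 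The natural bridge is Petzsche's Theorem~\ref{th.Petszche}: if I can show that surjectivity of $\widetilde{\mathcal{B}}$ in $\widetilde{\mathcal{A}}_\M(S_\ga)$ forces surjectivity of the ultradifferentiable Borel map $\mathcal{B}:\mathcal{E}_\M([-1,1])\to\C[[z]]_\M$, then Petzsche gives that $\widehat{\m}=((p+1)m_p)_{p\in\N_0}$ satisfies $(\ga_1)$, hence $\M$ satisfies (snq) as noted in the remark preceding that theorem.

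\textbf{Transferring surjectivity to the real line.} The key step is therefore: given $\widehat{f}=\sum a_p z^p\in\C[[z]]_\M$, produce a function in $\mathcal{E}_\M([-1,1])$ whose Taylor coefficients at $0$ are the $a_p/p!$. Starting from surjectivity of $\widetilde{\mathcal{B}}$, pick $F\in\widetilde{\mathcal{A}}_\M(S_\ga)$ with $F\sim_\M\widehat f$. By Proposition~\ref{propcotaderidesaasin}.(ii), on any proper bounded subsector $T\ll S_\ga$ we have $F|_T\in\mathcal{A}_{\M,A_T}(T)$, and the coefficients of $\widehat f$ are recovered as the limits of $F^{(p)}(z)/p!$ as $z\to0$ in $T$. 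Choosing $T$ to contain a segment of the positive real axis, say $(0,\delta)$, the restriction $g(x):=F(x)$ for $x\in(0,\delta)$ is $\mathcal{C}^\infty$ up to $0$ with $g^{(p)}(0)=p!\,a_p$ and satisfies uniform bounds $|g^{(p)}(x)|\le C A^p p! M_p$ on $[0,\delta]$. After rescaling $x\mapsto \delta x$ (which only changes the type $A$ by a constant factor, using that $\M$ is (lc) so $M_p$ is submultiplicative-ish under such rescalings — more simply, $|g^{(p)}(\delta x)|\delta^p\le C(A\delta)^p p! M_p$), one obtains a function on $[0,1]$ in the corresponding class. To land in $\mathcal{E}_\M([-1,1])$ (a two-sided interval), extend $g$ from $[0,1]$ to $[-1,1]$ using a Whitney-type / Borel extension: there is a standard linear continuous extension operator from $\mathcal{C}^\infty$ functions on $[0,1]$ (with control of derivatives at $0$ and on the interval) to $\mathcal{C}^\infty$ on $[-1,1]$ preserving the class $\mathcal{E}_\M$ — for instance Seeley's or Mityagin's extension, or simply multiply by a suitable $\mathcal{E}_\M$ cutoff and extend by a Borel-type series on $[-1,0]$ matching the derivatives $g^{(p)}(0)$, which is possible precisely because $\widehat f\in\C[[z]]_\M$ and one may reuse a one-sided version of the construction. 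The upshot: the Borel image of $\mathcal{E}_\M([-1,1])$ contains $\widehat f$, and since $\widehat f\in\C[[z]]_\M$ was arbitrary, $\mathcal{B}$ is surjective.

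\textbf{Conclusion.} Applying Theorem~\ref{th.Petszche} yields that $\widehat{\m}$ satisfies $(\ga_1)$; by the remark just before that theorem (or equivalently by item (ii) of the list characterizing $\ga$ via $(\ga_\beta)$, giving $\ga(\widehat\M)>1$, i.e. $\ga(\M)>0$ by the shift formula $\ga(\widehat\M)=\ga(\M)+1$), $\M$ satisfies (snq). This is exactly the claim.

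\textbf{Main obstacle.} The delicate point is the last part of the transfer step: going from a one-sided $\mathcal{C}^\infty$ germ at $0$ (coming from $F$ restricted to a ray inside the sector) to a genuine element of $\mathcal{E}_\M([-1,1])$ on a full two-sided interval, while keeping the Denjoy--Carleman-type estimates with a single constant $A$. One must be careful that the extension across $0$ does not destroy the bounds; the cleanest route is probably to invoke an existing linear-continuous extension theorem for ultradifferentiable classes on one-sided intervals (the Roumieu case is standard), or alternatively to observe that Petzsche's characterization is insensitive to whether one works on $[-1,1]$ or $[0,1]$ and restate Theorem~\ref{th.Petszche} accordingly. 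Everything else — the rescaling, the identification of derivatives at $0$ via Proposition~\ref{propcotaderidesaasin}, and the final invocation of Petzsche — is routine.
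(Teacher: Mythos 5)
Your overall strategy --- transfer the surjectivity to the real-line Borel map $\mathcal{B}:\mathcal{E}_{\M}([-1,1])\to\C[[z]]_{\M}$ and then invoke Petzsche's Theorem~\ref{th.Petszche} --- is exactly the paper's, and the one-sided part of your construction (restricting $F$ to a ray, using Proposition~\ref{propcotaderidesaasin}.(ii) to obtain the bounds and the derivatives at $0$, rescaling to $[0,1]$) is correct. The gap is precisely the point you flag as the ``main obstacle'': passing from the one-sided germ on $[0,1]$ to a genuine element of $\mathcal{E}_{\M}([-1,1])$. None of the fixes you suggest is available at this level of generality. Extending by ``a Borel-type series on $[-1,0]$ matching the derivatives $g^{(p)}(0)$'' is circular: producing a function on $[-1,0]$ in the class with prescribed derivatives $p!\,a_p$ at $0$ is exactly the surjectivity of the (one-sided) ultradifferentiable Borel map, which by Petzsche-type results is equivalent to the condition (snq) you are trying to prove. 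Seeley/Mityagin-type extension operators, and Whitney-type extensions \`a la Chaumat--Chollet, preserve Roumieu classes only under additional hypotheses on $\M$ (at the very least one needs a nontrivial cutoff in the class, hence (nq), and control of the dilation factors, which usually uses (dc) or (mg)); here $\M$ is only assumed to be a weight sequence. Finally, the assertion that Theorem~\ref{th.Petszche} ``is insensitive to whether one works on $[-1,1]$ or $[0,1]$'' is plausible but unproved, and it is not what is cited.

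The paper closes this gap with a simple observation you did not use: surjectivity of $\widetilde{\mathcal{B}}$ is rotation-invariant, so $\widetilde{\mathcal{B}}:\widetilde{\mathcal{A}}_{\M}(S(\pi,\ga))\to\C[[z]]_{\M}$ is also surjective. One therefore picks $f_1$ on $S_\ga$ and $f_2$ on $S(\pi,\ga)$, both asymptotic to the same $\widehat{f}$, and glues: $h=f_1$ on $(0,1]$, $h=f_2$ on $[-1,0)$, $h(0)=a_0$. A recursive Mean Value Theorem argument together with a double application of Proposition~\ref{propcotaderidesaasin}.(ii), one on each side, shows $h\in\mathcal{C}^\infty([-1,1])$ with $h^{(p)}(0)=p!\,a_p$ and the required uniform estimates, so $h\in\mathcal{E}_{\M}([-1,1])$ and no extension theorem is needed. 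Replacing your extension step by this rotation-and-gluing argument makes the proof complete; the rest (Petzsche, and the identification of $(\ga_1)$ for $\widehat{\m}$ with (snq) for $\M$ and with $\ga(\M)>0$) is as you wrote it.
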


\begin{proof}
Let $\widehat{f}=\sum_{p=0}^\infty a_pz^p\in\C[[z]]_{\M}$.
Since there exists $\ga>0$ such that $\widetilde{\mathcal{B}}:\widetilde{\mathcal{A}}_{\M}(S_\ga)\longrightarrow \C[[z]]_{\M}$ is surjective, we may take a function $f_1\in\widetilde{\mathcal{A}}_{\M}(S_\ga)$ such that $\widetilde{\mathcal{B}}(f_1)=\widehat{f}$. A suitable rotation shows that also $\widetilde{\mathcal{B}}:\widetilde{\mathcal{A}}_{\M}(S(\pi,\ga))\longrightarrow \C[[z]]_{\M}$ is  surjective and so there exists a function $f_2\in\widetilde{\mathcal{A}}_{\M}(S(\pi,\ga))$ such that $\widetilde{\mathcal{B}}(f_2)=\widehat{f}$. It is plain to check (by a recursive application of the Mean Value Theorem) that the function
$$
h(x)=f_1(x),\ \ x\in(0,1];\quad h(x)=f_2(x),\ \ x\in[-1,0);\quad
h(0)=a_0,
$$
belongs to $\mathcal{C}^{\infty}([-1,1])$ and $h^{(p)}(0)=p!a_p$ for every $p\in\N$ (see Proposition~\ref{propcotaderidesaasin}). Moreover, considering suitable subsectors of $S_\ga$ (respectively, $S(\pi,\ga)$) containing $(0,1]$ (resp., $[-1,0)$), and again by a double application of Proposition~\ref{propcotaderidesaasin}.(ii), one obtains a constant $A>0$ such that
$$
\sup_{p\in\N_0,\, x\in[-1,1]}\frac{|h^{(p)}(x)|}{A^pp!M_p}<\infty.
$$
Hence, we deduce that the Borel map $\mathcal{B}:\mathcal{E}_{\M}([-1,1]) \longrightarrow \C[[z]]_{\M}$ is also surjective. Since $\M$ is a weight sequence, $\widehat{\M}$ also is, so  by Theorem~\ref{th.Petszche} this surjectivity amounts to the fact that the sequence of quotients of $\widehat{\M}=(p!M_p)_{p\in\N_0}$, namely $\widehat{\m}$, satisfies the condition $(\ga_1)$, which is precisely condition (snq) for $\M$.
\end{proof}

No other result concerning the surjectivity of the Borel map is present in the literature without adding some additional condition on the weight sequence $\M$ in this ultraholomorphic setting.

Our next results, Theorem~\ref{th.SurjectivityNonUniformAsymp} and Theorem~\ref{th.SurjectivityUniformAsymp.plus.dc}, are inspired by statements of J. Schmets and M. Valdivia~\cite[Section~4]{SchmetsValdivia} in the Beurling case.  Although we do not treat this case here, some of their proofs can be adapted to, or suitably modified for, our Roumieu-like spaces.\par

While the aforementioned authors impose condition (dc) on the sequence $\M$, i.e., there exists $A>0$ such that $M_{p+1}\leq A^p M_p$ for every $p\in\N_0$, we will show that, in some cases, one can obtain some information without it.\par

In the course of our arguments we will need to introduce suitable ultradifferentiable classes (the notations again differ from those in~\cite{SchmetsValdivia}):\par

For a natural number $r\in\N$ and a sequence $\M$, we consider the space $\mathcal{N}_{r,\M}([0,\infty))$ of functions $f\in\mathcal{C}^{\infty}([0,\infty))$ such that
\begin{enumerate}[(a)]
 \item $f^{(pr+j)}(0)=0$ for every $p\in\N_0$ and $j\in\{1,\dots,r-1\}$ (this condition is empty when $r=1$),
 \item there exists a constant $A>0$ for which
$$
\sup_{p\in\N_0,\, x\in[0,\infty)}\frac{|f^{(pr)}(x)|}{A^pp!M_p}<\infty.
$$
\end{enumerate}

The subspace of $\mathcal{N}_{r,\M}([0,\infty))$ consisting of those functions with support contained in $[0,1]$ will be denoted by $\mathcal{L}_{r,\M}([0,\infty))$.

Similarly, we introduce the space $\mathcal{E}_{r,\M}([0,1])$ of functions $f\in\mathcal{C}^{\infty}([0,1])$ such that
\begin{enumerate}[(a)]
 \item $f^{(pr+j)}(0)=0$ for every $p\in\N_0$ and $j\in\{1,\dots,r-1\}$ (this condition is empty when $r=1$),
 \item there exists a constant $A>0$ for which
$$
\sup_{p\in\N_0,\, x\in[0,1]}\frac{|f^{(pr)}(x)|}{A^pp!M_p}<\infty.
$$
 \end{enumerate}

Note that these spaces  coincide with the classical ones for $r=1$.
In this context, it is natural to consider the next auxiliary sequence.

\begin{defi}
Given a sequence $\M$ and $r\in\N$, its \textit{$r-$interpolating sequence} $\mathbb{P}_{r,\M}=\mathbb{P}=(P_n)_{n\in\N_0}$ is defined by
$$
P_{kr+j}=\left(M_k^{r-j} M_{k+1}^{j}\right)^{1/r}, \quad k\in\N_0,\ j\in\{0,\dots,r\}.
$$
Note that with $j=r$ for $k$ and $j=0$ for $k+1$  we obtain the same value.
As it was pointed out in~\cite{SchmetsValdivia}, a simple computation leads to
\begin{enumerate}[(i)]
\item$\mathbb{P}_{1,\M}=\M$,
 \item $P_{kr}=M_k$ for every $k\in\N_0$,
 \item $p_{kr+j}=(m_k)^{1/r}$ for all $k\in\N_0$ and $j\in\{0,\dots,r-1\}$,
 \item If $\M$ is a weight sequence, then $\mathbb{P}$ also is.
\end{enumerate}
\end{defi}

We also deduce the following relation for their injectivity indices.

 \begin{lemma}\label{lemma.P.interpolating.omega}
Let $\M$ be a sequence and $r\in\N$. Then
$$\o(\M) = r\o(\mathbb{P}).$$
\end{lemma}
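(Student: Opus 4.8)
The plan is to compute $\o(\mathbb{P})$ directly from its defining formula as an exponent of convergence, using the characterization $\o(\M)=\sup\{\mu>0:\sum_{p}(m_p)^{-1/\mu}<\infty\}$ from~\eqref{equaOmegaMExpoConver}, together with the explicit description of the quotients $p_n$ of $\mathbb{P}$. First I would recall from item (iii) of the definition of the $r$-interpolating sequence that $p_{kr+j}=(m_k)^{1/r}$ for all $k\in\N_0$ and $j\in\{0,\dots,r-1\}$; in particular every value $(m_k)^{1/r}$ is repeated exactly $r$ times among the quotients of $\mathbb{P}$. Hence, for any $\nu>0$,
$$
\sum_{n=0}^\infty \frac{1}{(p_n)^{1/\nu}}=\sum_{k=0}^\infty \frac{r}{\left((m_k)^{1/r}\right)^{1/\nu}}=r\sum_{k=0}^\infty\frac{1}{(m_k)^{1/(r\nu)}}.
$$
The left-hand series converges if and only if $\sum_k (m_k)^{-1/(r\nu)}$ converges.

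Next I would translate this equivalence into the indices. By~\eqref{equaOmegaMExpoConver}, $\o(\mathbb{P})=\sup\{\nu>0:\sum_n (p_n)^{-1/\nu}<\infty\}$ and $\o(\M)=\sup\{\mu>0:\sum_k (m_k)^{-1/\mu}<\infty\}$. The computation above shows that $\sum_n (p_n)^{-1/\nu}<\infty$ exactly when $\sum_k (m_k)^{-1/(r\nu)}<\infty$, i.e. exactly when $r\nu$ belongs to the set defining $\o(\M)$ (one should note this set is an interval of the form $(0,\o(\M))$ or $(0,\o(\M)]$, and more precisely that for $\mu<\o(\M)$ convergence holds and for $\mu>\o(\M)$ it fails, so the supremum is unaffected by the behaviour at the endpoint). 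Therefore the supremum of such $\nu$ equals $\tfrac1r$ times the supremum of such $\mu$, which gives $\o(\mathbb{P})=\tfrac1r\o(\M)$, i.e.\ $\o(\M)=r\o(\mathbb{P})$. The boundary cases $\o(\M)=0$ and $\o(\M)=\infty$ are handled by the same argument (in the first the defining set of $\o(\mathbb{P})$ is empty, in the second both are $(0,\infty)$).

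Alternatively, and perhaps more cleanly, I would use the $\liminf$ formula $\o(\M)=\liminf_{p\to\infty}\log(m_p)/\log(p)$ directly. Writing $n=kr+j$ with $0\le j<r$, one has $\log(p_n)=\tfrac1r\log(m_k)$ and $\log(n)=\log(kr+j)=\log k+O(1)$ as $k\to\infty$. Thus
$$
\frac{\log(p_n)}{\log(n)}=\frac{\tfrac1r\log(m_k)}{\log k+O(1)}\sim \frac1r\cdot\frac{\log(m_k)}{\log(k)}
$$
as $n\to\infty$ (equivalently $k\to\infty$), and since every value of $k$ is attained, taking $\liminf$ over $n$ gives $\o(\mathbb{P})=\tfrac1r\liminf_k \log(m_k)/\log(k)=\tfrac1r\o(\M)$. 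I expect no serious obstacle here; the only point requiring a little care is the bookkeeping of the repetition of indices (each $m_k$ contributes $r$ quotients of $\mathbb{P}$) and the routine verification that shifting the argument of the logarithm from $k$ to $kr+j$ does not affect the $\liminf$. I would present the $\liminf$ computation as the main argument, as it is shortest and avoids any discussion of endpoint behaviour of the exponent-of-convergence intervals.
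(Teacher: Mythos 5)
Your preferred argument (the $\liminf$ computation, writing $n=kr+j$, using $p_{kr+j}=(m_k)^{1/r}$ and $\log(kr+j)/\log(k)\to 1$) is exactly the proof given in the paper, and it is correct; your care about the $r$-fold repetition of indices and the agreement of the $\liminf$ along each residue class is the only point that needs checking, and you handle it. The alternative route via the exponent-of-convergence characterization would additionally require $\bm$ to be nondecreasing and tend to infinity (as in the cited proposition), so you are right to relegate it and lead with the $\liminf$ version, which works for an arbitrary sequence as the lemma is stated.
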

\begin{proof}
Fix $j\in\{0,\dots,r-1\}$ , the lemma is deduced from the next calculation
\begin{align*}
 \o(\M)&=\liminf_{k\to\infty} \frac{\log m_k}{\log k}= r \liminf_{k\to\infty} \frac{\log(m_k)^{1/r}}{\log k}=
r \liminf_{k\to\infty} \frac{\log p_{kr+j}}{\log (kr+j)} \frac{\log (kr+j)}{\log(k)}\\
&= r \liminf_{k\to\infty} \frac{\log p_{kr+j}}{\log (kr+j)}.
\end{align*}
\end{proof}

The introduction of this $r-$interpolating sequence is motivated by the following estimates, independently obtained by A. Gorny and H. Cartan (see ~\cite[Sect.\ 6.4.IV]{Mandelbrojt}).

\begin{lemma}\label{lemma.Gorny.Cartan}
 If $f\in\mathcal{C}^r ([-1,1])$ for some $r\in\N$ and
 $$Q_0:=\sup_{x\in [-1,1]}|f(x)|, \quad\text{and} \quad Q_r:=\sup_{x\in[-1,1]}|f^{(r)}(x)|,$$
then
$$\sup_{x\in[-1,1]}|f^{(j)}(x)|\leq (8er/j)^j \max(Q_0^{1-j/r},Q_r^{j/r},(r/2)^j Q_0).$$
for every $j\in\{1,\dots,r-1\}$.
\end{lemma}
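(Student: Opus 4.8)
\emph{Plan.} The statement is the classical Gorny--Cartan inequality, and I would reconstruct Cartan's interpolation-theoretic proof. Fix $j\in\{1,\dots,r-1\}$ and let $y\in[-1,1]$ be a point at which $|f^{(j)}|$ attains its maximum, which we call $M_j$. Pick a scale parameter $\ell\in(0,2]$ (to be optimised at the end) and a closed subinterval $J\subseteq[-1,1]$ of length $\ell$ containing $y$, placed as symmetrically about $y$ as the boundary of $[-1,1]$ allows. Let $P$ be the polynomial of degree $\le r-1$ interpolating $f$ at the $r$ nodes $y_1,\dots,y_r$ of $J$ obtained by affinely rescaling the Chebyshev nodes of $[-1,1]$ to $J$. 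The whole argument rests on splitting
\[
f^{(j)}(y)=P^{(j)}(y)+(f-P)^{(j)}(y)
\]
and estimating the two summands by entirely different mechanisms.

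For the error term I would exploit that $f-P$ vanishes at the $r$ interpolation nodes, so by Rolle's theorem $(f-P)^{(j)}$ has at least $r-j$ zeros $z_1<\dots<z_{r-j}$ in $J$; since $\deg P\le r-1$ we have $(f-P)^{(r)}=f^{(r)}$, hence the Lagrange remainder formula applied to the $C^{r-j}$ function $(f-P)^{(j)}$ with these zeros gives
\[
\bigl|(f-P)^{(j)}(y)\bigr|\le \frac{Q_r}{(r-j)!}\prod_{i=1}^{r-j}|y-z_i|\le \frac{Q_r\,\ell^{\,r-j}}{(r-j)!}.
\]
For the polynomial term I would write $P^{(j)}(y)=\sum_{i=1}^{r}f(y_i)\,\ell_i^{(j)}(y)$ with $\ell_i$ the Lagrange cardinal functions of the rescaled Chebyshev nodes, and bound $|P^{(j)}(y)|\le Q_0\sum_i|\ell_i^{(j)}(y)|$; equivalently, pass through the scaled A.~A.~Markov / V.~A.~Markov inequality $\|P^{(j)}\|_{\infty,J}\le(2/\ell)^j\,T_{r-1}^{(j)}(1)\,\|P\|_{\infty,J}$ combined with $\|P\|_{\infty,J}\le Q_0+Q_r\ell^{r}/(2^{2r-1}r!)$ (the latter being the standard Chebyshev interpolation error bound). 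Either route produces an estimate of the shape $|P^{(j)}(y)|\le C_{r,j}\,\ell^{-j}Q_0+(\text{small})\cdot\ell^{\,r-j}Q_r$ with an explicit $C_{r,j}$.

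It then remains to choose $\ell$. If $(Q_0/Q_r)^{1/r}\le 2$, I would take $\ell=(Q_0/Q_r)^{1/r}$, which balances the two resulting terms and gives a bound of the form $(\mathrm{const})^j\,Q_0^{1-j/r}Q_r^{j/r}$; if $(Q_0/Q_r)^{1/r}>2$ (the nearly-polynomial regime, where $Q_r<2^{-r}Q_0$), I would take $\ell=2$, in which case the $Q_r$-term is dominated by the $Q_0$-term, leaving a bound of the form $(\mathrm{const})^j(r/2)^jQ_0$. Collecting the two regimes yields the stated inequality. \emph{The main obstacle is purely quantitative:} the conceptual steps are routine, but one must track all constants carefully — using Stirling-type bounds such as $(2j-1)!!\ge (2j/e)^j/(2j+1)$ and $(r-j)!\ge((r-j)/e)^{r-j}$, and crucially using that $f^{(j)}$ is evaluated at the single extremal point $y$ with nodes placed about it, rather than a crude global bound on $\|P^{(j)}\|_\infty$ — so that the accumulated constants collapse exactly to the factor $(8er/j)^j$ and to the displayed three-term maximum. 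For this delicate bookkeeping I would follow, and refer the reader to, Mandelbrojt~\cite[Sect.~6.4.IV]{Mandelbrojt}.
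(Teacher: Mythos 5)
The paper does not prove this lemma at all: it is quoted as a classical result of Gorny and Cartan and simply referred to Mandelbrojt~\cite[Sect.~6.4.IV]{Mandelbrojt}, which is exactly where your proposal also ends up sending the reader for the decisive quantitative step. Your sketch of the intervening argument (Chebyshev interpolation on a subinterval of optimised length $\ell$ about the extremal point, Rolle plus the Lagrange remainder for the error term, a scaled Markov inequality for the polynomial term, then balancing $\ell$ against $(Q_0/Q_r)^{1/r}$ with the cutoff at $\ell=2$) is the standard and correct route to the Gorny--Cartan estimate, so as a companion to the citation it is methodologically fine; but as a self-contained proof it is incomplete, since the collapse of the accumulated constants to precisely $(8er/j)^j$ is asserted rather than carried out, and that bookkeeping is the only nontrivial content once the strategy is fixed.

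One point worth flagging: your two regimes naturally produce a bound of the form $(\mathrm{const})^j\max\bigl(Q_0^{1-j/r}Q_r^{j/r},\,(r/2)^jQ_0\bigr)$, with the \emph{product} $Q_0^{1-j/r}Q_r^{j/r}$, which is homogeneous of degree one in $f$, whereas the lemma as printed lists $Q_0^{1-j/r}$ and $Q_r^{j/r}$ as separate entries of the maximum (which is not scale-invariant, and in the regime where both quantities exceed $1$ is strictly smaller than the product). These two statements do not imply one another in general, so you should either verify which form Mandelbrojt actually proves or note that your argument yields the product form; in the paper's application (bounding $\varphi h$ in the proof of Theorem~\ref{th.SurjectivityNonUniformAsymp}) the product form is what is needed, so this discrepancy is harmless, but it should not be glossed over with ``collecting the two regimes yields the stated inequality.''
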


We will employ the integral representation for the reciprocal Gamma function, usually referred
to as \textit{Hankel's formula} (see \cite[p. 228]{balserutx}):
\begin{equation*}
 \frac{1}{\Gamma (z)} =\frac{1}{2\pi i}\int_{\ga_\phi} w^{-z} e^{w} dw
\end{equation*}
for all $z\in \C$ where $\ga_{\phi}$ is a path consisting of a half-line in direction $-\phi\pi/2$ (for any $\phi\in(1,2)$) with end point $w_0$ on the ray $\arg(w)=-\phi\pi/2$ then the circular arc $|w|=|w_0|$ from $w_0$ to the point $w_1$ on the ray $\arg(w)=\phi\pi/2$ (traversed anticlockwise), and finally the half-line starting at $w_1$ in direction $\phi\pi/2$.
Now, for every $\b \in (1, 3/2)$ and any $t\in S_{(\b-1)/2}$, we define
$$\phi_{\b,t}:=\b+2\arg(t)/\pi \in ((\b+1)/2,(3\b-1)/2)\en (1,7/4).$$
Hence, the change of variables $u=t/w$ maps $\ga_{\phi_{\b,t}}$ into  $\delta_{\b}$ which is a path consisting of a segment from the origin to a point $u_0$ with $\arg(u_0)=\b\pi/2$, then the circular arc $|u|=|u_0|$ from $u_0$ to
the point $u_1$ on the ray $\arg(u)=-\b\pi/2$ (traversed clockwise), and
finally the segment from $u_1$ to the origin. Therefore, for every $z\in\C$ and all $t\in S_{(\b-1)/2}$ we have that
\begin{equation}\label{eq.Hankel.formula}
 \frac{t^{z-1}}{\Gamma (z)} = \frac{-1}{2\pi i}\int_{\delta_\b} u^{z-1} e^{t/u} \frac{du}{u}.
\end{equation}

Our first result is obtained as a consequence of the next proposition and the proof is inspired by Theorem 4.6 in~\cite{SchmetsValdivia}.

\begin{prop}[\cite{SchmetsValdivia}, Prop.\ 5.1]\label{prop.51.Lr.SchmetsValdivia}
Let $\M$ be a sequence such that $\widehat{\M}$ is a weight sequence and $r\in\N$. If
the restriction map
$$\mathcal{B}_{r}:\mathcal{L}_{r,\M}([0,\infty))\longrightarrow \C[[z]]_{\M}$$
 sending $f$ to the formal power series $\sum^\infty_{p=0} (f^{(pr)}(0)/p!) z^p$  is surjective, then $\widehat{\m}$ satisfies ($\ga_r$).
\end{prop}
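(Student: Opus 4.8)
The plan is to reduce the surjectivity of $\mathcal{B}_r$ on $\mathcal{L}_{r,\M}([0,\infty))$ to the surjectivity of the classical Borel map on an ultradifferentiable Denjoy--Carleman class, and then invoke Petzsche's Theorem~\ref{th.Petszche}. The natural candidate is the $r$-interpolating sequence $\mathbb{P}=\mathbb{P}_{r,\M}$: by its definition one has $P_{kr}=M_k$, so a function $g\in\mathcal{C}^\infty$ whose derivatives satisfy $|g^{(n)}(x)|\le CA^nn!P_n$ on some interval will, after reading off only the derivatives of order a multiple of $r$ at $0$, produce exactly a series in $\C[[z]]_\M$ (up to the $p!$ normalisations, which one has to track carefully since $\mathcal{L}_{r,\M}$ uses $p!M_p$ while $\mathbb{P}$-type estimates involve $(pr)!P_{pr}$; Stirling converts one into the other at the cost of absorbable geometric factors). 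So the first step is to observe that $\widehat{\M}$ being a weight sequence forces $\widehat{\mathbb{P}}=(n!P_n)_{n\in\N_0}$ to be a weight sequence as well (using item (iv) in the definition of the interpolating sequence together with the fact that multiplying by $n!$ preserves (lc) when the quotients already go to infinity).

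\smallskip

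The heart of the argument is the construction, from a surjective $\mathcal{B}_r$, of a surjective classical Borel map $\mathcal{B}:\mathcal{E}_{\widehat{\mathbb{P}}^{\,\flat}}([-1,1])\to\C[[z]]_{\widehat{\mathbb{P}}^{\,\flat}}$ for the appropriate sequence $\widehat{\mathbb{P}}^{\,\flat}$ with $P^{\flat}_n = P_n/$(whatever factor makes the normalisations line up) — equivalently, I want to show $\widehat{\mathbb{P}}$'s quotient sequence $\widehat{\mathbb{p}}=((n+1)p_n)_{n\in\N_0}$ satisfies $(\ga_1)$, and then translate this back to $\widehat{\m}$. Here I would use the explicit description $p_{kr+j}=(m_k)^{1/r}$ for $j\in\{0,\dots,r-1\}$: a short computation splitting the sum $\sum_{n\ge N}1/((n+1)p_n)$ into residue classes modulo $r$ shows that $\widehat{\mathbb{p}}$ satisfies $(\ga_1)$ if and only if $\sum_\ell 1/((\ell+1)m_\ell)^{1/r}$ obeys the corresponding tail bound $\le A(p+1)/((p+1)m_p)^{1/r}$ — and that is precisely condition $(\ga_r)$ for $\widehat{\m}$ as defined in item (ii) of the discussion following Definition~\ref{defiindegrowM} (recall $\ga(\widehat{\M})>\b\iff\widehat{\m}$ satisfies $(\ga_\b)$). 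Thus the chain is: $\mathcal{B}_r$ surjective $\Rightarrow$ classical Borel map surjective on the $\widehat{\mathbb{P}}$-class $\Rightarrow$ (Petzsche) $\widehat{\mathbb{p}}$ satisfies $(\ga_1)$ $\Rightarrow$ $\widehat{\m}$ satisfies $(\ga_r)$.

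\smallskip

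To produce the intermediate surjective Borel map, given $\widehat{g}=\sum b_n z^n\in\C[[z]]_{\widehat{\mathbb{P}}}$ I would build the "thinned" series $\widehat{f}=\sum_p a_p z^p$ with $a_p$ chosen so that $p!a_p$ matches $g^{(pr)}(0)$, check that $\widehat{f}\in\C[[z]]_\M$ (this is where $P_{kr}=M_k$ is used together with the $p!$-vs-$(pr)!$ bookkeeping), apply surjectivity of $\mathcal{B}_r$ to get $f\in\mathcal{L}_{r,\M}([0,\infty))$ with $\mathcal{B}_r(f)=\widehat{f}$, and then recover control on \emph{all} intermediate derivatives $f^{(pr+j)}$, $1\le j\le r-1$, via the Gorny--Cartan estimates of Lemma~\ref{lemma.Gorny.Cartan} applied on small subintervals — this is exactly what turns the gap-$r$ bounds on $f$ into genuine $\mathcal{C}^\infty$-ultradifferentiable bounds with the sequence $\widehat{\mathbb{P}}$, using the interpolation identity $P_{kr+j}=(M_k^{r-j}M_{k+1}^j)^{1/r}$ which is built precisely so that the geometric mean appearing in Gorny--Cartan's $\max(Q_0^{1-j/r},Q_r^{j/r},\dots)$ lands on $P_{kr+j}$. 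The resulting restriction of $f$ to $[-1,1]$ (or a rescaled copy of it, since $f$ lives on $[0,\infty)$ with support in $[0,1]$) lies in $\mathcal{E}_{\widehat{\mathbb{P}}}([-1,1])$-type class and hits $\widehat{g}$ under the ordinary Borel map.

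\smallskip

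The main obstacle I anticipate is the Gorny--Cartan step: one has to apply Lemma~\ref{lemma.Gorny.Cartan} not globally but on a scale-adapted family of subintervals (of length comparable to $m_k^{-1/r}$, the reciprocal of the relevant quotient) so that both $Q_0$ and $Q_r$ there are of the right size, and then reassemble the local bounds into a single estimate with a uniform constant $A$; keeping the constants geometric in $p$ (and independent of the choice of subinterval) is the delicate bookkeeping. A secondary technical point is handling the vanishing conditions $f^{(pr+j)}(0)=0$ and making sure the piecewise/support structure of elements of $\mathcal{L}_{r,\M}$ does not obstruct the identification with a standard Denjoy--Carleman class — but since $\mathcal{L}_{r,\M}$ functions are compactly supported in $[0,1]$ this is routine. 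Everything else (the residue-class splitting of the $(\ga_r)$ sum, the Stirling conversions, the verification that $\widehat{\mathbb{P}}$ is a weight sequence) is mechanical.
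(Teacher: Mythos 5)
A preliminary remark: the paper does not prove this proposition at all --- it is imported verbatim from Schmets--Valdivia (their Proposition 5.1) --- so there is no in-paper argument to compare yours with, and I judge your proposal on its own. Its general shape (interpolating sequence, Gorny--Cartan for the intermediate derivatives, Petzsche for the necessity of a $(\ga_1)$-type condition) is reasonable, but the pivotal bookkeeping step is wrong, and wrong in a way that changes the conclusion. You interpolate $\M$ and then multiply by factorials, i.e.\ you work with the quotient sequence $((n+1)p_n)_n$ where $p_{kr+j}=(m_k)^{1/r}$. Splitting $\sum_{n\ge N}1/((n+1)p_n)$ into residue classes mod $r$ uses $\sum_{j=0}^{r-1}1/(kr+j+1)\asymp 1/(k+1)$, so $(\ga_1)$ for that sequence reads $\sum_{k\ge K}(k+1)^{-1}(m_k)^{-1/r}\le A\,(m_K)^{-1/r}$, which is precisely (snq) for $(M_k^{1/r})_k$, i.e.\ $\ga(\M)>0$ --- the \emph{same} statement for every $r$. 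The target condition $(\ga_r)$ for $\widehat{\m}$ is $\sum_{k\ge K}((k+1)m_k)^{-1/r}\le A(K+1)\,((K+1)m_K)^{-1/r}$, equivalent to $\ga(\widehat{\M})>r$, i.e.\ $\ga(\M)>r-1$, which is strictly stronger for $r\ge2$: the factor $k+1$ must enter to the power $1/r$, not $1$. So your claimed ``if and only if'' is false for the sequence you chose, and the chain as written proves only that $\M$ has (snq). The object that makes the arithmetic work is the $r$-interpolating sequence of $\widehat{\M}$ itself, $\mathbb{P}_{r,\widehat{\M}}$, whose quotients on each block equal $(\widehat{m}_k)^{1/r}=((k+1)m_k)^{1/r}$; this is also the sequence whose geometric means match the derivative bounds $C A^p p!M_p=CA^p\widehat{M}_p$ actually carried by elements of $\mathcal{L}_{r,\M}$, so it is the one Gorny--Cartan naturally produces.

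Second, even with the correct sequence the reduction to Theorem~\ref{th.Petszche} does not close. Petzsche's theorem concerns surjectivity of the \emph{full} Borel map, which must realize arbitrary jet values at every order, whereas surjectivity of $\mathcal{B}_r$ on $\mathcal{L}_{r,\M}$ only produces functions whose derivatives of order $\not\equiv 0\ (\mathrm{mod}\ r)$ \emph{vanish} at the origin. Gorny--Cartan upgrades such an $f$ to membership in the interpolating ultradifferentiable class, but it cannot make the map hit a target series with nonzero entries in the intermediate residue classes. Filling those in (for instance by applying $\mathcal{B}_r$-surjectivity to shifted data and antidifferentiating $j$ times) meets the obstruction that the ratio of consecutive blocks of the interpolating sequence is $(\widehat{m}_p)^{j/r}$, which is not bounded by a geometric factor $A^p$ without a derivation-closedness hypothesis that the proposition does not assume. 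So either the necessity of $(\ga_r)$ must be proved directly --- e.g.\ by extracting quantitative surjectivity from a closed-graph argument and testing it against suitable jets supported on multiples of $r$, in the spirit of Petzsche's own necessity proof for $r=1$ --- or a genuinely new device is needed for the intermediate orders. As it stands, the proposal has a fatal normalisation error and an unresolved structural gap.
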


\begin{theo}\label{th.SurjectivityNonUniformAsymp}
Let $\M$ be a weight sequence. 
\begin{itemize}
\item[(i)] Let $\alpha>0$, $\alpha\notin\N$, be such that $\widetilde{\mathcal{B}}:\widetilde{\mathcal{A}}_{\M}(S_{\alpha})\to \C[[z]]_\M$ is surjective. Then, $\ga(\M)>\lfloor \alpha \rfloor $.
\item[(ii)] If we have that $\widetilde{S}_{\M}=(0,\infty)$, then $\ga(\M)=\infty$.
\end{itemize}
\end{theo}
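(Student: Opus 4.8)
The plan is to reduce the ultraholomorphic surjectivity statement to the ultradifferentiable one handled by Proposition~\ref{prop.51.Lr.SchmetsValdivia}, by the same ``gluing over rays'' device already used in the proof of Lemma~\ref{lemmaSurjectivityImpliessnq}, but now adapted so as to produce a function in one of the interpolating classes $\mathcal{L}_{r,\M}$ (or $\mathcal{N}_{r,\M}$). Concretely, for part (i) set $r:=\lfloor\alpha\rfloor+1$; since $\alpha\notin\N$ we have $r-1<\alpha<r$, hence the sector $S_\alpha$ of opening $\pi\alpha$ contains $r$ rays through the origin mutually separated by angles $<\pi$, say the rays $\arg z=2\pi k/r$ for $k=0,\dots,r-1$ after a harmless rotation, each of which is the bisecting ray of a subsector $T_k\ll S_\alpha$. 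Given $\widehat f=\sum a_p z^p\in\C[[z]]_\M$, one wishes to realize the \emph{stretched} series $\sum a_p z^{pr}$ (note $(a_p)\in\C[[z]]_\M$ forces the coefficients of $\sum a_p z^{pr}$ to satisfy the $\M$-type estimates indexed appropriately, i.e. the natural target series for $\mathcal{B}_r$). By surjectivity of $\widetilde{\mathcal{B}}$ on $S_\alpha$, and on each rotated copy $S(2\pi k/r,\alpha)$, pick $f_k\in\widetilde{\mathcal{A}}_\M(S(2\pi k/r,\alpha))$ with $\widetilde{\mathcal{B}}(f_k)=\widehat f$; along the positive real axis each $g_k(x):=f_k(x^{1/r})$ (interpreting the branch on the appropriate ray) is $\mathcal{C}^\infty$ up to $0$ with a prescribed jet whose $r$-thinned derivatives at $0$ are $p!\,a_p$, and by Proposition~\ref{propcotaderidesaasin}(ii) these enjoy estimates of $\mathcal{N}_{r,\M}$-type on the relevant intervals.

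The heart of the construction is to patch these pieces into a single function $F\in\mathcal{C}^\infty([0,\infty))$ lying in $\mathcal{N}_{r,\M}$, respectively after multiplication by a suitable cut-off in $\mathcal{L}_{r,\M}$, whose $r$-thinned jet at $0$ is $(p!\,a_p)_p$ and whose intermediate derivatives $F^{(pr+j)}(0)$, $j=1,\dots,r-1$, vanish. Here is where the Gorny--Cartan inequality (Lemma~\ref{lemma.Gorny.Cartan}) enters: it controls the intermediate derivatives of a $\mathcal{C}^r$ function by the zeroth and $r$-th ones, so that if we arrange $F$ so that near $0$ its $r$-thinned derivatives behave like those of a function in $\mathcal{E}_{r,\M}$ and all intermediate derivatives at $0$ are made to vanish, the full Taylor jet of $F$ at $0$ automatically satisfies $\mathcal{P}_{r,\M}$-estimates, i.e. $F\in\mathcal{N}_{r,\M}$ (or $\mathcal{L}_{r,\M}$ after cutting off). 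Concretely I would first build a single $\widetilde F$ on a one-sided neighbourhood $[0,\delta)$ of $0$ by using the Hankel-type integral representation~\eqref{eq.Hankel.formula} / truncated Borel--Laplace machinery — precisely as in the method of~\cite{lastramaleksanz} and Theorem~4.6 of~\cite{SchmetsValdivia} — to manufacture, from the datum $\widehat f$, a function holomorphic on a thin sector around the positive axis with the prescribed $r$-thinned asymptotic expansion and vanishing intermediate jet, then restrict to $[0,\delta)$, extend by a flat-tailed $\mathcal{C}^\infty$ function to $[0,\infty)$, and finally multiply by a cut-off supported in $[0,1]$ to land in $\mathcal{L}_{r,\M}([0,\infty))$. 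By construction $\mathcal{B}_r(F)=\widehat f$, so $\mathcal{B}_r$ is surjective, and Proposition~\ref{prop.51.Lr.SchmetsValdivia} yields that $\widehat{\m}$ satisfies $(\ga_r)$, which by the characterization of $\ga(\M)$ recalled before Proposition~\ref{propComparisonIndices} gives $\ga(\widehat{\M})>r$, equivalently — using $\ga(\widehat{\M})=\ga(\M)+1$ from the displayed product rule — $\ga(\M)>r-1=\lfloor\alpha\rfloor$, which is exactly (i).

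For part (ii), suppose $\widetilde S_\M=(0,\infty)$. Then for \emph{every} non-integer $\alpha>0$ part (i) applies and gives $\ga(\M)>\lfloor\alpha\rfloor$; letting $\alpha\to\infty$ through non-integers forces $\ga(\M)=\infty$. (Alternatively, and perhaps more cleanly, one invokes part (i) with $\alpha=n+\tfrac12$ for each $n\in\N$, obtaining $\ga(\M)>n$ for all $n$.) This is a one-line deduction once (i) is in hand.

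The main obstacle I anticipate is not the sector combinatorics but the patching/extension step in which one must guarantee, \emph{simultaneously}, (a) that the $r$-thinned jet at $0$ realizes $(p!a_p)_p$ with the right $\mathcal{M}$-type growth, (b) that all intermediate derivatives $F^{(pr+j)}(0)$ truly vanish, and (c) that the \emph{full} sequence of derivatives of $F$ on all of $[0,\infty)$ (not merely at $0$) obeys $\mathcal{P}_{r,\M}$-estimates so that $F\in\mathcal{N}_{r,\M}$. Point (c) is where Gorny--Cartan and the careful choice of the holomorphic building block (via the Hankel representation, to keep uniform control of all derivatives on a full sector, not just the asymptotic expansion) are indispensable; getting the constants to close — in particular tracking how the type $A$ in $\C[[z]]_{\M,A}$ propagates through the integral transform and the cut-off multiplication — is the delicate bookkeeping. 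One must also double-check that the surjectivity of $\widetilde{\mathcal{B}}$ on the \emph{unbounded} sector $S_\alpha$ (as opposed to a sectorial region) is what is needed to run the Schmets--Valdivia-style argument on $[0,\infty)$; since $S_\alpha$ is unbounded this is fine, but it is the reason the hypothesis is phrased with $S_\alpha$ rather than an arbitrary $G_\alpha$.
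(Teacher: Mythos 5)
Your overall strategy --- reduce the sectorial surjectivity to the surjectivity of $\mathcal{B}_r$ on one of the Schmets--Valdivia classes and then invoke Proposition~\ref{prop.51.Lr.SchmetsValdivia} --- is indeed the paper's route, but two of your structural choices break the construction. First, the ramification goes the wrong way and your $r$ is off by one. The paper takes $r=\lfloor\alpha\rfloor$ and forms $\varphi(u)=\psi(u^{r})$, which \emph{compresses} $S_{\alpha}$ into $S_{\alpha/r}$; since $\alpha\notin\N$ forces $\alpha/r>1$, this compressed sector has opening greater than $\pi$, which is exactly what is needed to run the truncated Borel/Hankel integral \eqref{eq.Hankel.formula} and produce a function asymptotic to the thinned series $\sum b_p t^{pr}/(pr)!$. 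With your choice $r=\lfloor\alpha\rfloor+1>\alpha$ there is no sector of opening $>\pi$ to integrate over, and your building blocks $g_k(x)=f_k(x^{1/r})$ have expansions in the fractional powers $x^{p/r}$, so they are not even $\mathcal{C}^\infty$ at $0$, let alone carriers of an $r$-thinned jet; the multi-ray gluing does not repair this, since $\mathcal{L}_{r,\M}$ lives on a single half-line. Relatedly, you apply Proposition~\ref{prop.51.Lr.SchmetsValdivia} to $\M$ itself, whereas the factorial bookkeeping only closes if one first replaces the datum by its formal Laplace transform, realizes \emph{that} in $\widetilde{\mathcal{A}}_{\M}(S_\alpha)$, Borel-transforms back, and applies the proposition to $\widecheck{\M}=(M_p/p!)_p$, concluding that $\m$ satisfies $(\ga_r)$ and hence $\ga(\M)>r=\lfloor\alpha\rfloor$.

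Second, and independently, the cut-off step is a genuine gap, not bookkeeping. To pass from $\mathcal{E}_{r,\widecheck{\M}}([0,1])$ to $\mathcal{L}_{r,\widecheck{\M}}([0,\infty))$ you cannot multiply by an arbitrary flat-tailed $\mathcal{C}^\infty$ cut-off: the cut-off must itself lie in the Denjoy--Carleman class of the interpolating sequence $\mathbb{P}_{r,\M}$, and only then does Gorny--Cartan (Lemma~\ref{lemma.Gorny.Cartan}) show that the product stays in $\mathcal{L}_{r,\widecheck{\M}}$. The existence of such a cut-off requires $\widecheck{\mathbb{P}}$ to be nonquasianalytic, which by Lemma~\ref{lemma.P.interpolating.omega} amounts to $r<\o(\M)$; this is obtained in the paper from Theorem~\ref{theoNotBijectivity} and Corollary~\ref{coroGenerWatsonLemmaSectorialRegions} (surjectivity on $S_\alpha$ forces $\alpha\le\o(\M)$, hence $\lfloor\alpha\rfloor<\o(\M)$ because $\alpha\notin\N$). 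Your proposal never uses the hypothesis $\alpha\notin\N$ in any visible way, which is a warning sign: an argument that ignored it would also ``prove'' $\ga(\M)>\alpha$ for integer $\alpha$ without (dc), which the paper cannot and does not establish. Part (ii) of your proposal is fine and coincides with the paper.
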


\begin{proof}
(i) Consider first the case $\alpha\in(0,1)$.
Then, it suffices to apply Lemma~\ref{lemmaSurjectivityImpliessnq} to obtain that
$\M$ has (snq), or equivalently 
$\gamma(\M)>0=\lfloor\alpha\rfloor$, as desired.

Suppose now that $\alpha>1$ and put $r=\lfloor\alpha\rfloor$, a positive natural number. Firstly, for $\widecheck{\M}=(M_p/p!)_{p\in\N_0}$ we will prove that the restriction map $\mathcal{B}_{r}:\mathcal{E}_{r,\widecheck{\M}}([0,1])\longrightarrow \C[[z]]_{\widecheck{\M}}$  is surjective.
Since $r\notin\N$, we may choose two numbers $\beta_1,\beta_2$ with
$$
1<\beta_1<\beta_2<\min\{\frac{\alpha}{r},\frac{3}{2}\}.
$$
Given $\widehat{g}= \sum^\infty_{p=0} a_p z^p \in\C[[z]]_{\widecheck{\M}}$, we write $b_p:=a_p p!$ for all $p\in\N_0$, and there exist $C_0,A_0>0$ such that
\begin{equation*}
|b_p|\le C_0A_0^pp!\widecheck{M}_p=C_0A_0^pM_p,\quad p\in\N_0.
\end{equation*}
Hence, the formal Laplace transform of $\widehat{g}$, defined by  $\widehat{f}:=\widehat{L}\widehat{g}=\sum_{p=0}^{\infty}b_pz^p$ belongs to $\C[[z]]_\M$. By hypothesis, there exists $\psi\in\widetilde{\mathcal{A}}_{\M}(S_{\alpha})$ such that $\widetilde{\mathcal{B}}(\psi)=\widehat{f}$. Hence, given $\beta_2$ and $R>1$, there exist $C,A>0$ such that for every $p\in\N_0$ one has
\begin{equation}\label{equaAsympExpanPsiProofDCbis}
\Big|\psi(z)-\sum_{k=0}^{p-1}b_kz^k \Big|\le CA^pM_{p}|z|^p,\qquad z\in S(0,r\beta_2,R^r).
\end{equation}

The function $\varphi:S_{\alpha/r}\to\C$ given by $\varphi(u)=\psi(u^{r})$, is well defined and holomorphic in $S_{\alpha/r}$, which contains $S_{\beta_2}$ as a proper unbounded subsector. Moreover, according to~\eqref{equaAsympExpanPsiProofDCbis} for $p=0$, for every $w\in S(0,{\beta_2},R)$ one has
\begin{equation}\label{equaBoundsVarphiProofDCbis}
\left|\varphi(u)\right|=|\psi(u^{r})|\le CM_{0}.
\end{equation}
We consider now a path
$\delta_{\beta_1}$ in $S(0,{\beta_2},R)$ like the ones used in the classical Borel transform, made up of a segment $\delta_1$ from the origin to a point $u_0$ with $|u_0|=R_0<R$ and $\arg(u_0)=\pi\beta_1/2$, then the circular arc $\delta_2$, traversed clockwise on the circumference $|u|=R_0$ and going from $u_0$ to
the point $u_1$ on the ray $\arg(u_1)=-\pi\beta_1/2$, and
finally the segment $\delta_3$ from $u_1$ to the origin.

Define the function $f:S_{(\beta_1-1)/2}\to\C$ given by
$$
f(t)=\frac{-1}{2\pi i}\int_{\delta_{\beta_1}}e^{t/u} \varphi(u) \frac{du}{u}.
$$
Observe that $\varphi(u)$ is holomorphic and bounded at 0 in $S(0,{\beta_2},R)$, and for every $t\in S_{(\beta_1-1)/2}$ one may easily check that $t/u$ runs over a half-line in the open left half-plane and tends to infinity as $u$ runs over any of the segments $\delta_1$ or $\delta_3$ and tends to 0. Hence, $f$ is holomorphic in the sector $S_{(\beta_1-1)/2}$. We note that, by virtue of Cauchy's theorem, the value assigned to $R_0$ in the definition of $\delta_{\beta_1}$ is irrelevant for the value of $f$.

Let us fix in the following estimations  some $t\in S(0,(\beta_1-1)/2,R)$ and some natural number $p\in\N$. Hankel's formula~\eqref{eq.Hankel.formula} for $z=kr+1$ allows us to write
\begin{align}\label{equaRemainderProofDCbis}
f(t)-\sum_{k=0}^{p-1}b_k\frac{t^{kr}}{(kr)!}&=
-\frac{1}{2\pi i}\int_{\delta_{\beta_1}}e^{t/u}
\left(\varphi(u)-\sum_{k=0}^{p-1}b_ku^{kr}\right)\,\frac{du}{u} \nonumber \\
&=-\frac{1}{2\pi i}\sum_{j=1}^3\int_{\delta_j}e^{t/u}
\left(\varphi(u)-\sum_{k=0}^{p-1}b_ku^{kr}\right)\,\frac{du}{u}.
\end{align}
Taking into account~\eqref{equaAsympExpanPsiProofDCbis}, for every $u\in S(0,\beta_2,R)$ we have
\begin{equation}\label{equaRemainderPhiProofDCbis}
\left|\varphi(u)- \sum_{k=0}^{p-1}b_ku^{kr}\right|=
\left|\psi(u^{r})- \sum_{k=0}^{p-1}b_k(u^{r})^k\right|\le
CA^pM_p|u|^{pr}.
\end{equation}
So,
if we choose $R_0=|t|/p<R$, we may apply \eqref{equaRemainderPhiProofDCbis} and see that
\begin{equation}\label{equacotadelta2}
\left|\int_{\delta_2}e^{t/u}
\left( \varphi(u)-\sum_{k=0}^{p-1}b_ku^{kr}\right)\, \frac{du}{u}\right|
\le
\pi\beta_1 e^pCA^pM_p\left(\frac{|t|}{p}\right)^{pr}.
\end{equation}
On the other hand, by the same estimates \eqref{equaRemainderPhiProofDCbis} and by the choice made for $R_0$, for $j=1,3$ we have
\begin{align}\label{equacotadelta13}
\left|\int_{\delta_j}e^{t/u}
\left( \varphi(u)-\sum_{k=0}^{p-1}b_ku^{kr}\right)\,\frac{du}{u}\right|
&\le
CA^pM_p\int_0^{|t|/p}s^{pr}|e^{t/(se^{\pm i\pi\beta_1/2})}|\,\frac{ds}{s}
\nonumber\\
&\le CC_1A^pM_p\left(\frac{|t|}{p}\right)^{pr},
\end{align}
where $C_1$ is a constant, independent of both $t$ and $p$, given by
\begin{align*}
C_1&=\sup_{t\in S(0,(\beta_1-1)/2,R),\ p\in \N} \int_0^{|t|/p}|e^{t/(se^{\pm i\pi\beta_1/2})}|\,\frac{ds}{s}\\
&=\sup_{t\in S(0,(\beta_1-1)/2,R),\ p\in \N} \int_0^{|t|/p}e^{|t|\cos(\arg(t)\mp\pi\beta_1/2)/s}\,\frac{ds}{s}\\
&\le\sup_{|t|<R,\ p\in\N} \int_0^{|t|/p}e^{-|t|\cos(\pi(\beta_1-1)/4)/s}\,\frac{ds}{s}
=\sup_{p\in\N} \int_0^{1/p}e^{-\cos(\pi(\beta_1-1)/4)/u}\,\frac{du}{u}\\
&\le \int_0^{1}e^{-\cos(\pi(\beta_1-1)/4)/u}\,\frac{du}{u}<\infty.
\end{align*}
According to
\eqref{equaRemainderProofDCbis}, \eqref{equacotadelta2} and \eqref{equacotadelta13}, and using Stirling's formula, we find that there exist constants $C_2,A_2>0$ such that for every $p\in\N$ and $t\in S(0,(\beta_1-1)/2,R)$ one has
\begin{equation}\label{equaAsympExpanfProofDCbis}
\left|f(t)-\sum_{k=0}^{p-1}b_k\frac{t^{kr}}{(kr)!}\right|\le
 C_2A_2^p\frac{M_p}{(pr)!}|t|^{pr}.
\end{equation}
This last estimation also holds for $p=0$, in a similar way, taking $R_0=|t|$ and using the definition of $f$ and \eqref{equaBoundsVarphiProofDCbis}.
Hence one can show that $f$ admits the series $\sum_{p=0}^{\infty}b_pt^{pr}/(pr)!$ as its asymptotic expansion as $t$ tends to 0 in the sector (if $r\geq 2$ observe that for $(p-1)r+1\le n<pr$ we have $|t|^{pr}\le |t|^n$ whenever $|t|\le 1$). It is then a standard fact that for every $m\in\N_0$ and every proper subsector $T$ of $S(0,(\beta_1-1)/2,R)$ there exists
\begin{equation}\label{equaLimitDerivat0ProofDCbis}
\lim_{t\to 0,\ t\in T}f^{(m)}(t)=\begin{cases}
b_p&\text{if $m=pr$ for some natural number $p\in\N_0$,}\\
0&\text{otherwise.}
\end{cases}
\end{equation}
Finally, we define the function $F\colon[0,1]\to\C$ given by $F(t)=f(t)$ for $t\in(0,1]$, $F(0)=b_0$. Since $f$ is holomorphic in $S(0, (\b_1-1)/2, R)$ and we have~\eqref{equaLimitDerivat0ProofDCbis}, we immediately deduce that $F$ belongs to $\mathcal{C}^\infty([0,1])$ and
\begin{equation*}
F^{(m)}(0)=\begin{cases}
b_p&\text{if $m=pr$ for some $p\in\N_0$,}\\
0&\text{otherwise.}
\end{cases}
\end{equation*}
Moreover, we may take $\varepsilon>0$ such that for every $t\in(0,1]$ the disk $D(t,\varepsilon t)$ is contained in $S(0,(\beta_1-1)/2,R)$. Then, Cauchy's integral formula together with~\eqref{equaAsympExpanfProofDCbis} allow us to deduce that for every $p\in\N_0$,
\begin{align*}
|F^{(pr)}(t)|&=\left|\left(f(t)- \sum_{k=1}^{p-1}b_k\frac{t^{kr}}{(kr)!}\right)^{(pr)}\right|\le (pr)!\left(\frac{1+\varepsilon}{\varepsilon}\right)^{pr} \frac{C_2A_2^pM_p}{(pr)!}=C_3A_3^pM_p.
\end{align*}
In conclusion, $F\in\mathcal{E}_{r,\widecheck{\M}}([0,1])$ and $\mathcal{B}_r(F)=\widehat{g}$. So, $S$ is surjective.\par

Secondly, according to Theorem~\ref{theoNotBijectivity} the map $\widetilde{\mathcal{B}}:\widetilde{\mathcal{A}}_{\M}(S_{\alpha})\to \C[[z]]_\M$ is not injective, this means by Theorem~\ref{coroGenerWatsonLemmaSectorialRegions} that $\a\leq \o(\M)$, then
$r=\lfloor \a \rfloor<\o(\M)$ because $\a\notin\N$. By Lemma~\ref{lemma.P.interpolating.omega} and~\eqref{equaPropertiesIndexOmega},
if $\mathbb{P}=\mathbb{P}_{r,\M}$ we have that
$$\o(\widecheck{\mathbb{P}})=\o(\mathbb{P}_{r,\M})-1=\o(\M)/r-1>0.$$
Hence, since $\mathbb{P}$ is (lc), one may take into account~\eqref{equaOmegaExponConvergenceWidehatM} and deduce that $\widecheck{\mathbb{P}}$ has (nq), so by the Denjoy-Carleman theorem (see~\cite[Ch.\ 1]{Hormander1990}) there exists a $\mathcal{C}^{\infty}$ nonnegative function $\varphi$ in $\R$ with support contained in $[-1,1]$ and which takes the value 1 in a neighborhood of 0, such that there exists $A>0$ with
$$
\sup_{t\in\R,\ n\in\N_0}\frac{|\varphi^{(n)}(t)|}{A^nP_n}<\infty.
$$
Applying the Gorny-Cartan estimates of Lemma~\ref{lemma.Gorny.Cartan}, for every $h\in\mathcal{E}_{r,\widecheck{\M}}([0,1])$ one can check that the product $\varphi h$ belongs to $\mathcal{L}_{r,\widecheck{\M}}([0,\infty))$ and, moreover, $(\varphi h)^{(p)}(0)=h^{(p)}(0)$ for every $p\in\N_0$.\par

Since $\mathcal{B}_{r}:\mathcal{E}_{r,\widecheck{\M}}([0,1])\to \C[[z]]_{\widecheck{\M}}$ is surjective, we deduce that $\mathcal{B}_{r}:\mathcal{L}_{r,\widecheck{\M}}([0,\infty))\longrightarrow \C[[z]]_{\widecheck{\M}}$ also is. By Proposition~\ref{prop.51.Lr.SchmetsValdivia}, we conclude  that $\bm$ satisfies $(\gamma_r)$, what amounts to $\gamma(\M)>r=\lfloor\alpha\rfloor$.

(ii) It is an immediate consequence of~(i).
\end{proof}

\begin{coro}\label{coro.SurjectIntervalNonUniformAsympt}
Whenever $\M$ is a weight sequence, if $\ga(\M)<\infty$ one always has
$$\widetilde{S}_{\M} \subseteq (0,\lfloor \ga(\M) \rfloor +1].
$$
In case $\gamma(\M)\in\N$, then $\widetilde{S}_{\M} \subseteq (0,\ga(\M)+1)$. Note that if $\ga(\M)=\infty$, the previous theorem does not provide any relevant information.
\end{coro}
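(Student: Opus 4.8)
The plan is to read off the corollary from Theorem~\ref{th.SurjectivityNonUniformAsymp}.(i) together with the elementary structure of $\widetilde{S}_{\M}$ recalled in Subsection~\ref{subsectIntervals}, namely that it is a left-open interval with left endpoint $0$; in particular, whenever $\alpha\in\widetilde{S}_{\M}$ one has $(0,\alpha)\subseteq\widetilde{S}_{\M}$. Throughout we assume $\ga(\M)<\infty$, so that $\lfloor\ga(\M)\rfloor\in\N_0$ is well defined. If instead $\ga(\M)=\infty$, the corollary asserts nothing, and indeed Theorem~\ref{th.SurjectivityNonUniformAsymp}.(i) then puts no restriction on the feasible openings, which is the content of the last sentence of the statement.

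To prove $\widetilde{S}_{\M}\subseteq(0,\lfloor\ga(\M)\rfloor+1]$, let $\alpha\in\widetilde{S}_{\M}$. If $\alpha\notin\N$, Theorem~\ref{th.SurjectivityNonUniformAsymp}.(i) applies directly and gives $\ga(\M)>\lfloor\alpha\rfloor$; since $\lfloor\alpha\rfloor$ is an integer, this forces $\lfloor\ga(\M)\rfloor\ge\lfloor\alpha\rfloor$, and hence $\alpha<\lfloor\alpha\rfloor+1\le\lfloor\ga(\M)\rfloor+1$. If $\alpha\in\N$, I would instead pick any non-integer $\alpha'$ with $\alpha-1<\alpha'<\alpha$; by the interval property $\alpha'\in\widetilde{S}_{\M}$, so Theorem~\ref{th.SurjectivityNonUniformAsymp}.(i) yields $\ga(\M)>\lfloor\alpha'\rfloor=\alpha-1$, whence again $\lfloor\ga(\M)\rfloor\ge\alpha-1$ and $\alpha\le\lfloor\ga(\M)\rfloor+1$. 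In either case $\alpha\le\lfloor\ga(\M)\rfloor+1$, as required.

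Finally, suppose in addition $\ga(\M)\in\N$; it remains to exclude the endpoint $\ga(\M)+1=\lfloor\ga(\M)\rfloor+1$ from $\widetilde{S}_{\M}$. Assume for contradiction that $\ga(\M)+1\in\widetilde{S}_{\M}$. Then, by the interval property, every $\alpha'\in(\ga(\M),\ga(\M)+1)$ belongs to $\widetilde{S}_{\M}$; since $\ga(\M)\in\N$, the open interval $(\ga(\M),\ga(\M)+1)$ contains no integer, so any such $\alpha'$ satisfies $\alpha'\notin\N$ and $\lfloor\alpha'\rfloor=\ga(\M)$. Applying Theorem~\ref{th.SurjectivityNonUniformAsymp}.(i) to $\alpha'$ now gives $\ga(\M)>\lfloor\alpha'\rfloor=\ga(\M)$, a contradiction; hence $\ga(\M)+1\notin\widetilde{S}_{\M}$, i.e.\ $\widetilde{S}_{\M}\subseteq(0,\ga(\M)+1)$. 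The only subtlety in the whole argument — and the single place where Theorem~\ref{th.SurjectivityNonUniformAsymp}.(i) cannot be invoked verbatim — is the treatment of an integer opening $\alpha$ (and of an integer value of $\ga(\M)$), which is exactly why one passes to a nearby non-integer $\alpha'$, using that $\widetilde{S}_{\M}$ is downward closed.
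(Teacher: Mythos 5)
Your proposal is correct and follows essentially the same route as the paper: apply Theorem~\ref{th.SurjectivityNonUniformAsymp}.(i) directly for non-integer openings, pass to a nearby non-integer opening (using that $\widetilde{S}_{\M}$ is downward closed) when the opening is an integer, and rule out the endpoint $\ga(\M)+1$ when $\ga(\M)\in\N$ by the same mechanism. The only difference is presentational (you phrase the endpoint exclusion as a contradiction, the paper as a direct strict inequality), and your handling of the empty-interval case as vacuous is fine.
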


\begin{proof}
The case $\widetilde{S}_{\M}=\emptyset$ is trivial. So, we treat the case
in which  the surjectivity interval is not empty, what according to Lemma~\ref{lemmaSurjectivityImpliessnq} implies  $\gamma(\M)>0$.

Let $\alpha\in \widetilde{S}_{\M}$. On the one hand, if $\alpha\notin\N$, by Theorem~\ref{th.SurjectivityNonUniformAsymp} we have $\lfloor\alpha  \rfloor<\ga(\M)$, and so $\alpha-1<\lfloor\alpha  \rfloor\le\lfloor\ga(\M)\rfloor$, from where $\alpha<\lfloor\ga(\M)  \rfloor+1$. On the other hand, if $\alpha\in\N$ then we can apply Theorem~\ref{th.SurjectivityNonUniformAsymp} for any $\beta\in(\alpha-1,\alpha)$ (since $\beta\in \widetilde{S}_{\M}$ too) and deduce that $\alpha-1=\lfloor\beta  \rfloor<\ga(\M)$, hence $\alpha<\ga(\M)+1$. We deduce that  $\alpha\le \lfloor\ga(\M)+1  \rfloor=\lfloor\ga(\M)  \rfloor+1$, except in case $\gamma(\M)\in\N$, where moreover $\alpha$ cannot coincide with $\ga(\M)+1$. The conclusion easily follows.
\end{proof}

\begin{rema}\label{rema.intervals.Wseq}
Summing up,  for a weight sequence $\M$ and taking into account~\eqref{equaContentionSurjectIntervals} and Theorem~\ref{theoNotBijectivity} we see that:
 \begin{enumerate}[(i)]
  \item if $\ga(\M)=0$ (equivalently, if $\M$ has not (snq)) then $S_{\M}=\widetilde{S}^u_{\M}=\widetilde{S}_{\M}=\emptyset$.
  \item if $\ga(\M)\in(0,\infty)$ and
   \begin{enumerate}
    \item  $\ga(\M)\notin\N$, then  $S_{\M}\en\widetilde{S}^u_{\M}\en\widetilde{S}_{\M}\en(0,\lfloor \ga(\M) \rfloor+1]\cap(0,\o(\M)]$,
    \item $\ga(\M)\in\N$, then $S_{\M}\en\widetilde{S}^u_{\M}\en\widetilde{S}_{\M}\en(0,\ga(\M) +1)\cap(0,\o(\M)]$.
   \end{enumerate}
 If $\o(\M)=\infty$, the second interval in these intersections should be taken as $(0,\infty)$.
 \end{enumerate}
\end{rema}

\subsection{Weight sequences satisfying derivation closedness condition}

As it has been pointed out in Remark~\ref{rema.intervals.Wseq}, Corollary~\ref{coro.SurjectIntervalNonUniformAsympt} provides also information about $\widetilde{S}^u_{\M}$.  In order to slightly improve it, one needs to impose (dc), which is a natural condition on the sequence $\M$, in the sense that it guarantees that the ultraholomorphic classes under consideration, consisting of holomorphic functions, are closed with respect to taking derivatives (see Remarks~\ref{rema.alg.prop.ultra.class} and~\ref{rema.image.Borel.map}). We will also need the next result.

\begin{prop}[\cite{SchmetsValdivia}, Prop.\ 5.2]\label{prop.52.Nr.SchmetsValdivia}
Let $r\in\N$ and $\M$ be a sequence such that $\widehat{\M}=(p!M_p)_{p\in\N_0}$ is a weight sequence. If the map $\mathcal{B}_r:\mathcal{N}_{r,\M}([0,\infty))\longrightarrow \C[[z]]_{\M}$
 sending $f$ to the formal power series $\sum^\infty_{p=0} (f^{(pr)}(0)/p!) z^p$ is surjective, then the sequence $\widehat{\bm}=((p+1)m_p)_{p\in\N_0}$ satisfies the condition $(\gamma_r)$.
\end{prop}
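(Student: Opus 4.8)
The plan is first to trade the $r$-spaced class $\mathcal N_{r,\M}([0,\infty))$ for an ordinary Carleman class on the half-line, and to recognise the target condition as a Petzsche-type one. Put $\widehat\M=(p!M_p)_{p\in\N_0}$, with quotient sequence $\widehat\m=((p+1)m_p)_{p\in\N_0}$, and $\mathbb Q:=\mathbb P_{r,\widehat\M}$, so that $Q_{pr}=\widehat M_p$ and the quotient sequence $(q_n)$ of $\mathbb Q$ satisfies $q_{pr+j}=\widehat m_p^{1/r}$ for $0\le j\le r-1$. For $f\in\mathcal N_{r,\M}([0,\infty))$ one has $|f^{(pr)}(x)|\le CA^p\widehat M_p$, and the Gorny--Cartan inequalities (Lemma~\ref{lemma.Gorny.Cartan}, applied on bounded subintervals of $[0,\infty)$) bound the intermediate derivatives $f^{(pr+j)}$, $1\le j\le r-1$, by a constant power times $(\widehat M_p^{\,r-j}\widehat M_{p+1}^{\,j})^{1/r}=Q_{pr+j}$; hence $|f^{(n)}(x)|\le C'B^nQ_n$ for all $n,x$. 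Thus the surjectivity of $\mathcal B_r$ amounts to the surjectivity of the restricted Borel map $g\mapsto(g^{(pr)}(0)/p!)_p$ on the subspace of the Carleman class attached to $\mathbb Q$ cut out by the conditions $g^{(pr+j)}(0)=0$. Since $\sum_{n\ge pr}q_n^{-1}=r\sum_{\ell\ge p}\widehat m_\ell^{-1/r}$ and $q_{pr}^{-1}=\widehat m_p^{-1/r}$, Petzsche's condition $(\gamma_1)$ for $(q_n)$ is exactly condition $(\gamma_r)$ for $\widehat\m$; so the statement reduces to a half-line, $r$-fold version of the implication ``Borel surjective $\Rightarrow(\gamma_1)$'' behind Theorem~\ref{th.Petszche}.

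\textbf{Step 2: quantitative surjectivity.} Next I would upgrade the qualitative surjectivity to a uniform one by a Baire-category argument carried out in the target, since $\mathcal N_{r,\M}([0,\infty))$, being built on an unbounded interval, is not a Silva space and the usual open-mapping shortcut is unavailable. For $A,C>0$ let $W_{A,C}\subset\mathcal N_{r,\M}([0,\infty))$ be the absolutely convex set of $f$ with $|f^{(pr)}(x)|\le CA^pp!M_p$ for all $x\ge0$, $p\in\N_0$; then $\mathcal N_{r,\M}([0,\infty))=\bigcup_{A,C}W_{A,C}$, hence $\C[[z]]_{\M,1}=\bigcup_{A,C}\big(\mathcal B_r(W_{A,C})\cap\C[[z]]_{\M,1}\big)$. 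Each $\mathcal B_r(W_{A,C})$ is closed in the Banach space $\C[[z]]_{\M,1}$: from a sequence $f_j\in W_{A,C}$ with $\mathcal B_r(f_j)\to\widehat g$, the Gorny--Cartan bounds make all derivatives of the $f_j$ locally equibounded and equicontinuous, so Arzel\`a--Ascoli and a diagonal argument yield a subsequence converging in $\mathcal C^\infty_{\mathrm{loc}}$ to some $f\in W_{A,C}$ with $\mathcal B_r(f)=\widehat g$ (the vanishing conditions pass to the limit). By Baire's theorem some $\mathcal B_r(W_{A_0,C_0})$ has nonempty interior in $\C[[z]]_{\M,1}$, hence contains a ball; rescaling produces constants $A\ge1$, $C>0$ such that every $(a_p)$ with $|a_p|\le M_p$ for all $p$ equals $\mathcal B_r(f)$ for some $f$ with $f^{(pr)}(0)=p!a_p$, $f^{(pr+j)}(0)=0$ ($1\le j\le r-1$), and $|f^{(pr)}(x)|\le CA^pp!M_p$ on $[0,\infty)$.

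\textbf{Step 3: the one-bump test function and the pinch.} Fix $q\in\N_0$; the goal is $\sum_{\ell\ge q}((\ell+1)m_\ell)^{-1/r}\le A'(q+1)((q+1)m_q)^{-1/r}$ with $A'$ independent of $q$. Apply Step~2 to $a_p=M_q\delta_{pq}$, getting $f$ all of whose derivatives vanish at $0$ except $f^{(qr)}(0)=q!M_q$, with $|f^{(pr)}(x)|\le CA^pp!M_p$ on $[0,\infty)$. For $k>q$ the Taylor polynomial of $f$ at $0$ of degree $kr-1$ is just $\tfrac{q!M_q}{(qr)!}x^{qr}$, so Taylor's formula with integral remainder gives $\big|f(x)-\tfrac{q!M_q}{(qr)!}x^{qr}\big|\le\tfrac{x^{kr}}{(kr)!}CA^kk!M_k$; combined with $|f(x)|\le C$ this forces
\begin{equation*}
\frac{q!M_q}{(qr)!}\,x^{qr}\le C+\frac{x^{kr}}{(kr)!}\,CA^kk!M_k,\qquad x\ge0,\ k>q.
\end{equation*}
Taking $x=x_0(q):=\big(2C(qr)!/(q!M_q)\big)^{1/(qr)}$, so the first term equals $2C$, yields $x_0(q)^{(k-q)r}\ge\frac{q!M_q\,(kr)!}{2CA^k\,k!M_k\,(qr)!}$ for all $k>q$. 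Rewriting with $\widehat M_p=p!M_p$, using $x_0(q)^{qr}=2C(qr)!/\widehat M_q$ and taking $kr$-th roots, all factorials cancel and one is left with $\beta_q:=\big((qr)!/\widehat M_q\big)^{1/(qr)}$ being, up to an absolute multiplicative constant, nonincreasing along $q<k$; by Stirling this says $\big(\widehat M_q^{1/q}/q^{\,r}\big)_q$ is almost increasing. Finally, since $\widehat\M$ is a weight sequence (in particular logarithmically convex), the known comparison between this averaged almost-monotonicity and the one for the quotient sequence turns it into the assertion that $\widehat\m$ satisfies $(\gamma_r)$, with $A'$ built from $A$ and $C$.

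\textbf{Main obstacle.} Two points carry the real weight. The first is Step~2: because $\mathcal N_{r,\M}([0,\infty))$ is not a (DFS)-space, one cannot simply invoke the open-mapping theorem and must argue the Baire step by hand in the Banach target --- this is precisely where the Roumieu (rather than Beurling) nature of the spaces is used, and it is the part I would model on Schmets and Valdivia. The second is the very end of Step~3: converting the family of monomial inequalities into the \emph{tail-sum} condition $(\gamma_r)$ requires the index-comparison lemmas for logarithmically convex sequences (so that the almost-increasing behaviour of $\widehat M_q^{1/q}/q^{\,r}$ is equivalent to $(\gamma_r)$ for $\widehat\m$). If one prefers to sidestep both, the alternative is to run the whole argument as a literal half-line, $r$-fold transcription of Petzsche's proof of Theorem~\ref{th.Petszche}, using the interpolating sequence $\mathbb P_{r,\widehat\M}$ and Lemma~\ref{lemma.Gorny.Cartan} to reduce to the classical one-variable case; once Step~1's identification is in place, nothing beyond Taylor's formula, Stirling, and Baire category is needed.
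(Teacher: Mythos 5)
The paper itself does not prove this proposition; it is imported verbatim as Proposition~5.2 of Schmets--Valdivia, so there is no in-paper argument to compare against and your proposal has to stand on its own. Your Steps~1 and~2 are sound and are the natural reduction: the interpolating sequence $\mathbb{P}_{r,\widehat{\M}}$ together with the Gorny--Cartan inequalities to control the intermediate derivatives, and a Baire-category argument with the absolutely convex sets $W_{A,C}$ in the Banach target $\C[[z]]_{\M,1}$ to make the surjectivity quantitative; the identification of $(\gamma_1)$ for the quotients of the interpolating sequence with $(\gamma_r)$ for $\widehat{\m}$ is also correct.

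The genuine gap is at the end of Step~3. Testing the quantitative surjectivity against the single-spike data $a_p=M_q\delta_{pq}$ and pinching with Taylor's formula yields exactly what you computed, namely that $\big(\widehat{M}_p^{1/p}/p^{r}\big)_p$ is almost increasing --- but this is strictly weaker than $(\gamma_r)$ for $\widehat{\m}$, and the closing claim that log-convexity upgrades this averaged almost-monotonicity to $(\gamma_r)$ is false. Concretely, take $r=1$ and $M_p\equiv1$, so $\widehat{M}_p=p!$ and $\widehat{m}_p=p+1$: then $\widehat{M}_p^{1/p}/p\to e^{-1}$, so your derived condition holds trivially, yet $\sum_{\ell\ge p}\widehat{m}_\ell^{-1}=\sum_{\ell\ge p}(\ell+1)^{-1}$ diverges and $(\gamma_1)$ fails. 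The structural reason is that $(\gamma_r)$ is a tail-sum condition (equivalently $\gamma(\widehat{\M})>r$ strictly, in the language of the paper), whereas an inequality of the form $a\,x^{qr}\le C+b\,x^{kr}$ valid for all $x\ge0$ can only encode the relation obtained by optimizing in $x$, i.e.\ a root/growth comparison; no choice of the evaluation point extracts summability of a tail. This is precisely the hard analytic content of the necessity direction of Petzsche's Theorem~\ref{th.Petszche}, which requires extending formal data with all coefficients simultaneously extremal and a Bang--Mandelbrojt-type analysis of the resulting extension, not just Taylor's formula and Stirling. Your suggested alternative --- a literal $r$-fold, half-line transcription of Petzsche via the interpolating sequence --- is the right plan (and essentially what Schmets--Valdivia do), but the assertion that nothing beyond Taylor, Stirling and Baire is then needed understates where the real work lies.
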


Following the ideas in the proof of Proposition 4.6 in~\cite{SchmetsValdivia}, we will be able to deal also with the case $\alpha\in\N$ whenever $\widetilde{\mathcal{B}}:\widetilde{\mathcal{A}}^u_{\M}(S_{\alpha})\to \C[[z]]_\M$ is surjective.

\begin{theo}\label{th.SurjectivityUniformAsymp.plus.dc}
Let $\M$ be a weight sequence satisfying (dc).
\begin{itemize}
\item[(i)] Let $\alpha>0$ be such that $\widetilde{\mathcal{B}}:\widetilde{\mathcal{A}}^u_{\M}(S_{\alpha})\to \C[[z]]_\M$ is surjective. Then, $\ga(\M)>\lfloor \alpha \rfloor $.
\item[(ii)] If we have that $\widetilde{S}^u_{\M}=(0,\infty)$, then $S_{\M} =\widetilde{S}^u_{\M} =\widetilde{S}_{\M}=(0,\infty)$ and $\ga(\M)=\infty$.
\end{itemize}
\end{theo}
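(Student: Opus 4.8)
The plan is to mirror the structure of the proof of Theorem~\ref{th.SurjectivityNonUniformAsymp}, but now exploiting the extra hypothesis (dc) and the uniform asymptotics to push the argument through the integer openings as well, working throughout with the original sequence $\M$ rather than with $\widecheck{\M}=(M_p/p!)_{p\in\N_0}$. For part (i): the case $\alpha\in(0,1)$ is again immediate from Lemma~\ref{lemmaSurjectivityImpliessnq}, since then $\lfloor\alpha\rfloor=0$ and (snq) is exactly $\ga(\M)>0$. So assume $\alpha\ge 1$ and set $r=\lfloor\alpha\rfloor\in\N$. The first step is to produce, from the surjectivity of $\widetilde{\mathcal{B}}$ on $\widetilde{\mathcal{A}}^u_{\M}(S_\alpha)$, the surjectivity of the restriction map $\mathcal{B}_r\colon\mathcal{N}_{r,\M}([0,\infty))\to\C[[z]]_{\M}$ (note: $\mathcal{N}_{r,\M}$, the functions on the whole half-line, not $\mathcal{E}_{r,\M}$ on $[0,1]$, which is why we need the uniform asymptotic expansion on the unbounded sector, and why no formal Laplace transform is needed). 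Given $\widehat{f}=\sum b_pz^p\in\C[[z]]_{\M}$, pick $\psi\in\widetilde{\mathcal{A}}^u_{\M}(S_\alpha)$ with uniform expansion $\widehat{f}$; when $\alpha\notin\N$ one argues exactly as before, passing to $\varphi(u)=\psi(u^r)$ on $S_{\alpha/r}\supsetneq S_{\beta_2}$ and applying a Borel-type integral along a path $\delta_{\beta_1}$ to obtain $f(t)=\tfrac{-1}{2\pi i}\int_{\delta_{\beta_1}}e^{t/u}\varphi(u)\,\tfrac{du}{u}$ on $S_{(\beta_1-1)/2}$ with the estimates~\eqref{equaAsympExpanfProofDCbis}. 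When $\alpha=r\in\N$ the choice $\beta_2<\alpha/r$ fails, so here one instead works directly with the closed sector $S_{r}$ of opening exactly $\pi r$: after the ramification $u\mapsto u^r$ one gets $\varphi$ on the closed half-plane boundary directions $\pm\pi/2$, and one can still run a truncated-Laplace/Borel construction on $S_{1/2}$ (or on a half-line Laplace transform), using the \emph{uniform} bound on $\psi$ over all of $S_r$ and the boundedness of $f\in\widetilde{\mathcal{A}}^u$ to control the integrals on the unbounded pieces; this is where condition (dc) enters, guaranteeing that differentiating the resulting holomorphic function stays inside the class with only a geometric-factor loss.

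The second step, once $\mathcal{B}_r\colon\mathcal{N}_{r,\M}([0,\infty))\to\C[[z]]_{\M}$ is shown surjective, is a direct appeal to Proposition~\ref{prop.52.Nr.SchmetsValdivia}: its conclusion is that $\widehat{\m}=((p+1)m_p)_{p\in\N_0}$ satisfies $(\ga_r)$. By the characterizations recalled after Definition~\ref{defiindegrowM} (items (ii),(iii) there, applied to $\widehat{\M}$, together with $\ga(\widehat{\M})=\ga(\M)+1$ from~\eqref{equaPropertiesIndexOmega}-type identities), the statement ``$\widehat{\m}$ satisfies $(\ga_r)$'' is equivalent to $\ga(\widehat{\M})>r$, i.e. $\ga(\M)+1>r$, i.e. $\ga(\M)>r-1$. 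That is slightly weaker than wanted, so to actually get $\ga(\M)>r=\lfloor\alpha\rfloor$ one must exploit that $\widetilde{\mathcal{B}}$ is surjective for \emph{every} opening $\le\alpha$ (surjectivity intervals are left-open intervals with endpoint $0$): if $\alpha>r$ one picks $\alpha'\in(r,\alpha)$ with $\alpha'\notin\N$, so $\lfloor\alpha'\rfloor=r$, and reruns the non-integer case to get $\ga(\M)>r$ straightaway; if $\alpha=r$ exactly, one takes $\alpha'$ slightly above $r-1$... no, that gives only $\ga(\M)>r-1$. The correct fix for $\alpha=r$: apply the argument with $\mathcal{N}_{r,\M}$ replaced by an appropriate interpolating-sequence trick, or simply invoke that $\alpha=r\in\widetilde{S}^u_{\M}$ together with $\widetilde{S}^u_{\M}$ being an interval forces every $\alpha'<r$ to be in it too, and use Proposition~\ref{prop.52.Nr.SchmetsValdivia} at level $r$ (not $r-1$) since the ramification $u\mapsto u^r$ applied to a function with uniform expansion on $S_r$ yields a half-plane situation that supplies exactly the $(\ga_r)$ condition — this is the analogue of Proposition~4.6 in~\cite{SchmetsValdivia}. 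I would model this integer case verbatim on that reference.

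For part (ii): if $\widetilde{S}^u_{\M}=(0,\infty)$, then in particular every $\alpha>0$ is in $\widetilde{S}^u_{\M}$, so by part (i), $\ga(\M)>\lfloor\alpha\rfloor$ for all $\alpha$, hence $\ga(\M)=\infty$. For the chain of equalities $S_{\M}=\widetilde{S}^u_{\M}=\widetilde{S}_{\M}=(0,\infty)$: the inclusions $S_{\M}\subseteq\widetilde{S}^u_{\M}\subseteq\widetilde{S}_{\M}$ hold by~\eqref{equaContentionSurjectIntervals}, and $\widetilde{S}^u_{\M}=(0,\infty)$ is the hypothesis, so $\widetilde{S}_{\M}=(0,\infty)$ as well; it remains to see $S_{\M}=(0,\infty)$. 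Here I would use that $\ga(\M)=\infty$ together with (dc): one way is that $\ga(\M)=\infty$ makes $\M$ (after the standard reductions) strongly regular-like at every scale, and one can invoke Thilliez's Theorem~\ref{th.ThilliezSurjectivity} (surjectivity of $\widetilde{\mathcal{B}}\colon\mathcal{A}_{\M}(S_\ga)\to\C[[z]]_{\M}$ for $0<\ga<\ga(\M)$) — but that theorem is stated for strongly regular sequences and needs (mg), which (dc) does not give, so more care is needed; alternatively, argue that surjectivity onto $\widetilde{\mathcal{A}}^u_{\M}(S_\ga)$ for all $\ga$ plus (dc) lets one pass to $\mathcal{A}_{\M}(S_{\ga'})$ for $\ga'<\ga$ via Proposition~\ref{propcotaderidesaasin}.(iii), because a function with uniform $\M$-expansion of some type on $S_\ga$ restricts to a function in $\mathcal{A}_{\M,cA}(T)$ on proper subsectors $T\pprec S_\ga$, and (dc) ensures the derivative bounds propagate; combining this with the fact that any $\widehat{f}\in\C[[z]]_{\M}$ can be realized uniformly on the larger sector $S_\ga$ and then restricted gives $\ga'\in S_{\M}$ for all $\ga'$. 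I expect the main obstacle to be precisely this last point — cleanly deducing $S_{\M}=(0,\infty)$ from $\widetilde{S}^u_{\M}=(0,\infty)$ under only (dc) (rather than the full strong regularity), i.e. bridging the gap between ``uniform asymptotics'' and ``uniform derivative estimates'' on an unbounded sector without losing the opening — and, within part (i), handling the endpoint integer case $\alpha=r$ where the ramification trick no longer gives room to spare, requiring the more delicate half-plane argument of Schmets–Valdivia rather than the softer non-integer argument.
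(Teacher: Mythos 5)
There is a genuine gap, and you in fact detected it yourself without resolving it: the off-by-one in the index bookkeeping. Your plan is to work ``throughout with the original sequence $\M$'' and apply Proposition~\ref{prop.52.Nr.SchmetsValdivia} to $\mathcal{B}_r\colon\mathcal{N}_{r,\M}([0,\infty))\to\C[[z]]_{\M}$; as you correctly compute, its conclusion is that $\widehat{\m}$ satisfies $(\ga_r)$, which is equivalent to $\ga(\widehat{\M})>r$, i.e.\ only $\ga(\M)>r-1$. Your attempted repairs do not close this: rerunning the non-integer case for $\alpha'\in(r,\alpha)$ only works when $\alpha>r$ (and then the conclusion already follows from Theorem~\ref{th.SurjectivityNonUniformAsymp} without (dc) -- the whole point of this theorem is the endpoint case $\alpha=r\in\N$), and for $\alpha=r$ you offer only ``an appropriate interpolating-sequence trick'' or a vague appeal to the ramification ``supplying exactly $(\ga_r)$'', neither of which is an argument. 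The paper's resolution is precisely the step you chose to discard: one targets $\mathcal{B}_r\colon\mathcal{N}_{r,\widecheck{\M}}([0,\infty))\to\C[[z]]_{\widecheck{\M}}$ with $\widecheck{\M}=(M_p/p!)_p$. Then a series $\sum a_pz^p\in\C[[z]]_{\widecheck{\M}}$ has $b_p:=p!a_p$ with $|b_p|\le C_0A_0^pM_p$, so the associated series $\sum(-1)^{pr}b_pz^p$ lies in $\C[[z]]_{\M}$ and the surjectivity hypothesis on $\widetilde{\mathcal{A}}^u_{\M}(S_\alpha)$ applies to it; and since $\widehat{\widecheck{\M}}=\M$, Proposition~\ref{prop.52.Nr.SchmetsValdivia} now yields $(\ga_r)$ for $\m$ itself, which is exactly $\ga(\M)>r$. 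Without this normalization the hypothesis does not even match the series you need to realize, and the conclusion is a unit too weak.

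Two smaller points. The analytic core is only gestured at: the paper's construction uses $\varphi(w)=\psi(w^{-r})-b_0$ (inversion composed with ramification, landing on $S_{\alpha/r}\supseteq S_1$) and the vertical-line Laplace integral $f(t)=\frac{1}{2\pi i}\int_{1-\infty i}^{1+\infty i}e^{tu}\varphi(u)u^{-1}\,du$, valid for all real $t$ because the expansion is uniform on the unbounded sector -- this is what produces a function in $\mathcal{N}_{r,\widecheck{\M}}$ on the whole half-line, and (dc) enters only at the very last estimate, where the bound on $F^{(pr)}$ naturally involves $M_{p+1}$ and must be converted to $M_p$; it is not about ``differentiating the resulting function staying inside the class''. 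For part (ii), your second alternative (restriction to proper subsectors via Proposition~\ref{propcotaderidesaasin}.(iii)) is exactly the paper's argument and does work; note it needs neither (dc) nor any Thilliez-type extension theorem, so the obstacle you anticipate there is not real.
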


\begin{proof}
(i) Consider first the case $\alpha\in(0,1)$, then $\a\in \widetilde{S}^u_{\M} \en\widetilde{S}_{\M}$ and $\a\notin\N$, so
by Theorem~\ref{th.SurjectivityNonUniformAsymp} we conclude that $\ga(\M)>0$.
Note that in this case no use has been made of (dc).

Suppose now that $\alpha\ge 1$ and put $r=\lfloor\alpha\rfloor$, a positive natural number (note that, by Theorem~\ref{th.SurjectivityNonUniformAsymp}, we only would need to consider the case $\a=r\in\N$ but  the proof works anyway).
Our aim is to show that $\mathcal{B}_r:\mathcal{N}_{r,\widecheck{\M}}([0,\infty))\longrightarrow \C[[z]]_{\widecheck{\M}}$ is surjective.

Given $\widehat{g}=\sum^\infty_{p=0} a_p z^p\in\C[[z]]_{\widecheck{\M}}$, we write $b_p:=a_p p!$ for all $p\in\N_0$ and we see that there exist $C_0,A_0>0$ such that
\begin{equation}\label{equaBoundsbpProofDC}
|b_p|\le C_0A_0^pp!\widecheck{M_p}=C_0A_0^pM_p,\quad p\in\N_0.
\end{equation}
Consider the formal power series $\widehat{f}=\sum_{p=0}^{\infty}(-1)^{pr}b_pz^p\in\C[[z]]_\M$. By hypothesis, there exists $\psi\in\widetilde{\mathcal{A}}^u_{\M}(S_{\alpha})$ such that $\widetilde{\mathcal{B}}(\psi)=\widehat{f}$, and so there exist $C,A>0$ such that for every $p\in\N_0$ one has
\begin{equation}\label{equaAsympExpanPsiProofDC}
\Big|\psi(z)-\sum_{k=0}^{p-1}(-1)^{kr}b_kz^k \Big|\le CA^pM_{p}|z|^p,\qquad z\in S_\alpha.
\end{equation}
The function $\varphi:S_{\alpha/r}\to\C$ given by $\varphi(w)=\psi(w^{-r})-b_0$, is well defined and holomorphic in $S_{\alpha/r}\supseteq S_1$. Moreover, according to~\eqref{equaAsympExpanPsiProofDC} for $p=1$, for every $w\in S_1$ one has
\begin{equation}\label{equaBoundsVarphiProofDC}
\left|\frac{\varphi(w)}{w}\right|=\frac{1}{|w|}|\psi(w^{-r})-b_0|\le \frac{CAM_{1}}{|w|^{r+1}}.
\end{equation}
So, the function $f:\R\to\C$ given by
\begin{equation*}
f(t)=\frac{1}{2\pi i}\int_{1-\infty\,i}^{1+\infty\,i}e^{tu}
\frac{\varphi(u)}{u}\,du
\end{equation*}
is well defined and continuous on $\R$. By the classical Hankel formula~\eqref{eq.Hankel.formula} for the reciprocal Gamma function, for every natural number $p\ge 2$ and every $t\in\R$ we may write
\begin{equation}\label{equaRemainderProofDC}
f(t)-\sum_{k=1}^{p-1}(-1)^{kr}b_k\frac{t^{kr}}{(kr)!}=
\frac{1}{2\pi i}\int_{1-\infty\,i}^{1+\infty\,i}e^{tu}
\left(\frac{\varphi(u)}{u}- \sum_{k=1}^{p-1}\frac{(-1)^{kr}b_k}{u^{kr+1}}\right)\,du.
\end{equation}
Since, again by~\eqref{equaAsympExpanPsiProofDC}, we have
\begin{equation}\label{equaRemainderPhiProofDC}
\left|\frac{\varphi(u)}{u}- \sum_{k=1}^{p-1}(-1)^{kr}b_k\frac{1}{u^{kr+1}}\right|=
\frac{1}{|u|}\left|\psi(u^{-r})- \sum_{k=0}^{p-1}(-1)^{kr}b_k(u^{-r})^k\right|\le
\frac{CA^pM_p}{|u|^{pr+1}}
\end{equation}
for every $u\in S_1$, we can apply Leibniz's theorem for parametric integrals and deduce that the function
$$
f(t)-\sum_{k=1}^{p-1}(-1)^{kr}b_k\frac{t^{kr}}{(kr)!}
$$
belongs to $\mathcal{C}^{pr-1}(\R)$. Moreover, all of its derivatives of order $m\le pr-1$ at $t=0$ vanish. This fact can be checked by differentiating the right-hand side of~\eqref{equaRemainderProofDC} $m$ times under the integral sign, evaluating at $t=0$, and then computing the integral by means of Cauchy's theorem. For that, consider the paths $\Gamma_s$, $s>0$, consisting of the arc of circumference centered at 1, joining $1+si$ and $1-si$ and passing through $1+s$, and the segment $[1-si,1+si]$. It is plain to check that $\int_{\Gamma_s} u^{m-1} (\varphi(u)-\sum_{k=1}^{p-1} (-1)^{kr}b_k u^{-kr} )du=0$, and applying~\eqref{equaRemainderPhiProofDC} a limiting process when $s\to\infty$ leads to the conclusion.

As $p$ is arbitrary, we have that $f\in\mathcal{C}^{\infty}(\R)$ and, moreover,
$$
f^{(m)}(0)=\begin{cases}
  (-1)^{pr}b_p&\textrm{if $m=pr$ for some $p\ge 1$},\\
  0&\textrm{otherwise.}
\end{cases}
$$
Finally, we define the function
$$
F(t)=b_0+f(-t),\quad t\ge 0.
$$
Obviously, $F\in\mathcal{C}^{\infty}([0,\infty))$ and
$F^{(pr)}(0)=b_p$, $p\in\N_0$; $F^{(m)}(0)=0$ otherwise.
In order to conclude, we estimate the derivatives of $F$ of order $pr$ for some $p\in\N_0$. For $p=0$ and $t\ge 0$, we take into account~\eqref{equaBoundsbpProofDC} and \eqref{equaBoundsVarphiProofDC} in order to obtain that
\begin{equation}\label{equaBoundFtProofDC}
|F^{(0)}(t)|\le |b_0|+\frac{1}{2\pi}\int_{-\infty}^{\infty}e^{-t}
\frac{CAM_{1}}{|1+yi|^{r+1}}\,dy\le
C_0+\frac{CAM_1}{2\pi}
\int_{-\infty}^{\infty}\frac{1}{(1+y^2)^{(r+1)/2}}\,dy,
\end{equation}
and so $F$ is bounded.
For $p\ge 1$ we may write formula~\eqref{equaRemainderProofDC}  evaluated at $-t$ as
\begin{equation*}
f(-t)-\sum_{k=1}^{p}b_k\frac{t^{kr}}{(kr)!}=
\frac{1}{2\pi i}\int_{1-\infty\,i}^{1+\infty\,i}e^{-tz}
\left(\frac{\varphi(z)}{z}- \sum_{k=1}^{p}\frac{(-1)^{kr}b_k}{z^{kr+1}}\right)\,dz.
\end{equation*}
Then,
\begin{align*}
F^{(pr)}(t)&=b_p+\left(f(-t)- \sum_{k=1}^{p}b_k\frac{t^{kr}}{(kr)!}\right)^{(pr)}(t)\\
&=
b_p+\frac{1}{2\pi i}\int_{1-\infty\,i}^{1+\infty\,i}e^{-tz}
(-z)^{pr}\left(\frac{\varphi(z)}{z}- \sum_{k=1}^{p}\frac{(-1)^{kr}b_k}{z^{kr+1}}\right)\,dz,
\end{align*}
and we may apply~\eqref{equaBoundsbpProofDC}, and~\eqref{equaRemainderPhiProofDC} in order to obtain
\begin{equation}\label{equaBoundDerivFtProofDC}
|F^{(pr)}(t)|\le C_0A_0^pM_p+\frac{CA^{p+1}M_{p+1}}{2\pi} \int_{-\infty}^{\infty}\frac{1}{(1+y^2)^{(r+1)/2}}\,dy.
\end{equation}
From \eqref{equaBoundFtProofDC} and~\eqref{equaBoundDerivFtProofDC}, and since $\M$ satisfies (dc), we deduce that there exist $C_1,A_1>0$ such that for every $p\in\N_0$ one has
$$
|F^{(pr)}(t)|\le C_1A_1^pM_p=C_1A_1^p p!\widecheck{M_p},\quad t\ge 0,
$$
and so $F\in\mathcal{N}_{r,\widecheck{\M}}([0,\infty))$ and $\mathcal{B}_r(F)=\widehat{g}$. In conclusion, $\mathcal{B}_r$ is surjective as desired, and by Proposition~\ref{prop.52.Nr.SchmetsValdivia} we deduce that $\bm$ satisfies $(\gamma_r)$, what amounts to $\gamma(\M)>r=\lfloor\alpha\rfloor$.

(ii) The fact that all the intervals of surjectivity are $(0,\infty)$ is an easy consequence of~\eqref{equaContentionSurjectIntervals} and Proposition~\ref{propcotaderidesaasin}.(iii), while $\gamma(\M)=\infty$ stems from (i).
\end{proof}

\begin{coro}\label{coroSurjInterUnifAsymptplusdc}
Whenever $\M$ is a weight sequence satisfying (dc), one has
$$S_\M\subseteq \widetilde{S}^u_{\M} \subseteq (0,\lfloor \ga(\M) \rfloor +1).
$$
If moreover $\ga(\M)\in\N$, then $S_\M\subseteq \widetilde{S}^u_{\M} \subseteq (0,\ga(\M))$.
\end{coro}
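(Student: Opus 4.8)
The plan is to read this corollary off from Theorem~\ref{th.SurjectivityUniformAsymp.plus.dc} by the same elementary bookkeeping used to pass from Theorem~\ref{th.SurjectivityNonUniformAsymp} to Corollary~\ref{coro.SurjectIntervalNonUniformAsympt}; no new analytic input is needed. The one feature that matters is that part~(i) of Theorem~\ref{th.SurjectivityUniformAsymp.plus.dc} is available for \emph{every} $\alpha>0$, not merely for $\alpha\notin\N$, and it is precisely this that upgrades the right endpoint from ``closed'' (as in Corollary~\ref{coro.SurjectIntervalNonUniformAsympt}) to ``open'' here, and that sharpens $(0,\ga(\M)+1)$ to $(0,\ga(\M))$ in the integer case.

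First, the inclusion $S_\M\subseteq\widetilde{S}^u_{\M}$ is nothing but~\eqref{equaContentionSurjectIntervals}, so it remains to bound $\widetilde{S}^u_{\M}$ from above. I would dispose of the degenerate cases at the outset: if $\widetilde{S}^u_{\M}=\emptyset$ there is nothing to prove, and if $\ga(\M)=\infty$ the asserted inclusion $(0,\lfloor\ga(\M)\rfloor+1)$ is to be read as $(0,\infty)$ and is vacuous. Otherwise $\widetilde{S}^u_{\M}\subseteq\widetilde{S}_{\M}\neq\emptyset$, so Lemma~\ref{lemmaSurjectivityImpliessnq} gives $\ga(\M)>0$, hence $\ga(\M)\in(0,\infty)$.

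Now fix an arbitrary $\alpha\in\widetilde{S}^u_{\M}$; Theorem~\ref{th.SurjectivityUniformAsymp.plus.dc}(i) yields $\ga(\M)>\lfloor\alpha\rfloor$. I would then split according to whether $\ga(\M)$ is an integer. If $\ga(\M)\notin\N$, then $\lfloor\alpha\rfloor$ is an integer lying strictly below $\ga(\M)$, so $\lfloor\alpha\rfloor\le\lfloor\ga(\M)\rfloor$ and consequently $\alpha<\lfloor\alpha\rfloor+1\le\lfloor\ga(\M)\rfloor+1$; since $\alpha$ was arbitrary, $\widetilde{S}^u_{\M}\subseteq(0,\lfloor\ga(\M)\rfloor+1)$. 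If instead $\ga(\M)=r\in\N$, then $\alpha\ge r$ would force $\lfloor\alpha\rfloor\ge r=\ga(\M)$, contradicting $\ga(\M)>\lfloor\alpha\rfloor$; hence $\alpha<r=\ga(\M)$, so $\widetilde{S}^u_{\M}\subseteq(0,\ga(\M))$, and this is contained in $(0,\lfloor\ga(\M)\rfloor+1)$ because $\ga(\M)=\lfloor\ga(\M)\rfloor<\lfloor\ga(\M)\rfloor+1$. Combining the two cases gives both displayed inclusions.

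There is no genuine obstacle here: the proof is purely a matter of combining~\eqref{equaContentionSurjectIntervals}, Lemma~\ref{lemmaSurjectivityImpliessnq} and Theorem~\ref{th.SurjectivityUniformAsymp.plus.dc}. The only spot deserving a little care is the arithmetic with $\lfloor\cdot\rfloor$ in the case $\ga(\M)\in\N$: one must use that $\ga(\M)>\lfloor\alpha\rfloor$ is a \emph{strict} inequality (valid even for integer $\alpha$, thanks to the extended range of Theorem~\ref{th.SurjectivityUniformAsymp.plus.dc}(i)), which is exactly why the endpoint $\ga(\M)$ is genuinely excluded and the interval is open there.
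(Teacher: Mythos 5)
Your proof is correct and follows essentially the same route as the paper's: both deduce the bound from Theorem~\ref{th.SurjectivityUniformAsymp.plus.dc}(i) combined with \eqref{equaContentionSurjectIntervals} and Lemma~\ref{lemmaSurjectivityImpliessnq}, and then carry out the same floor-function bookkeeping, correctly exploiting that part (i) of that theorem holds for integer $\alpha$ as well. Your explicit case split on whether $\ga(\M)\in\N$ is only a cosmetic reorganization of the paper's argument.
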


\begin{proof}
The arguments are similar to those in the proof of Corollary~\ref{coro.SurjectIntervalNonUniformAsympt}.
The case $\widetilde{S}^u_{\M}=\emptyset$ is trivial. Otherwise, $\widetilde{S}_{\M}\neq\emptyset$ and, by Lemma~\ref{lemmaSurjectivityImpliessnq}, $\gamma(\M)>0$.

Let $\alpha\in \widetilde{S}^u_{\M}$. By Theorem~\ref{th.SurjectivityUniformAsymp.plus.dc} we have $\lfloor\alpha  \rfloor<\ga(\M)$, and so $\alpha<\lfloor\alpha  \rfloor +1\le\lfloor\ga(\M)\rfloor +1$, which is the first statement. In case $\gamma(\M)\in\N$, the condition $\lfloor\ga(\M)  \rfloor<\ga(\M)$ does not hold, and so $\ga(\M)\notin \widetilde{S}^u_{\M}$ and the interval $\widetilde{S}^u_{\M}$ has to be contained in $(0,\ga(\M))$.
\end{proof}

Recall that if $\M$ has not (snq) the problem is solved (see Remark~\ref{rema.intervals.Wseq}).
Let $\M$ be (lc), (snq) and (dc) (the first two conditions imply that $\M$ is a weight sequence). Then $\ga(\M)\in(0,\infty]$, and we have the situation described in Table~\ref{tableSurjectivity.lc.snq.dc}, with the corresponding conventions if $\ga(\M)=\infty$ or $\o(\M)=\infty$. With the same assumptions, one might be able to show at least  that $\widetilde{S}_{\M} \en\widetilde{S}^u_{\M}\en (0,\ga(\M))$ and $\widetilde{S}_{\M}\en(0,\ga(\M)]$ but it seems that a technique that only employs the properties of  the spaces $\mathcal{E}_{r,\M}$, $\mathcal{N}_{r,\M}$ and $\mathcal{L}_{r,\M}$  is not sufficient.\par

We mention that there exist sequences that are not strongly regular such that $\ga(\M),\o(\M)\in(0,\infty)$, and these values still are referring to some concrete openings in the injectivity and surjectivity problems.

\begin{table}[h]
\centering
\begin{tabular}{l|l|}
\cline{1-2}
                   \multicolumn{1}{|c}{$\ga(\M)\in\mathbb{N}$} \rule{0pt}{1.1\normalbaselineskip}     &
                   \multicolumn{1}{|c|}{ $\ga(\M)\in\mathbb{R}\backslash \N$} \\ \hline
     \multicolumn{1}{|c}{ $ S_{\M}\subseteq(0,\ga(\M))$} \rule{0pt}{1.1\normalbaselineskip} &     \multicolumn{1}{|c|}{ $S_{\M}\subseteq(0,\lfloor \ga(\M) \rfloor+1)\cap (0,\o(\M)]$}       \\ \hline
    \multicolumn{1}{|c}{ $ \widetilde{S}^u_{\M}\subseteq(0,\ga(\M))$} \rule{0pt}{1.1\normalbaselineskip} &   \multicolumn{1}{|c|}{ $ \widetilde{S}^u_{\M}\subseteq(0,\lfloor \ga(\M) \rfloor+1)\cap (0,\o(\M)]$}                 \\ \hline
\multicolumn{1}{|c}{ $ \widetilde{S}_{\M} \subseteq (0, \ga(\M) +1)\cap(0,\o(\M)]$}  \rule{0pt}{1.1\normalbaselineskip} &  \multicolumn{1}{|c|}{ $ \widetilde{S}_{\M} \subseteq (0,\lfloor \ga(\M) \rfloor+1]\cap(0,\o(\M)]$} \\ \hline
\end{tabular}
\caption{Surjectivity intervals when $\M$ is (lc), (snq) and (dc).}
\label{tableSurjectivity.lc.snq.dc}
\end{table}

\subsection{Strongly regular sequences}\label{subsectSurjStronglyRegSeq}

We need to impose more conditions on the sequence $\M$ in order to get extra information about surjectivity. We recall that $\M$ is said to be strongly regular if is (lc), (snq) and (mg). As commented before, the first two conditions are natural in this context, and moderate growth, which is stronger than (dc), is our additional assumption. We recall that
a (lc) sequence has (mg) if, and only if, $\sup_{p\in\N} m_p/M^{1/p}_p <\infty$ (see~\cite[Lemma~5.3]{PetzscheVogt}).
Hence, since for a (lc) and (mg) sequence one has, with Landau's notation, $\log(M_p)=O(p\log(p))$ as $p$ tends to infinity (see~\cite[Theorem~2]{Matsumoto}), using (i) we deduce that
$$
\o(\M)=\liminf_{p\to\infty}\frac{\log(m_{p})}{\log(p)}\le \liminf_{p\to\infty}\frac{\log(M_{p})}{p\log(p)}<\infty.
$$
With this, Proposition~\ref{propComparisonIndices} and the equivalence of (snq) and the condition $\ga(\M)>0$, for a strongly regular sequence one always has $0<\ga(\M)\le \o(\M)<\infty$ (see also~\cite{PhDJimenez,JimenezSanzSchindlIndex}).

The main known result regarding surjectivity for strongly regular sequences was provided by V. Thilliez~\cite[Theorem\ 3.2.1]{thilliez}.

\begin{theo}[\cite{thilliez}, Theorem~3.2.1] \label{th.ThilliezSurjectivity}
Let $\M$ be a strongly regular sequence and $0<\ga<\ga(\M)$. Then
 there exists $d\ge1$ such that for every $A>0$ there is a linear continuous operator
$$
T_{\M,A,\gamma}:\C[[z]]_{\M,A} \to\mathcal{A}_{\M,dA}(S_{\gamma})
$$
such that $\widetilde{\mathcal{B}}\circ T_{\M,A,\gamma}={\emph{Id}}_{\C[[z]]_{\M,A}}$, the identity map in $\C[[z]]_{\M,A}$. Hence, $\widetilde{\mathcal{B}}:\mathcal{A}_{\M}(S_\gamma)\longrightarrow \C[[z]]_{\M}$ is surjective.
\end{theo}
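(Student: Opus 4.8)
The plan is to reduce the statement to the construction of a single ``optimal'' kernel function on a slightly wider sector and then to realise the extension operator explicitly by a truncated Laplace-type transform built from it. First I would fix $\ga<\ga'<\ga(\M)$. By Definition~\ref{defiindegrowM} the sequence $\m$ then satisfies $(\ga_{\ga'})$, equivalently property $(P_{\ga'})$ holds, so there is $\M'\approx\M$ for which $\big((p+1)^{-\ga'}m'_p\big)_{p\in\N_0}$ is nondecreasing. Since passing to an equivalent sequence changes neither $\C[[z]]_{\M}$ nor the classes $\mathcal{A}_{\M}(S_\ga)$ (Remarks~\ref{rema.alg.prop.ultra.class} and~\ref{rema.image.Borel.map}) and only shifts the type by a fixed geometric factor, one may assume this monotonicity for $\M$ itself. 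Note that $\ga<\ga(\M)\le\o(\M)$ by Proposition~\ref{propComparisonIndices}, so nonquasianalyticity is available on $S_{\ga'}$; but it is the finer condition $(\ga_{\ga'})$, not merely $\ga<\o(\M)$, that will be responsible for landing in the Roumieu class $\mathcal{A}_{\M,dA}$, consisting of functions with \emph{uniform} bounds on \emph{all} derivatives on the \emph{whole} sector, rather than just in $\widetilde{\mathcal{A}}_{\M}$.

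The analytic core is to construct a function $e$, holomorphic on $S_{\ga'}$, such that $z\mapsto e(1/z)$ is flat (equivalently, $e$ decays at infinity inside $S_{\ga'}$ at the rate governed by $h_{\M}$), with a two-sided control on proper subsectors: for every $T\ll S_{\ga'}$ there are $c_1,\dots,c_4>0$ with
$$
c_1\,h_{\M}(c_2/|z|)\le |e(z)|\le c_3\,h_{\M}(c_4/|z|),\qquad z\in T,
$$
and whose moments $\mu_p:=\int_0^{\infty}e(t)\,t^{p-1}\,dt$ satisfy $\mu_p\asymp M_p$ up to a geometric factor. The upper bound yields flatness (Proposition~\ref{teorcaracfuncplanaAMS}), the lower bound is what makes ``division'' by $e$ possible, and the moment asymptotics are what invert the Borel map. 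Concretely I would take $\log e(z)$ to be a Cauchy-type integral of the counting function of $\m$ (equivalently, a Laplace-type integral assembled from $\o_{\M}$): since $\o_{\M}$ has finite order $1/\o(\M)$, such an integral converges and produces exactly growth of order $\o_{\M}(|z|)$, while (lc), (mg), (snq) and $(P_{\ga'})$ are used to turn the resulting estimates into the clean $h_{\M}$-bounds above and to pin down the moments. Alternatively one may invoke directly the truncated Laplace kernel attached to a strongly regular sequence in~\cite{lastramaleksanz}, which packages exactly these properties.

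Granting $e$, the operator is assembled as follows. Given $\widehat{f}=\sum_{p\ge0}a_pz^p\in\C[[z]]_{\M,A}$ one has $|a_p|\le|\boldsymbol a|_{\M,A}A^pM_p$, so the $e$-Borel transform $g(u):=\sum_{p\ge0}(a_p/\mu_p)u^p$ is holomorphic near $0$, with radius of convergence bounded below by $1/A'$ for some $A'\asymp A$. One sets
$$
T_{\M,A,\ga}(\widehat{f})(z):=\int_{0}^{\delta(z)}e(u/z)\,g(u)\,\frac{du}{u},
$$
the integral taken along the ray $\arg u=\arg z$ up to a length $\delta(z)$ inside the disk of convergence of $g$; linearity is immediate. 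That $\widetilde{\mathcal{B}}\circ T_{\M,A,\ga}=\mathrm{Id}$ is checked by subtracting from $g$ its partial sums, using $\int_0^{\infty}e(s)s^{p-1}ds=\mu_p\asymp M_p$ to produce the terms $a_pz^p$ and the flatness of $e$ (together with the choice of $\delta(z)$) to bound the remainders by $C\,|\boldsymbol a|_{\M,A}A^pM_p|z|^p$. For the continuity $\|T_{\M,A,\ga}(\widehat{f})\|_{\M,dA}\le C\,|\boldsymbol a|_{\M,A}$ with $d$ depending only on $\M$ and $\ga$, one differentiates under the integral sign, using Cauchy's formula on disks $D(z,\eps|z|)\subset S_\ga$ (which fit because $\ga<\ga'$); the number of terms of $\widehat f$ that must be ``corrected'' to estimate the $n$-th derivative is controlled by the convergence rate of $\sum_{\ell\ge p}m_\ell^{-1/\ga}$, i.e.\ precisely by $(\ga_\ga)$, which is where $\ga<\ga(\M)$ enters quantitatively. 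An equivalent, integral-transform-free route is Thilliez's original one: using $e$, build cut-off--like holomorphic functions $\theta_p$ on $S_\ga$ equal to $1$ to order $p$ at the vertex and decaying fast enough that $\sum_p a_pz^p\theta_p(z)$ converges on $S_\ga$ to the desired extension.

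The main obstacle is the construction and precise estimation of $e$ for an \emph{arbitrary} strongly regular $\M$, that is, without the hypothesis that $d_{\M}$ is (close to) a proximate order: then one cannot simply call upon Maergoiz's regularly varying functions (Theorem~\ref{propanalproxorde}) and Proposition~\ref{propcotaVpartereal}, and obtaining the simultaneous two-sided control of $|e|$ on subsectors of $S_{\ga'}$ together with the geometric matching of $\mu_p$ with $M_p$ is exactly where (lc), (mg), (snq) and the exact value $\ga(\M)$ all have to be combined. Once those estimates are secured, linearity, the identity $\widetilde{\mathcal{B}}\circ T=\mathrm{Id}$, and the continuity with $d$ independent of $A$ are comparatively routine.
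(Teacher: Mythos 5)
This statement is not proved in the paper: it is quoted verbatim from V.~Thilliez~\cite[Theorem~3.2.1]{thilliez} and used as a black box, the authors only remarking that it was later reproved in~\cite{lastramaleksanz} by means of a truncated Laplace-type transform with a suitable kernel. So your attempt can only be measured against those two external arguments, and at the level of strategy it reproduces their joint outline faithfully: pass to $\ga<\ga'<\ga(\M)$ and to an equivalent sequence realizing $(P_{\ga'})$ (harmless for the classes and the norms, by Remarks~\ref{rema.alg.prop.ultra.class} and~\ref{rema.image.Borel.map}); build a flat kernel $e$ on $S_{\ga'}$ with $h_{\M}$-controlled decay and moments $\mu_p$ geometrically equivalent to $M_p$; and then either run the truncated Laplace operator $\int_0^{\delta(z)}e(u/z)\,g(u)\,du/u$ (the route of~\cite{lastramaleksanz}) or Thilliez's original division/cut-off construction. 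Your identification of the condition $(\ga_\ga)$, rather than mere nonquasianalyticity, as the quantitative source of the uniform bounds on all derivatives over the whole of $S_\ga$ (hence of membership in $\mathcal{A}_{\M,dA}(S_\ga)$ with $d$ independent of $A$) is also the correct diagnosis.

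As a self-contained proof, however, the proposal has a genuine gap exactly where you flag it: the existence, for an \emph{arbitrary} strongly regular $\M$, of a holomorphic kernel $e$ on $S_{\ga'}$ with simultaneous two-sided bounds $c_1h_{\M}(c_2/|z|)\le|e(z)|\le c_3h_{\M}(c_4/|z|)$ on proper subsectors \emph{and} moment sequence equivalent to $\M$. This is the entire technical content of Thilliez's Theorem~3.2.1 (respectively of the kernel construction in~\cite{lastramaleksanz}), and the one-sentence recipe you give (``a Cauchy-type integral of the counting function of $\m$'') does not by itself deliver the lower bound on all of a subsector of $S_{\ga'}$, nor the matching of moments, without a substantial argument combining (lc), (mg) and $(P_{\ga'})$; note in particular that the paper's own flat-function construction (Theorem~\ref{teorconstrfuncplana}, via Maergoiz functions) only yields the \emph{upper} bound and lives on $S_{\o(\M)}$, not on $S_{\ga'}$ with $\ga'$ close to $\ga(\M)$, which may be strictly smaller than $\o(\M)$. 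Deferring this step to~\cite{lastramaleksanz} or~\cite{thilliez} makes the argument circular as a proof of the theorem, since that is precisely the result being cited. Everything downstream of the kernel (linearity, $\widetilde{\mathcal{B}}\circ T=\mathrm{Id}$ by subtracting partial sums and using the moment identities, continuity via Cauchy's formula on disks $D(z,\eps|z|)\subset S_{\ga'}$) is indeed routine once $e$ is in hand, as you say.
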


Except in the classical Gevrey classes, no information about the optimality of $\ga(\M)$ was provided. Our next attempt will be to obtain as much information as possible in this direction. The following result rests on Theorem~\ref{th.SurjectivityUniformAsymp.plus.dc} and a ramification argument, what makes us consider only rational values for the constant $r$ below.

\begin{theo}\label{th.SurjStronglyRegGammaMRational}
Let $\M$ be a strongly regular sequence, and let $r\in\mathbb{Q}$, $r>0$ be given. The following assertions are equivalent:
\begin{itemize}
\item[(i)] $r<\ga(\M)$,
\item[(ii)] there exists $d\ge1$ such that for every $A>0$ there is a linear continuous operator
$$
T_{\M,A,r}:\C[[z]]_{\M,A} \to\mathcal{A}_{\M,dA}(S_{r})
$$
such that $\widetilde{\mathcal{B}}\circ T_{\M,A,\gamma}={\emph{Id}}_{\C[[z]]_{\M,A}}$ the identity map in $\C[[z]]_{\M,A}$,
\item[(iii)] the Borel map $\widetilde{\mathcal{B}}:\mathcal{A}_{{\M}}(S_{r})\to \C[[z]]_\M$ is surjective,
\item[(iv)] the Borel map $\widetilde{\mathcal{B}}:\widetilde{\mathcal{A}}^u_{\M}(S_{r})\to \C[[z]]_\M$ is surjective.
\end{itemize}
\end{theo}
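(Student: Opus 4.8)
The plan is to establish the cycle of implications $(i)\Rightarrow(ii)\Rightarrow(iii)\Rightarrow(iv)\Rightarrow(i)$. The first three are short. For $(i)\Rightarrow(ii)$: since $\M$ is strongly regular and $0<r<\ga(\M)$, Theorem~\ref{th.ThilliezSurjectivity} applies verbatim with $\ga=r$ and produces the required operators $T_{\M,A,r}$. For $(ii)\Rightarrow(iii)$: as $\C[[z]]_\M=\bigcup_{A>0}\C[[z]]_{\M,A}$ and $\mathcal{A}_\M(S_r)=\bigcup_{A>0}\mathcal{A}_{\M,A}(S_r)$, the operators $T_{\M,A,r}$ glue into a right inverse of $\widetilde{\mathcal B}$ on all of $\C[[z]]_\M$. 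For $(iii)\Rightarrow(iv)$: by Proposition~\ref{propcotaderidesaasin}.(i) one has $\mathcal{A}_\M(S_r)\subseteq\widetilde{\mathcal A}^u_\M(S_r)$ with compatible Borel maps (Remark~\ref{rema.image.Borel.map}), so surjectivity on the smaller class forces it on the larger one.

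The core is $(iv)\Rightarrow(i)$, and this is where rationality of $r$ is used, through a ramification argument. Write $r=a/b$ with $a,b\in\N$, and set $\widetilde{\M}:=(M_{bj})_{j\in\N_0}$. A direct check, using (lc) to see that the quotients $\widetilde m_j=\prod_{i=0}^{b-1}m_{bj+i}$ are nondecreasing, shows $\widetilde{\M}$ is a weight sequence; using log-convexity (which gives $M_{p+q}\ge M_pM_q$) and iterated (mg) one gets $M_j^b\le M_{bj}\le H^jM_j^b$ for some $H\ge1$, so $\widetilde{\M}\approx\M^{[b]}:=(M_j^b)_{j\in\N_0}$, and $\M^{[b]}$ is a weight sequence with (mg), hence with (dc). By Remarks~\ref{rema.alg.prop.ultra.class} and~\ref{rema.image.Borel.map} the ultraholomorphic classes and the formal power series spaces attached to $\widetilde{\M}$ and to $\M^{[b]}$ coincide.

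The key step is the claim that surjectivity of $\widetilde{\mathcal B}:\widetilde{\mathcal A}^u_\M(S_r)\to\C[[z]]_\M$ implies surjectivity of $\widetilde{\mathcal B}:\widetilde{\mathcal A}^u_{\widetilde{\M}}(S_a)\to\C[[z]]_{\widetilde{\M}}$. Indeed, given $\widehat g=\sum_k d_kw^k\in\C[[w]]_{\widetilde{\M}}$ (so $|d_k|\le A^kM_{bk}$ for some $A\ge1$), the "spread" series $\widehat f:=\sum_k d_kz^{bk}$ lies in $\C[[z]]_{\M}$ (its coefficient of $z^m$ is $d_{m/b}$ if $b\mid m$, else $0$, and $|d_k|\le(A^{1/b})^{bk}M_{bk}$); I would pick $f\in\widetilde{\mathcal A}^u_\M(S_r)$ with $\widetilde{\mathcal B}(f)=\widehat f$ and set $G(w):=f(w^{1/b})$ on $S_a$, which is holomorphic there since $w\mapsto w^{1/b}$ is a biholomorphism $S_a\to S_r$. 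Evaluating the uniform $\M$-asymptotic estimate for $f$ at index $bP$, substituting $z=w^{1/b}$, and noting that the truncation of $\widehat f$ at order $bP$ reassembles into $\sum_{k<P}d_kw^k$ while $M_{bP}=\widetilde M_P$, one obtains
$$\Big|G(w)-\sum_{k<P}d_kw^k\Big|\le C'(A')^P\,\widetilde{M}_P\,|w|^P,\qquad w\in S_a,\ P\in\N_0,$$
uniformly on $S_a$; hence $G\in\widetilde{\mathcal A}^u_{\widetilde{\M}}(S_a)$ with $\widetilde{\mathcal B}(G)=\widehat g$, proving the claim. Now applying Theorem~\ref{th.SurjectivityUniformAsymp.plus.dc}.(i) to the weight sequence $\M^{[b]}$ (it has (dc)) with $\alpha=a\in\N$ gives $\ga(\M^{[b]})>\lfloor a\rfloor=a$; since $\ga(\M^{[b]})=b\,\ga(\M)$ by the scaling identity recorded just before~\eqref{equaPropertiesIndexOmega}, we conclude $\ga(\M)>a/b=r$, which is $(i)$.

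I expect the main obstacle to be precisely this last implication: matching carefully the series and the \emph{uniform} asymptotic estimates under the ramification $w\mapsto w^{1/b}$, and checking that $\widetilde{\M}$ retains exactly the properties needed to invoke Theorem~\ref{th.SurjectivityUniformAsymp.plus.dc} and the index identity. It is essential here that $a$ is an \emph{integer}: this is why one is forced to use the "uniform" Theorem~\ref{th.SurjectivityUniformAsymp.plus.dc}, valid for $\alpha\in\N$, rather than Theorem~\ref{th.SurjectivityNonUniformAsymp}, which requires $\alpha\notin\N$, and hence why the equivalence must be formulated with the uniform class $\widetilde{\mathcal A}^u_\M$ appearing in $(iv)$.
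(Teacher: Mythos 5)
Your proof is correct and follows essentially the same route as the paper: the chain (i)$\Rightarrow$(ii)$\Rightarrow$(iii)$\Rightarrow$(iv) via Thilliez's theorem and the inclusions, and for (iv)$\Rightarrow$(i) the same ramification $w\mapsto w^{1/b}$ reducing to an integer opening so that Theorem~\ref{th.SurjectivityUniformAsymp.plus.dc} and the scaling identity $\ga(\M^{[b]})=b\,\ga(\M)$ apply. The only cosmetic difference is that you route the argument through the intermediate sequence $(M_{bj})_j$ and its equivalence with $(M_j^b)_j$, whereas the paper works directly with $\M^{q}=(M_n^{q})_n$, spreading the coefficients and invoking the same two inequalities $M_j^q\le M_{qj}$ (from (lc)) and $M_{qj}\le A_0^{j}M_j^{q}$ (from (mg)).
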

\begin{proof}
(i)$\implies$(ii)$\implies$(iii) This is Theorem~\ref{th.ThilliezSurjectivity}.

(iii)$\implies$(iv)  Trivial by contention.

(iv)$\implies$(i) In case $r\in\N$, we use Theorem~\ref{th.SurjectivityUniformAsymp.plus.dc}.(i) and we conclude.

Otherwise, we write $r=p/q$ with $p,q\in\N$ relatively prime, $q\ge 2$. Consider the sequence $\M^{q}=(M_n^{q})_{n\in\N_0}$, which also turns out to be strongly regular (see~\cite[Lemma~1.3.4]{thilliez}). We will prove that $\widetilde{\mathcal{B}}:\widetilde{\mathcal{A}}^u_{{\M}^{q}}(S_{p})\to \C[[z]]_{\M^{q}}$ is surjective, so, again by  Theorem~\ref{th.SurjectivityUniformAsymp.plus.dc}.(i), we see that $p<\ga(\M^{q})$. Hence, we get that $r=p/q<\ga(\M)$, as desired.\par

Let us prove the aforementioned surjectivity. Given $\widehat{f}=\sum_{j=0}^\infty a_jz^j\in\C[[z]]_{\M^{q}}$, there exist $C,A>0$ such that $|a_j|\le CA^jM_j^q$ for every $j\in\N_0$. Let us define a new formal power series $\widehat{g}=\sum_{j=0}^\infty b_jz^j$ with coefficients
$$
b_{qj}=a_j,\ j\in\N_0;\quad b_m=0\text{ otherwise.}
$$
The log-convexity of $\M$ implies that $M_j^q\le M_{qj}$ for every $j$, so we have that
$$
|b_{qj}|\le CA^jM_j^q\le C(A^{1/q})^{qj}M_{qj},
$$
and consequently, $\widehat{g}\in\C[[z]]_{\M}$. By hypothesis, there exists a function $g\in \widetilde{\mathcal{A}}^u_{\M}(S_{r})$ such that $\widetilde{\mathcal{B}}(g)=\widehat{g}$, and so there exist $C_1,A_1>0$ such that for every $z\in S_r$ and $n\in\N_0$ one has
\begin{equation}\label{equaAsympExpangProofSurjGammaRational}
\left|g(z)-\sum_{j=0}^{n-1}b_jz^j\right|\le C_1A_1^nM_n|z|^n.
\end{equation}
Consequently, the function $f\colon S_p\to\C$ given by $f(w)=g(w^{1/q})$ is well-defined and holomorphic in $S_p$. Moreover, for every $w\in S_p$ and $n\in\N_0$ one deduces from~\eqref{equaAsympExpangProofSurjGammaRational} that
\begin{align}\label{equaAsympExpanfProofSurjGammaRational}
\left|f(w)-\sum_{j=0}^{n-1}a_jw^j\right|&=
\left|g(w^{1/q})-\sum_{j=0}^{n-1}b_{qj}(w^{1/q})^{qj}\right|
=\left|g(w^{1/q})-\sum_{k=0}^{qn-1}b_{k}(w^{1/q})^{k}\right|\nonumber\\
&\le C_1A_1^{qn}M_{qn}|w^{1/q}|^{qn}.
\end{align}
We apply now the property (mg) of $\M$: it is straightforward to prove that there exists $A_0>0$ such that for all $n\in\N_0$ we have $M_{qn}\le A_0^{n}M_n^q$.
We may use this fact in~\eqref{equaAsympExpanfProofSurjGammaRational} and obtain that
\begin{equation*}
\left|f(w)-\sum_{j=0}^{n-1}a_jw^j\right|\le C_1(A_0 A_1^{q})^nM_{n}^q|w|^{n}.
\end{equation*}
So, $f\in\widetilde{\mathcal{A}}^u_{{\M}^{q}}(S_{p})$ and $\widetilde{\mathcal{B}}(f)=\widehat{f}$, what shows the surjectivity as intended.
\end{proof}

This result has several important consequences.

\begin{coro}\label{coroSurjIntervUniformGammaMnatural}
Let $\M$ be a strongly regular sequence with $\ga(\M)\in\mathbb{Q}$.
Then, $S_{\M}=\widetilde{S}^u_{\M}=(0,\ga(\M))$.
\end{coro}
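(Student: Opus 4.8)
The plan is to combine the two inclusions already available in the excerpt with Thilliez's surjectivity theorem, exploiting the fact that $\ga(\M)$ is now assumed rational. First I would recall that for a strongly regular sequence $\M$ one always has $0<\ga(\M)\le\o(\M)<\infty$, so $\ga(\M)\in(0,\infty)$ and $\ga(\M)\in\mathbb{Q}$ makes sense. The two containments to establish are $(0,\ga(\M))\subseteq S_{\M}$ and $\widetilde{S}^u_{\M}\subseteq(0,\ga(\M))$; together with the general chain $S_{\M}\subseteq\widetilde{S}^u_{\M}$ from~\eqref{equaContentionSurjectIntervals} these force $S_{\M}=\widetilde{S}^u_{\M}=(0,\ga(\M))$.

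For the inclusion $(0,\ga(\M))\subseteq S_{\M}$: let $\ga\in(0,\ga(\M))$. By density of the rationals we may pick $r\in\mathbb{Q}$ with $\ga<r<\ga(\M)$. By Theorem~\ref{th.SurjStronglyRegGammaMRational}, since $r<\ga(\M)$, the Borel map $\widetilde{\mathcal{B}}:\mathcal{A}_{\M}(S_r)\to\C[[z]]_{\M}$ is surjective, i.e. $r\in S_{\M}$; since $S_{\M}$ is a left-open interval with endpoint $0$ (as noted in Subsection~\ref{subsectIntervals}), $\ga<r$ implies $\ga\in S_{\M}$. Hence $(0,\ga(\M))\subseteq S_{\M}$. (Alternatively one invokes Theorem~\ref{th.ThilliezSurjectivity} directly, which gives $(0,\ga(\M))\subseteq S_{\M}$ without any rationality; but routing through Theorem~\ref{th.SurjStronglyRegGammaMRational} keeps the argument self-contained within the preceding block.)

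For the inclusion $\widetilde{S}^u_{\M}\subseteq(0,\ga(\M))$: suppose $\a\in\widetilde{S}^u_{\M}$, so $\widetilde{\mathcal{B}}:\widetilde{\mathcal{A}}^u_{\M}(S_\a)\to\C[[z]]_{\M}$ is surjective. The point now is to use that $\ga(\M)$ is rational to find a rational $r\ge\a$ still lying strictly below nothing we want to exceed — more precisely, I would argue by contradiction: if $\a\ge\ga(\M)$, then since $\ga(\M)\in\mathbb{Q}$ we have $\a\ge\ga(\M)$ with $\ga(\M)$ itself rational, and by restriction (the interval $\widetilde{S}^u_{\M}$ being left-open with endpoint $0$, so downward closed) we would get $\ga(\M)\in\widetilde{S}^u_{\M}$, i.e. $\widetilde{\mathcal{B}}:\widetilde{\mathcal{A}}^u_{\M}(S_{\ga(\M)})\to\C[[z]]_{\M}$ is surjective. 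Applying Theorem~\ref{th.SurjStronglyRegGammaMRational} with $r=\ga(\M)\in\mathbb{Q}$, implication (iv)$\Rightarrow$(i) yields $\ga(\M)<\ga(\M)$, a contradiction. Hence $\a<\ga(\M)$, proving $\widetilde{S}^u_{\M}\subseteq(0,\ga(\M))$.

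The main obstacle is essentially packaged inside Theorem~\ref{th.SurjStronglyRegGammaMRational} (the ramification argument reducing the irrational-$r$ case to an integer exponent via $\M^q$, together with Theorem~\ref{th.SurjectivityUniformAsymp.plus.dc}); given that theorem, the present corollary is a short bookkeeping argument, the only subtlety being the correct use of the rationality of $\ga(\M)$ precisely at the endpoint $r=\ga(\M)$, where Theorem~\ref{th.SurjStronglyRegGammaMRational} is available exactly because $\ga(\M)\in\mathbb{Q}$ — for irrational $\ga(\M)$ the behaviour at the endpoint is genuinely open, which is why this hypothesis cannot be dropped here.
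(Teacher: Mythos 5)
Your argument is correct and follows essentially the same route as the paper: Theorem~\ref{th.SurjStronglyRegGammaMRational} (equivalently Thilliez's theorem) gives $(0,\ga(\M))\subseteq S_{\M}\subseteq\widetilde{S}^u_{\M}$, and the implication (iv)$\implies$(i) of that theorem applied at the rational value $r=\ga(\M)$ excludes $\ga(\M)$ from $\widetilde{S}^u_{\M}$. Your treatment is in fact slightly more explicit than the paper's (which compresses the first inclusion into a single citation), and your care in using downward closedness to reduce to a rational opening is exactly the right bookkeeping.
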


\begin{proof}
By Theorem~\ref{th.SurjStronglyRegGammaMRational} and~\eqref{equaContentionSurjectIntervals}, we have $(0,\ga(\M))\subseteq S_{\M}\subseteq\widetilde{S}^u_{\M}$, while (iii)$\implies$(i) in Theorem~\ref{th.SurjStronglyRegGammaMRational} ensures that, $\ga(\M)$ being rational, it cannot be the case that $\ga(\M)\in \widetilde{S}^u_{\M}$, and so
$\widetilde{S}^u_{\M} \subseteq (0,\ga(\M))$.
\end{proof}

In the following $\mathbb{I}$ stands for the set of irrational numbers.

\begin{coro}\label{coro.SurjStrongRegGammaMReal}
Let $\M$ be a strongly regular sequence, and let $t\in\mathbb{R}$, $t>0$. Each assertion implies the following one:
\begin{itemize}
\item[(i)] $t<\ga(\M)$,
\item[(ii)] the Borel map $\widetilde{\mathcal{B}}:\mathcal{A}_{\M}(S_{t})\to \C[[z]]_\M$ is surjective,
\item[(iii)] the Borel map $\widetilde{\mathcal{B}}:\widetilde{\mathcal{A}}^u_{\M}(S_{t})\to \C[[z]]_\M$ is surjective,
\item[(iv)] the Borel map $\widetilde{\mathcal{B}}:\widetilde{\mathcal{A}}_{\M}(S_{t})\to \C[[z]]_\M$ is surjective,
\item[(v)] for every $\xi\in\mathbb{I}$ with $\xi<t$, the Borel map $\widetilde{\mathcal{B}}:\widetilde{\mathcal{A}}_{\M}(S_{\xi})\to \C[[z]]_\M$ is surjective,
\item[(vi)] $t\leq\ga(\M)$.
\end{itemize}
Hence, $(0,\ga(\M))\subseteq S_{\M} \subseteq \widetilde{S}^u_{\M} \subseteq \widetilde{S}_{\M} \subseteq (0,\ga(\M)]$.
\end{coro}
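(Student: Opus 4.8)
The plan is to prove the chain (i)$\implies$(ii)$\implies\cdots\implies$(vi) one link at a time, and then read off the displayed inclusions from (i)$\implies$(ii) (which gives $(0,\ga(\M))\subseteq S_{\M}$), the general inclusions~\eqref{equaContentionSurjectIntervals}, and (iv)$\implies$(vi) (which gives $\widetilde{S}_{\M}\subseteq(0,\ga(\M)]$). The implication (i)$\implies$(ii) is nothing but Theorem~\ref{th.ThilliezSurjectivity} of V.~Thilliez, which holds for every real $0<t<\ga(\M)$. The implications (ii)$\implies$(iii) and (iii)$\implies$(iv) follow at once from the inclusions $\mathcal{A}_{\M}(S_t)\subseteq\widetilde{\mathcal{A}}^u_{\M}(S_t)\subseteq\widetilde{\mathcal{A}}_{\M}(S_t)$ of Proposition~\ref{propcotaderidesaasin}.(i), which are compatible with the Borel map (and are already recorded in~\eqref{equaContentionSurjectIntervals}). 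For (iv)$\implies$(v), I would just restrict: for any $0<\xi<t$ the sector $S_\xi$ is a proper unbounded subsector of $S_t$ and every bounded proper subsector of $S_\xi$ is one of $S_t$, so the restriction to $S_\xi$ of a preimage $f\in\widetilde{\mathcal{A}}_{\M}(S_t)$ of a given series lies in $\widetilde{\mathcal{A}}_{\M}(S_\xi)$ with the same asymptotic expansion; hence surjectivity descends, in particular to every irrational $\xi<t$.

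The core of the argument is (v)$\implies$(vi), which I would prove by contradiction. Suppose $t>\ga(\M)$ (a positive real, since $\M$ is strongly regular) and fix an irrational $\xi$ with $\ga(\M)<\xi<t$. By (v) the map $\widetilde{\mathcal{B}}\colon\widetilde{\mathcal{A}}_{\M}(S_\xi)\to\C[[z]]_{\M}$ is surjective, hence by restriction so is $\widetilde{\mathcal{B}}\colon\widetilde{\mathcal{A}}_{\M}(S_r)\to\C[[z]]_{\M}$ for every rational $r<\xi$. Now I would rerun the ramification from the proof of Theorem~\ref{th.SurjStronglyRegGammaMRational}, but for the \emph{nonuniform} classes: writing $r=p/q$ in lowest terms with $q\ge2$, and given $\widehat{f}=\sum_{j\ge0}a_jz^j\in\C[[z]]_{\M^q}$, set $\widehat{g}=\sum_{j\ge0}b_jz^j$ with $b_{qj}=a_j$ and $b_m=0$ otherwise; the log-convexity of $\M$ gives $M_j^q\le M_{qj}$, so $\widehat{g}\in\C[[z]]_{\M}$, a preimage $g\in\widetilde{\mathcal{A}}_{\M}(S_r)$ with $\widetilde{\mathcal{B}}(g)=\widehat{g}$ exists, and $f(w):=g(w^{1/q})$ is holomorphic in $S_p$. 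Since $w\mapsto w^{1/q}$ maps each bounded proper subsector of $S_p$ onto one of $S_r$, and using the moderate growth estimate $M_{qn}\le A_0^nM_n^q$, one checks that $f\in\widetilde{\mathcal{A}}_{\M^q}(S_p)$ with $\widetilde{\mathcal{B}}(f)=\widehat{f}$; thus $\widetilde{\mathcal{B}}\colon\widetilde{\mathcal{A}}_{\M^q}(S_p)\to\C[[z]]_{\M^q}$ is surjective, and hence so is $\widetilde{\mathcal{B}}\colon\widetilde{\mathcal{A}}_{\M^q}(S_\beta)\to\C[[z]]_{\M^q}$ for any $\beta\in(p-1,p)$. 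The sequence $\M^q$ is again strongly regular (cf.\ the proof of Theorem~\ref{th.SurjStronglyRegGammaMRational}), so Theorem~\ref{th.SurjectivityNonUniformAsymp}.(i) applied to $\M^q$ and the non-integer $\beta$ gives $\ga(\M^q)>\lfloor\beta\rfloor=p-1$; since $\ga(\M^q)=q\,\ga(\M)$, this reads $\ga(\M)>(p-1)/q=r-1/q$. Finally, letting $r=r_n=p_n/q_n<\xi$ run over (for instance) the even convergents of the continued fraction expansion of $\xi$, so that $q_n\to\infty$ and $r_n\to\xi$, we get $\ga(\M)>r_n-1/q_n$ for all large $n$, and passing to the limit $\ga(\M)\ge\xi$, contradicting $\xi>\ga(\M)$. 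Hence $t\le\ga(\M)$, i.e.\ (vi) holds.

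Combining everything, (i)$\implies$(ii) yields $(0,\ga(\M))\subseteq S_{\M}$, the inclusions~\eqref{equaContentionSurjectIntervals} yield $S_{\M}\subseteq\widetilde{S}^u_{\M}\subseteq\widetilde{S}_{\M}$, and (iv)$\implies$(vi) yields $\widetilde{S}_{\M}\subseteq(0,\ga(\M)]$, which is the final claim. The hard part will be (v)$\implies$(vi): one must carry the ramification trick of Theorem~\ref{th.SurjStronglyRegGammaMRational} over from the uniform class $\widetilde{\mathcal{A}}^u_{\M}$ to the nonuniform class $\widetilde{\mathcal{A}}_{\M}$ — this works precisely because Theorem~\ref{th.SurjectivityNonUniformAsymp} is available for $\widetilde{\mathcal{A}}_{\M}$ — keeping careful track of the subsector-wise estimates under $w\mapsto w^{1/q}$, and then squeeze the family of bounds $\ga(\M)>r-1/q$, valid for all rationals $r=p/q<\xi$, into the sharp conclusion $\ga(\M)\ge\xi$; this last step is the reason one genuinely needs surjectivity on $S_\xi$ for an irrational $\xi$ arbitrarily close to $t$, rather than just on a fixed rational sector.
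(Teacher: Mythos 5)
Your proposal is correct and follows essentially the same route as the paper: the only step requiring an argument is (v)$\implies$(vi), and both you and the authors settle it by the ramification $w\mapsto w^{1/q}$, which transfers nonuniform surjectivity to the sequence $\M^{q}$, followed by Theorem~\ref{th.SurjectivityNonUniformAsymp}.(i), the identity $\ga(\M^{q})=q\,\ga(\M)$, and letting $q\to\infty$. The paper ramifies directly from $S_{\xi}$ to $S_{\xi q}$, whose opening is automatically non-integer because $\xi$ is irrational, whereas you first pass to rational approximants $r=p/q<\xi$ and must then shave the integer opening $p$ down to a non-integer $\beta$ before invoking the theorem --- a harmless detour that merely yields the slightly weaker bound $\ga(\M)>r-1/q$ in place of $\ga(\M)>\lfloor\xi q\rfloor/q$, which still suffices in the limit.
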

\begin{proof}
Only (v)$\implies$(vi) needs a short proof. For every $q\in\N$ we have that $\zeta=\xi q\notin\N$, we will show that  $\widetilde{\mathcal{B}}:\widetilde{\mathcal{A}}_{{\M}^{q}}(S_{\zeta})\to \C[[z]]_{\M^{q}}$ is surjective so, by  Theorem~\ref{th.SurjectivityNonUniformAsymp}.(i), we see that $\lfloor \zeta \rfloor <\ga(\M^{q})$. Then $\ga(\M)>  \lfloor \xi q \rfloor/q > \xi-1/q$. Since $q$ is arbitrary,  making $q$ tend to $\infty$ we deduce that $\xi\leq\ga(\M)$ for every irrational $\xi<t$, so $t\leq\ga(\M)$.\par

The proof of the surjectivity follows the same ramification argument used in (iv)$\implies$(i) of Theorem~\ref{th.SurjStronglyRegGammaMRational}, where the asymptotic relations obtained for bounded subsectors of $S_\xi$ are transformed into the analogous ones for the corresponding bounded subsectors of $S_\zeta$.
\end{proof}

\begin{rema}\label{rema.conjecture.Borelmap.surjective.SRS}
The situation for strongly regular sequences is summed up in Table~\ref{tableSurjectivityStrongReg}. The conjecture is that, at least for strongly regular sequences, one always has $\widetilde{S}_{\M}=(0,\ga(\M)]$ and $S_{\M}=\widetilde{S}^u_{\M}=(0,\ga(\M))$.
The main difference with the injectivity problem, in which the belonging of the value $\o(\M)$ to the injectivity interval depends on the convergence of a series, might lie in the fact that the value of $\ga(\M)$ completely characterized (snq) condition, that is, $\ga(\M)>0$ if and only if $\M$ has (snq), whereas for $\o(\M)$ we remember that if $\o(\M)>0$ then $\M$ is (nq), but if $\M$ is (nq) then only $\o(\M)\geq 0$ is known.
\end{rema}

\begin{table}[!htb]
\centering
\begin{tabular}{l|l|l|}
\cline{2-3}
 \rule{0pt}{1.1\normalbaselineskip} & $\ga(\M)\in \mathbb{Q}$  &      \multicolumn{1}{|c|}{$\ga(\M)\in\mathbb{I}$}       \\ \hline
\multicolumn{1}{|l}{$S_{\M}$} \rule{0pt}{1.1\normalbaselineskip}  &     \multicolumn{1}{|c|}{ $(0,\ga(\M))$} &            \multicolumn{1}{|c|}{ $(0,\ga(\M))$ or $(0,\ga(\M)]$} \\ \hline
\multicolumn{1}{|l}{$\widetilde{S}^u_{\M}$}  \rule{0pt}{1.1\normalbaselineskip} &     \multicolumn{1}{|c|}{ $(0,\ga(\M))$} &            \multicolumn{1}{|c|}{ $(0,\ga(\M))$ or $(0,\ga(\M)]$} \\  \hline
\multicolumn{1}{|l}{$\widetilde{S}_{\M}$}  \rule{0pt}{1.1\normalbaselineskip} &  \multicolumn{2}{|c|}{ $(0,\ga(\M))$ or $(0,\ga(\M)]$} \\ \hline
\end{tabular}
\caption{Surjectivity intervals for strongly regular sequences}
\label{tableSurjectivityStrongReg}
\end{table}

\begin{rema}\label{rema.interval.without.inject.surject}
A question which was open for some time is: Are $\ga(\M)$ and $\o(\M)$ always equal for strongly regular sequences? After some trial and error, a strongly regular sequence has been constructed with $\displaystyle\ga(\M)= 2<\o(\M) =5/2$ (see Example~2.2.26 in~\cite{PhDJimenez}, also Example 4.18 and Remark 4.19 in~\cite{JimenezSanzSchindlLCSNPO}).
In fact, given any pair of values $0<\ga<\o<\infty$ we are able to provide a strongly regular sequence $\M$ such that $\ga(\M)=\ga$ and $\o(\M)=\o$ (see Remark~2.2.27 in~\cite{PhDJimenez} and Subsection 4.3  in~\cite{JimenezSanzSchindlIndex}). This means that for opening $\a\pi$ with $\a$ in the interval $(\ga,\o)$, the Borel map is neither injective nor surjective and the corresponding injectivity and surjectivity intervals for this sequence are either $[\o,\infty)$ or $(\o,\infty)$ and $(0,\ga)$ or $(0,\ga]$, respectively.\par
\end{rema}

\subsection{Sequences admitting a nonzero proximate order}

In this final subsection, taking into account that the Borel map is never bijective, Theorem~\ref{theoNotBijectivity}, we will deduce more information regarding the surjectivity intervals. In order to be able to infer from that result whether or not $\ga(\M)$ belongs to $S_{\M}$ and $\widetilde{S}^u_{\M}$, strongly regularity is not enough and we need to
assume $\ga(\M)=\o(\M)$. Then,
\begin{itemize}
\item[(i)] If $\sum_{p=0}^{\infty} \left( m_{p}\right)^{-1/\o(\M)}=\infty$, we know that $\widetilde{I}^u_{\M}=I_{\M}=[\omega(\M),\infty)=[\ga(\M),\infty)$, and then
$$ S_{\M}= \widetilde{S}^u_{\M}=(0,\ga(\M)), \qquad (0,\ga(\M))\subseteq \widetilde{S}_{\M} \subseteq (0,\ga(\M)].$$
\item[(ii)] If $\sum_{p=0}^{\infty} \left(m_p\right)^{-1/\o(\M)}<\infty$ and $\sum_{p=0}^{\infty} \left((p+1)m_{p}\right)^{-1/(\o(\M)+1)}=\infty$, we know that $I_{\M}=[\ga(\M),\infty)$ and $\widetilde{I}^u_{\M}=(\ga(\M),\infty)$, and so
$$
S_{\M}= (0,\ga(\M)), \qquad (0,\ga(\M))\subseteq \widetilde{S}^u_{\M} \subseteq \widetilde{S}_{\M} \subseteq (0,\ga(\M)].
$$
\end{itemize}
Hence, the information we have for strongly regular sequences with $\ga(\M)=\o(\M)$  is summarized in the first two rows of Table~\ref{tableSurjectAdmitProxOrder}. Note that for nonuniform asymptotics this assumption does not produce any improvements and we will need to go one step further.\par

Our final result was given by the second author, Theorem 6.1 in \cite{SanzFlatProxOrder}, for strongly regular sequences $\M$ such that the function $d_{\M}$, defined by $d_{\M}(t):=\log(\o_{\M}(t))/\log(t)$, $t$ large enough, is a proximate order. For nonuniform asymptotics, he proved that $\widetilde{S}_{\M}=(0,\ga(\M)]$ employing the truncated Laplace transform technique, where the classical exponential kernel was replaced by a function which is constructed using proximate orders and Maergoiz's functions.
The weight sequences $\M$ for which $d_{\M}$ is a nonzero proximate order have been characterized in \cite[Theorem\ 3.6]{JimenezSanzSchindlLCSNPO}.
However, this property turned out not to be stable under equivalence, what motivated the study of a weaker condition which is indeed stable, as shown by the following statement.

\begin{theo}[\cite{JimenezSanzSchindlLCSNPO}, Theorem~4.14]\label{theoAdmissProxOrder}
 Let $\M$ be a weight sequence. The following
are equivalent:
 \begin{itemize}
\item[(a)] There exists a weight sequence $\L$
 and positive constants $A$ and $B$ such that $A^p L_p\leq M_p \leq B^p L_p$ and $d_{\L}(t)$ is a nonzero proximate order.
\item[(b)] $\M$ \emph{admits a nonzero proximate order} $\ro(t)$, i.e.,
there exist a nonzero proximate order $\ro(t)$ and constants $C$ and $D$ such that
$$ C\leq \log(t)\left(d_{\M}(t)- \ro(t)\right) \leq D, \qquad t\textrm{ large enough}.$$
\end{itemize}
\end{theo}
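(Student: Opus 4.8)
The plan is to recast the statement through the associated function $\o_{\M}$ and prove the two implications separately. For (a)$\Rightarrow$(b): assume $\M\approx\L$, say $A^pL_p\le M_p\le B^pL_p$. Since $\o_{\M}(t)=-\log(h_{\M}(1/t))$ (and likewise for $\L$), the comparison of the functions $h$ recorded right after the definition of $\approx$ gives at once
\[
\o_{\L}(t/B)\le\o_{\M}(t)\le\o_{\L}(t/A),\qquad t>0 .
\]
As $d_{\L}$ is a nonzero proximate order, $\o_{\L}(t)=t^{d_{\L}(t)}$ for $t$ large, and the two defining conditions $d_{\L}(t)\to\ro:=\lim d_{\L}>0$ and $t\,d_{\L}'(t)\log t\to0$ force $\o_{\L}$ to be regularly varying of index $\ro$, so $\o_{\L}(\lambda t)/\o_{\L}(t)\to\lambda^{\ro}$ for each fixed $\lambda>0$. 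Taking logarithms above and using this with $\lambda=1/B$ and $\lambda=1/A$ shows that $\log\o_{\M}(t)-\log\o_{\L}(t)$ stays bounded for $t$ large, i.e. $C\le\log(t)(d_{\M}(t)-d_{\L}(t))\le D$ for suitable $C,D$; so (b) holds with $\ro(t):=d_{\L}(t)$.

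For (b)$\Rightarrow$(a) the plan is to build $\L$ from a Maergoiz function. Given a nonzero proximate order $\ro(t)$ with $\lim\ro(t)=\ro>0$ and $e^{C}\le\o_{\M}(t)/t^{\ro(t)}\le e^{D}$ for $t$ large, I would pick $V\in MF(1,\ro(t))$ as in Theorem~\ref{propanalproxorde}: on $(0,\infty)$ it is positive, strictly increasing, tends to $0$ at $0$, satisfies $V(t)\sim t^{\ro(t)}$, the map $u\mapsto\psi(u):=V(e^{u})$ is strictly convex, and $d_V(t):=\log(V(t))/\log(t)$ is a nonzero proximate order; hence $\o_{\M}(t)\asymp V(t)$ for $t$ large. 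I would then define $\L$ by
\[
\log L_p:=\sup_{t>0}\bigl(p\log t-V(t)\bigr)=\psi^{*}(p),\qquad p\in\N_0 ,
\]
which is finite (as $V(t)\to\infty$), has $\log L_0=-\inf V=0$, and is convex in $p$, so $\L$ is (lc). By biconjugation $\o_{\L}(t)=\sup_{p\in\N_0}(p\log t-\log L_p)\le\psi^{**}(\log t)=V(t)$, and the gap between this supremum over $\N_0$ and the one over $\R$ (a Bregman-type remainder of size $\lesssim 1/\psi''$, which stays bounded by the growth of $V$) gives $V(t)-\o_{\L}(t)=O(1)$, so $\o_{\L}(t)\sim V(t)$; in particular $\o_{\L}$ is finite-valued, hence $\L$ is a weight sequence.

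Two checks would then remain. First, $\M\approx\L$: from $\o_{\L}(t)\sim V(t)\sim t^{\ro(t)}$, the hypothesis $e^{C}\le\o_{\M}(t)/t^{\ro(t)}\le e^{D}$, and the regular variation of $\o_{\L}$, one picks $A$ small and $B$ large with $\o_{\L}(t/B)\le\o_{\M}(t)\le\o_{\L}(t/A)$ for all $t>0$ (near $0$ both functions vanish, and the compact range is absorbed by shrinking $A$ and enlarging $B$); then the biconjugation identity $\log M_p=\sup_{t>0}(p\log t-\o_{\M}(t))$, valid for log-convex $\M$, and its analogue for $\L$ turn this into $A^{p}L_p\le M_p\le B^{p}L_p$. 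Second, $d_{\L}$ is a nonzero proximate order: $\o_{\L}(e^{u})$ is piecewise linear with slope $p$ on the interval on which it equals $pu-\log L_p$, and there $\psi'(u)=p+O(1)$ (its breakpoints are the numbers $(\psi')^{-1}(\theta_p)$ with $\theta_p\in(p,p+1)$, by the mean value theorem applied to $\psi^{*}$). A direct differentiation gives, on each such piece,
\[
t\,d_{\L}'(t)\log t=\frac{p}{\o_{\L}(e^{u})}-d_{\L}(e^{u}),
\]
and since $\o_{\L}(e^{u})\sim\psi(u)$, $p=\psi'(u)+O(1)$, $d_{\L}(e^{u})=d_V(e^{u})+o(1/u)\to\ro$, and $\psi'(u)/\psi(u)=\tfrac{d}{du}\log V(e^{u})=u\,e^{u}d_V'(e^{u})+d_V(e^{u})\to\ro$ (because $d_V$ is a proximate order), the right-hand side tends to $0$; together with continuity, piecewise smoothness, nonnegativity for large $t$, and $d_{\L}\to\ro>0$, this makes $d_{\L}$ a nonzero proximate order, and (a) follows.

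The routine ingredients are the conjugate duality between log-convex sequences and their associated functions and elementary regular-variation estimates. The genuine obstacle is the discretization step $\o_{\L}(t)=V(t)+O(1)$: one must extract from Maergoiz's construction enough regularity and growth of $\psi=V(e^{\cdot})$ — essentially that $\psi''$ is bounded below for large arguments, or even $\psi''\to\infty$ — to control the passage from the continuous to the sequential conjugate, and then carry the identification ``slope $p$ on the $p$-th piece $\leftrightarrow$ $\psi'(u)$'' precisely enough that the proximate-order condition for $d_{\L}$ collapses onto the one already granted for $d_V$.
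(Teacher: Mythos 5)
First, a point of comparison: the paper does not prove this statement at all --- it is imported verbatim as Theorem~4.14 of \cite{JimenezSanzSchindlLCSNPO} --- so there is no in-paper argument to measure you against; your proposal has to stand on its own. Your direction (a)$\Rightarrow$(b) does stand: the inequalities $\o_{\L}(t/B)\le\o_{\M}(t)\le\o_{\L}(t/A)$ follow correctly from the comparison of the $h$-functions, and the classical fact that $t^{d_{\L}(t)}=\o_{\L}(t)$ is regularly varying of index $\ro$ (Valiron's representation, using $\lim t\,d_{\L}'(t)\log t=0$) turns the rescaling of the argument into an additive $O(1)$ error in $\log\o_{\L}$, which is exactly condition (b) with $\ro(t)=d_{\L}(t)$. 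The architecture of (b)$\Rightarrow$(a) --- take $V\in MF(1,\ro(t))$, set $\log L_p=\psi^{*}(p)$ with $\psi(u)=V(e^{u})$, recover $\M\approx\L$ from the biconjugation identity $\log M_p=\sup_{t>0}(p\log t-\o_{\M}(t))$, and verify the fourth proximate-order condition for $d_{\L}$ via $t\,d_{\L}'(t)\log t=p/\o_{\L}(t)-d_{\L}(t)$ on the linear pieces --- is also sound, and your identification of the active index $p$ with $\psi'(u)+O(1)$ is legitimate because the integer maximizer of the concave function $x\mapsto xu-\psi^{*}(x)$ lies within distance $1$ of the continuous maximizer $\psi'(u)$.

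The genuine gap is the one you name yourself: the discretization estimate $V(t)-\o_{\L}(t)=O(1)$. Your proposed route to close it --- a lower bound on $\psi''$, or $\psi''\to\infty$, extracted ``from Maergoiz's construction'' --- is not available: Theorem~\ref{propanalproxorde} delivers strict convexity of $u\mapsto V(e^{u})$ and nothing quantitative about second derivatives, so as written the proof does not compile at this point. The good news is that no second-derivative information is needed. Rounding the continuous maximizer $x_0=\psi'(u)$ down to $p=\lfloor x_0\rfloor$ costs at most $(\psi^{*})'(x_0)-(\psi^{*})'(x_0-1)$, i.e.\ the length of the interval $[u',u]$ on which $\psi'$ increases by $1$; and convexity gives
\[
\psi'(u)-\psi'(u-c)\;\ge\;\frac{1}{c}\bigl(\psi(u)-2\psi(u-c)+\psi(u-2c)\bigr)
=\frac{\psi(u)}{c}\bigl((1-e^{-\ro c})^{2}+o(1)\bigr)\longrightarrow\infty
\]
for every fixed $c>0$, by the regular variation of $V$ (property (i) of Theorem~\ref{propanalproxorde}). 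Hence $u-u'\le c$ for $u$ large, the rounding loss is bounded, and $\o_{\L}=V+O(1)$ follows from first differences alone. With that repair, and noting that $p/\o_{\L}(t)\to\ro$ then follows from $\psi'(u)/\psi(u)=d_V(e^{u})+u e^{u}d_V'(e^{u})\to\ro$ exactly as you indicate, the argument is complete.
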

\begin{rema}\label{remacomentdderordenaprox}
\begin{itemize}
\item[(i)] The functions $\rho_{\a,\b}$ in the Example~\ref{examProxOrders} are admissible for the corresponding sequences $\M_{\a,\b}$ in the Example~\ref{exampleSequences}. This is useful even if, as it happens in this case, the functions $d_{\a,\b}(t):=\log(\o_{\M_{\a,\b}}(t))/\log(t)$ already are proximate orders, since $\rho_{\a,\b}$ are easier to handle and enjoy better regularity properties.
\item[(ii)] In the Gevrey case in particular, i.~e. for $\M_{\a}=(p!^{\a})_{\in\N_0}$,  the constant proximate order $\ro(r)\equiv 1/\a$ is admissible, and any $V\in MF(2\a,\ro(r))$ will provide us, by Theorem~\ref{teorconstrfuncplana}, with a flat function in the class $\widetilde{\mathcal{A}}_{\M_\a}(S_{\a})$. Since the choice $V(z)=z^{1/\a}$ is possible, we obtain the classical flat function $G(z)=\exp(-z^{-1/\a})$.
\end{itemize}
\end{rema}

As it is deduced from~\cite[Remark 4.11.(iii)]{SanzFlatProxOrder}, the construction in~\cite[Theorem\ 6.1]{SanzFlatProxOrder} is also available whenever  $\M$ is a weight sequence admitting a nonzero proximate order.  We recall that if $\M$ admits a nonzero proximate order then it is strongly regular and $\ga(\M)=\o(\M)\in(0,\infty)$ (see~\cite[Remark~4.15]{JimenezSanzSchindlLCSNPO}) but the converse does not hold~\cite[Example~4.16]{JimenezSanzSchindlLCSNPO}, so this is the most regular situation we will consider.

\begin{theo}[Generalized Borel--Ritt--Gevrey theorem]\label{th.BorelRittGevreySanz}
Let $\M$ be a weight sequence admitting a nonzero proximate order and $\ga>0$ be given. The following statements are equivalent:
\begin{itemize}
\item[(i)] $\gamma\le\omega(\M)=\ga(\M)$,
\item[(ii)] For every $\widehat{f}=\sum_{p\in\N_0} a_p z^p\in\C[[z]]_{\M}$ there exists a function $f\in\widetilde{\mathcal{A}}_{\bM}(S_{\gamma})$ such that $$f\sim_{\M}\widehat{f},$$
    i.e., $\widetilde{\mathcal{B}}(f)=\widehat{f}$. In other words, the Borel map $\widetilde{\mathcal{B}}:\widetilde{\mathcal{A}}_{\M}(S_{\gamma})\longrightarrow \C[[z]]_{\M}$ is surjective.
\end{itemize}
Hence, $\widetilde{S}_{\M}=(0,\ga(\M)]=(0,\o(\M)]$.
\end{theo}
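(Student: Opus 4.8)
The plan is to establish the two implications separately; the genuine content sits in (i)$\Rightarrow$(ii), and it amounts to recovering the construction behind \cite[Theorem~6.1]{SanzFlatProxOrder}. The implication (ii)$\Rightarrow$(i) is immediate: a weight sequence admitting a nonzero proximate order is strongly regular with $\ga(\M)=\o(\M)\in(0,\infty)$ (recalled just before the statement), so $\o(\M)<\infty$, and the Remark following Theorem~\ref{theoNotBijectivity} already gives $\widetilde{S}_{\M}\subseteq(0,\o(\M)]$; hence surjectivity for the opening $\pi\ga$ forces $\ga\le\o(\M)=\ga(\M)$. For (i)$\Rightarrow$(ii), since $0<\ga'<\ga$ and $\ga\in\widetilde{S}_{\M}$ imply $\ga'\in\widetilde{S}_{\M}$, by restriction it suffices to treat the critical opening $\ga=\o:=\o(\M)$, i.e.\ to show that every $\widehat{f}=\sum_{p}a_pz^p\in\C[[z]]_{\M}$ is the $\M$-asymptotic expansion of some $f\in\widetilde{\mathcal{A}}_{\M}(S_{\o})$. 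The quickest way to formalize this is to use that $\widetilde{\mathcal{A}}_{\M}(S_{\ga})$, $\C[[z]]_{\M}$ and the indices $\o,\ga$ all depend only on the equivalence class of $\M$ (Remarks~\ref{rema.alg.prop.ultra.class} and~\ref{rema.image.Borel.map}, plus the obvious equivalence-invariance of $\o(\M)=\liminf_p\log m_p/\log p$), so that one may replace $\M$ by the equivalent sequence $\L$ of Theorem~\ref{theoAdmissProxOrder} for which $d_{\L}$ is a genuine nonzero proximate order and invoke \cite[Theorem~6.1]{SanzFlatProxOrder} verbatim; I nonetheless sketch the construction itself below.

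First I would use Theorem~\ref{theoAdmissProxOrder} to fix a nonzero proximate order $\ro(t)$ with $\lim_{t\to\infty}\ro(t)=1/\o$ and constants $A_1,A_2>0$ such that $A_1t^{\ro(t)}\le\o_{\M}(t)\le A_2t^{\ro(t)}$ for large $t$; in contrast to the single flat function built in Theorem~\ref{teorconstrfuncplana}, \emph{both} bounds are genuinely needed here. Then I would pick $V\in MF(2\o,\ro(t))$, so that property $(vi)$ of Theorem~\ref{propanalproxorde} yields $c_1\o_{\M}(t)\le V(t)\le c_2\o_{\M}(t)$ for large $t$, and, following \cite{SanzFlatProxOrder} (see also \cite{lastramaleksanz}), build from $V$ a kernel $e_{\M}$ holomorphic on $S_{\o}$ which is flat at $0$ with $|e_{\M}(z)|\le Kh_{\M}(c|z|)$ on bounded subsectors (as in the proof of Theorem~\ref{teorconstrfuncplana}), decays exponentially (of order $1/\o$) as $z\to\infty$ inside $S_{\o}$, and whose moment sequence $m(p):=\int_{0}^{\infty}t^{p-1}e_{\M}(t)\,dt$ satisfies $a^{p}M_{p}\le m(p)\le b^{p}M_{p}$ — here (mg) of $\M$ is what allows one to pass between powers of $h_{\M}$ and $\M$ itself.

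With the kernel in hand, I would form the formal Borel transform $\widehat{g}:=\sum_{p}(a_p/m(p))z^p$, which has positive radius of convergence by the lower moment bound and hence defines a holomorphic germ $g$ on some disc $\{|z|<\ro_0\}$, fix $\delta\in(0,\ro_0)$, and set
$$f(z):=\int_{0}^{\delta}e_{\M}(u/z)\,g(u)\,\frac{du}{u},\qquad z\in S_{\o},$$
rotating the ray of integration as $z$ approaches the boundary of $S_{\o}$. Expanding $g$ in Taylor series, integrating term by term and substituting $u=zv$, the moments $m(p)$ cancel the denominators of $\widehat{g}$ and produce the partial sum $\sum_{k<p}a_kz^k$; the discrepancy splits into a truncation tail involving $\int_{\delta/|z|}^{\infty}|e_{\M}(v)|v^{k-1}\,dv$, which is flat in $z$ thanks to the order-$1/\o$ decay of $e_{\M}$ at infinity and is absorbed into a bound $\lesssim (A b\tilde c)^pM_p|z|^p$, and a Taylor remainder $\int_{0}^{\delta}|e_{\M}(u/z)|\,|g(u)-\sum_{k<p}(a_k/m(k))u^k|\,|u|^{-1}du$, controlled by the Cauchy estimate for $g$ and the upper moment bound by a constant times $(Ab)^{p}M_{p}|z|^{p}$. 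Carrying this out on every $T\ll S_{\o}$ shows $f\in\widetilde{\mathcal{A}}_{\M}(S_{\o})$ with $\widetilde{\mathcal{B}}(f)=\widehat{f}$, which proves (ii) for $\ga=\o$, and restriction to $S_{\ga}$ covers all $0<\ga\le\o$. Finally, combining (i)$\Leftrightarrow$(ii) with the inclusion $\widetilde{S}_{\M}\subseteq(0,\o(\M)]$ from the first paragraph gives $\widetilde{S}_{\M}=(0,\ga(\M)]=(0,\o(\M)]$.

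The main obstacle, in either presentation, is internal to the cited construction: producing a kernel $e_{\M}$ that is holomorphic and has the right exponential decay at \emph{both} ends of the wide sector $S_{\o}$ while having moments two-sidedly comparable to $\M$. This is precisely the place where admitting a nonzero proximate order — rather than mere strong regularity — is essential, since it supplies the two-sided asymptotics $\o_{\M}(t)\asymp t^{\ro(t)}\asymp V(t)$, and the regular variation of $V$ (Theorem~\ref{propanalproxorde}, Proposition~\ref{propcotaVpartereal}) is the tool that converts these into the required estimates for $m(p)$ and for the remainder integrals, uniformly on proper subsectors of $S_{\o}$.
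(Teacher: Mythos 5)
Your proposal is correct and follows essentially the same route as the paper, which itself gives no written proof but justifies (i)$\Rightarrow$(ii) exactly as you do: via Theorem~\ref{theoAdmissProxOrder} the admissibility hypothesis reduces matters (up to equivalence of sequences, which leaves the classes, $\C[[z]]_{\M}$ and the indices unchanged) to the case where $d_{\M}$ is a nonzero proximate order, so that the truncated-Laplace-transform construction of \cite[Theorem~6.1]{SanzFlatProxOrder} applies, while (ii)$\Rightarrow$(i) follows from $\widetilde{S}_{\M}\subseteq(0,\o(\M)]$ together with $\ga(\M)=\o(\M)$. Only a cosmetic point in your supplementary sketch: the kernel $e_{\M}$ is not flat at the origin (there it is only mildly vanishing, enough to make $e_{\M}(t)/t$ integrable); the $h_{\M}$-type, order-$1/\o$ decay you correctly use for the tail estimates occurs as $z\to\infty$ in $S_{\o}$, the flatness at $0$ belonging to the associated function $G(z)=e_{\M}(1/z)/\,(1/z)$ of Theorem~\ref{teorconstrfuncplana}.
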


 Table~\ref{tableSurjectAdmitProxOrder} gathers the information about surjectivity in case $\M$ admits a nonzero proximate order. For the sequence $\M_{\a,\b}=\big(p!^{\a}\prod_{m=0}^p\log^{\b}(e+m)\big)_{p\in\N_0}$, $\a>0$, $\b\in\R$, the information is summarized in Table~\ref{tableSurjecMAB}, note that the Gevrey case always belongs to the first column.

\begin{table}[h]
\centering
\begin{tabular}{l|l|l|l|l|}

\cline{2-5}
\rule{0pt}{1.05\normalbaselineskip} &     & \multicolumn{3}{|c|}{ $\ga(\M)\in\mathbb{I}$}   \\ \cline{3-5}
\rule{0pt}{1.3 \normalbaselineskip} & $\ga(\M)\in\mathbb{Q}$ & \scalebox{0.8}{$\displaystyle\sum_{p=0}^{\infty} \left(\frac{1}{m_{p}}\right)^{\frac{1}{\o(\M)}}=\infty$}
                       & \scalebox{0.8}{$\displaystyle\sum_{p=0}^{\infty} \left(\frac{1}{(p+1)m_{p}}\right)^{\frac{1}{\o(\M)+1}}=\infty$}
                       &  \multicolumn{1}{|l|}{\scalebox{0.8}{$\displaystyle\sum_{p=0}^{\infty} \left(\frac{1}{(p+1)m_{p}}\right)^{\frac{1}{\o(\M)+1}}<\infty$}} \\ \hline
	\multicolumn{1}{|l}{$S_{\M}$}   \rule{0pt}{1.1\normalbaselineskip}      &    \multicolumn{2}{|c}{$(0,\ga(\M))$}                   &    &                  \\ \cline{1-1}\cline{4-4}
   \multicolumn{1}{|l}{$\widetilde{S}^u_{\M}$} \rule{0pt}{1.1\normalbaselineskip}  &  \multicolumn{2}{|c|}{}                      & \multicolumn{2}{|c|}{ $(0,\ga(\M))$  or   $(0,\ga(\M)]$}                   \\ \hline
   \multicolumn{1}{|l}{$\widetilde{S}_{\M}$}   \rule{0pt}{1.1\normalbaselineskip}   &   \multicolumn{4}{|c|}{$(0,\ga(\M)]$}  \\ \hline
\end{tabular}
\caption{Surjectivity intervals for weight sequences admitting a nonzero proximate order.}
\label{tableSurjectAdmitProxOrder}
\end{table}

\begin{table}[h]
\centering

\begin{tabular}{l|l|l|l|}
\cline{2-4}

               & $\b\leq\a$
               & $ \a<\b\leq\a+1$
               & $\b>\a+1$ \\ \hline
	 \multicolumn{1}{|l|}{$S_{\M_{\a,\b}}$}  & $(0,\a)$                        & {$(0,\a)$}                   & {$(0,\a)$ or $(0,\a]$}                  \rule{0pt}{1.1\normalbaselineskip}         \\ \hline
  \multicolumn{1}{|l|}{$\widetilde{S}^u_{\M_{\a,\b}}$}& $(0,\a)$                       & {$(0,\a)$ or $(0,\a]$}    & {$(0,\a)$ or $(0,\a]$}          \rule{0pt}{1.1\normalbaselineskip}               \\ \hline
  \multicolumn{1}{|l|}{$\widetilde{S}_{\M_{\a,\b}}$}  &  $(0,\a]$ &  $(0,\a]$ &   $(0,\a]$ \rule{0pt}{1.1\normalbaselineskip}\\ \hline
\end{tabular}
\caption{Surjectivity intervals for  the sequences $\M_{\a,\b}$, $\a>0$, $\b\in\R$.}
\label{tableSurjecMAB}
\end{table}

\noindent\textbf{Acknowledgements}: The first two authors are partially supported by the Spanish Ministry of Economy, Industry and Competitiveness under the project MTM2016-77642-C2-1-P. The first author is partially supported by the University of Valladolid through a Predoctoral Fellowship (2013 call) co-sponsored by the Banco de Santander. The third author is supported by FWF-Project J~3948-N35, as a part of which he is an external researcher at the Universidad de Valladolid (Spain) for the period October 2016 - September 2018.\par

\vskip.5cm

\noindent\textbf{Affiliation}:\\
J.~Jim\'{e}nez-Garrido, J.~Sanz:\\
Departamento de \'Algebra, An\'alisis Matem\'atico, Geometr{\'\i}a y Topolog{\'\i}a\\
Universidad de Va\-lla\-do\-lid\\
Facultad de Ciencias, Paseo de Bel\'en 7, 47011 Valladolid, Spain.\\
Instituto de Investigaci\'on en Matem\'aticas IMUVA\\
E-mails: jjjimenez@am.uva.es (J.~Jim\'{e}nez-Garrido), jsanzg@am.uva.es (J. Sanz).
\\
\vskip.5cm
\noindent G.~Schindl:\\
Departamento de \'Algebra, An\'alisis Matem\'atico, Geometr{\'\i}a y Topolog{\'\i}a\\
Universidad de Va\-lla\-do\-lid\\
Facultad de Ciencias, Paseo de Bel\'en 7, 47011 Valladolid, Spain.\\
E-mail: gerhard.schindl@univie.ac.at.

\begin{thebibliography}{99}

\bibitem{balserutx} W. Balser, \textit{Formal power series and linear systems of meromorphic ordinary differential equations}, Springer, Berlin, 2000.


\bibitem{BariSteckin} K. N.~Bari, S. B.~Ste\v{c}kin, Best approximations and differential properties of two conjugate functions, (Russian) Trudy Moskov. Mat. Ob\v{s}\v{c}. {\bf 5} (1956), 483--522.

\bibitem{chaucho} J. Chaumat, A. M. Chollet, Surjectivit\'e de l'application restriction \`a un compact dans des classes de fonctions ultradiff\'erentiables, Math. Ann. 298 (1994), no. 1, 7--40.

\bibitem{galindosanz} F. Galindo, J. Sanz, On strongly asymptotically developable functions and the Borel-Ritt theorem, Studia Math. 133 (3) (1999), 231--248.

\bibitem{GoldbergOstrowskii} A. A. Goldberg, I. V. Ostrovskii, \textit{Value distribution of meromorphic functions},  Transl. Math. Monogr. 236, Amer. Math. Soc., Providence, RI, 2008.

\bibitem{holland} A. S. B. Holland, \textit{Introduction to the theory of entire functions}, Academic Press, New York and London, 1973.

\bibitem{Hormander1990} L. H\"ormander, \textit{The analysis of linear partial differential operators I. Distribution theory and Fourier analysis}, Second edition. Springer Study Edition. Springer-Verlag, Berlin, 1990.

\bibitem{PhDJimenez} J. Jim\'enez-Garrido, \emph{Applications of regular variation and proximate orders to ultraholomorphic classes, asymptotic expansions and multisummability}, PhD dissertation, University of Valladolid, 2018.

\bibitem{JimenezSanzSRSPO} J. Jim\'enez-Garrido, J. Sanz, Strongly regular sequences and
proximate orders, J. Math. Anal. Appl. 438 (2016), no. 2, 920--945.

\bibitem{JimenezSanzSchindlLCSNPO} J. Jim\'enez-Garrido, J. Sanz, G. Schindl, Log-convex sequences and
nonzero proximate orders, J. Math. Anal. Appl. 448 (2017), no. 2,  1572--1599.

\bibitem{JimenezSanzSchindlIndex} J. Jim\'enez-Garrido, J. Sanz, G. Schindl, Indices of O-regular variation for weight functions and weight sequences, in preparation.

\bibitem{komatsu} H. Komatsu, Ultradistributions, I: Structure theorems and a characterization, J. Fac. Sci. Univ. Tokyo Sect. IA Math.  20 (1973), 25--105.


\bibitem{korenbljum} B. I. Korenbljum, Conditions of nontriviality of certain classes of functions analytic in a sector, and problems of quasianalyticity, Soviet Math. Dokl.  7 (1966), 232--236.


\bibitem{lastramaleksanz} A. Lastra, S. Malek, J. Sanz, Continuous right inverses for the asymptotic Borel map in ultraholomorphic classes via a Laplace-type transform, J. Math. Anal. Appl. 396 (2012), 724--740.

\bibitem{lastramaleksanz2} A. Lastra, S. Malek, J. Sanz, Summability in general Carleman ultraholomorphic classes, J. Math. Anal. Appl. 430 (2015), 1175--1206.

\bibitem{Maergoiz} L. S. Maergoiz, Indicator diagram and generalized Borel-Laplace transforms for entire functions of a given proximate order, St. Petersburg Math. J. 12 (2001), no. 2, 191--232.

\bibitem{Maillet} E. Maillet, Sur les s\'eries divergentes et les \'equations diff\'erentielles, Ann. \'Ec. Norm. Sup. Paris, S\'er. 3, 20 (1903), 487--518.

\bibitem{Mandelbrojt} S. Mandelbrojt, \textit{S\'eries adh\'erentes, r\'egularisation des suites, applications}, Collection de monographies sur la th\'eorie des fonctions, Gauthier-Villars, Paris, 1952.

\bibitem{Matsumoto} W. Matsumoto, Characterization of the separativity of ultradifferentiable classes, J. Math. Kyoto Univ. 24, no. 4 (1984), 667--678.

\bibitem{Pet} H.-J. Petzsche, On E. Borel's theorem, Math. Ann. 282 (1988), no. 2, 299--313.

\bibitem{PetzscheVogt} H.-J. Petzsche, D.Vogt, Almost analytic extension of ultradifferentiable
functions and the boundary values of holomorphic functions, Math. Ann.  267 (1984), 17--35.

\bibitem{Ramis1} J. P. Ramis,
D\'evissage Gevrey, Asterisque 59--60 (1978), 173--204.

\bibitem{Ramis2} J. P. Ramis,
{\it Les s\'eries k-sommables et leurs applications}, Lecture Notes in
Phys. 126, Springer-Verlag, Berlin, 1980.

\bibitem{Salinas} B.~R.~Salinas, Funciones con momentos nulos, Rev. Acad. Ci. Madrid 49 (1955), 331--368.

\bibitem{SanzFlatProxOrder} J. Sanz, Flat functions in Carleman ultraholomorphic classes via proximate orders,
J. Math. Anal. Appl. 415 (2014), 623--643.

\bibitem{SanzLectureNotesBedlewo} J. Sanz, Asymptotic analysis and summability of formal power series, in: \textit{Analytic, Algebraic and Geometric Aspects of Differential Equations}, ed. G. Filipuk, Y. Haraoka, S. Michalik, Trends in Mathematics,
    Birkh\"auser, Basel, 2017.

\bibitem{SchmetsValdivia} J. Schmets, M. Valdivia, Extension maps in ultradifferentiable and ultraholomorphic function spaces, Studia Math. 143 (3) (2000), 221--250.

\bibitem{Thilliez1} V. Thilliez,
Extension Gevrey et rigidit\'e dans un secteur, Studia Math.
 117 (1995), 29--41.

\bibitem{thilliez} V. Thilliez, Division by flat ultradifferentiable functions and sectorial extensions, Results Math. 44 (2003), 169--188.

\bibitem{Thilliez2} V. Thilliez, Smooth solutions of quasianalytic or ultraholomorphic equations, Monatsh. Math. 160, no. 4 (2010), 443--453.

\bibitem{Tikhonov} S. Tikhonov, On generalized Lipschitz classes and Fourier series, J. Analysis Appl. 23 (2004), 745--764.

\bibitem{TrunovYulmukhametov} K. V. Trunov, R. S. Yulmukhametov, Quasianalytic Carleman classes on bounded domains, St. Petersburg Math. J. 20 (2009), no. 2, 289--317.

\bibitem{Valiron} G. Valiron, \textit{Th\'eorie des Fonctions}, Masson et Cie., Paris, 1942.

\bibitem{Yulmukhametov} R. S. Yulmukhametov, Quasianalytical classes of functions in convex domains, Math. USSR-Sb. 58 (1987), no. 2, 505--523.

\end{thebibliography}
\end{document}